\theoremstyle{plain}
\newtheorem{thm}{Theorem}[section]
\newtheorem{prop}[thm]{Proposition}
\newtheorem{lemma}[thm]{Lemma}
\newtheorem{cor}[thm]{Corollary}
\theoremstyle{definition}\newtheorem{Def}[thm]{Definition}
\theoremstyle{remark}\newtheorem{rmk}[thm]{Remark}
\newtheorem{ex}[thm]{Example}
\numberwithin{equation}{section}
\newcommand*{\N}{\ensuremath{\mathbf N}}
\newcommand*{\Z}{\ensuremath{\mathbf Z}}
\newcommand*{\Q}{\ensuremath{\mathbf Q}}
\newcommand*{\R}{\ensuremath{\mathbf R}}
\newcommand*{\C}{\ensuremath{\mathbf C}}
\newcommand*{\spin}{\ensuremath{\mathrm{Spin}^c}}
\DeclareMathOperator{\bip}{Bip}
\DeclareMathOperator{\conv}{Conv}
\DeclareMathOperator{\id}{id}
\begin{document}

\title[Root polytopes, Tutte polynomials, and duality]{Root polytopes, Tutte polynomials, and a duality theorem for bipartite graphs}

\author{Tam\'as K\'alm\'an}

\address{Tokyo Institute of Technology}
\email{kalman@math.titech.ac.jp}
\urladdr{www.math.titech.ac.jp/\char126 kalman}

\author{Alexander Postnikov}

\address{Massachusetts Institute of Technology}
\email{apost@math.mit.edu}

\date{}

\keywords{}

\thanks{The first author was supported by consecutive Japan Society for the Promotion of Science (JSPS) Grant-in-Aids for Young Scientists B (nos.\ 21740041 and 25800037).}

\begin{abstract}
Let $G$ be a connected bipartite graph with color classes $E$ and $V$ and root polytope $Q$. Regarding the hypergraph $\mathscr H=(V,E)$ induced by $G$, we prove that the interior polynomial of $\mathscr H$ is equivalent to the Ehrhart polynomial of $Q$, which in turn is equivalent to the $h$-vector of any triangulation of $Q$. It follows that the interior polynomials of $\mathscr H$ and its transpose $\overline{\mathscr H}=(E,V)$ agree.

When $G$ is a complete bipartite graph, our result recovers a well known hypergeometric identity due to Saalsch\"utz.
It also implies that certain extremal coefficients in the Homfly polynomial of a special alternating link can be read off of an associated Floer homology group. 
\end{abstract}

\maketitle

\section{Introduction}

The interior polynomial $I$ \cite{hypertutte} is an invariant of hypergraphs (and of integer polymatroids) that generalizes the specialization $T(x,1)$ of the Tutte polynomial of ordinary graphs (matroids). It first arose as a byproduct of the first author's study of polynomial invariants of knots. In that context, it was natural to conjecture that $I_{\mathscr H}=I_{\overline{\mathscr H}}$, where $\overline{\mathscr H}$ is the `abstract dual' (also known as transpose) of the hypergraph $\mathscr H$ resulting from interchanging the roles of its vertices and hyperedges. That is, $I$ is an invariant of the bipartite graph $G$ that captures the common structure of $\mathscr H$ and $\overline{\mathscr H}$.

In this paper we verify that conjecture. As an intermediary between the two polynomials, we insert the Ehrhart polynomial $\varepsilon_G$ of the root polytope~$Q_G$, which was introduced and studied by the second author \cite{alex}. (For the definition of $Q_G$ see Section \ref{sec:root}.) We also show that $\varepsilon_G$ has an equivalent description as the $h$-vector 
of an arbitrary triangulation of $Q_G$. (In particular, all triangulations of $Q_G$ have the same $h$-vector $h_G$.) Explicitly, we will prove the following.

\begin{thm}\label{thm:main}
If $E$ and $V$ are the color classes of the connected bipartite graph~$G$, then the interior polynomial~$I$ of either hypergraph induced by $G$ is related to $h_G$ via the formula
\begin{equation}\label{eq:I=h}
I
(x)=
x^{|E|+|V|-1}h_G(x^{-1}).
\end{equation}
\end{thm}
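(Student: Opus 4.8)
The plan is to trade \eqref{eq:I=h} for a single Ehrhart‐theoretic identity and to prove that by a half‑open decomposition of $Q_G$ indexed by hypertrees. Since the incidence matrix of the bipartite graph $G$ is totally unimodular, every simplex spanned by vertices of $Q_G$ (equivalently, by a forest of $G$) is unimodular; hence every triangulation of $Q_G$ using only its vertices is unimodular, all such triangulations share one $h$‑vector, and that $h$‑vector is the Ehrhart $h^{*}$‑vector of $Q_G$. This is the ``$\varepsilon_G\equiv h_G$'' part of the picture, and it also settles the normalization in \eqref{eq:I=h}: a triangulation's $h$‑vector has $d+2$ entries $h_0,\dots,h_{d+1}$ with $d=|E|+|V|-2$, its top entry $h_{d+1}$ vanishes (a triangulated ball has Euler characteristic $1$), and \eqref{eq:I=h} merely reverses the surviving entries. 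Writing $\delta_{Q_G}(x)=\sum_{i\ge 0}\delta_i x^{i}$ for the $h^{*}$‑polynomial, the theorem is thus equivalent to $I=\delta_{Q_G}$; and since $\sum_{t\ge 0}\varepsilon_G(t)x^{t}=\delta_{Q_G}(x)/(1-x)^{d+1}$, it is equivalent to the enumerative statement
\[
\varepsilon_G(t)=\sum_{\mathbf f}\binom{t+d-\bar\iota(\mathbf f)}{d},\qquad d=|E|+|V|-2,
\]
the sum over the hypertrees $\mathbf f$ of $\mathscr H$, with $\bar\iota$ the interior degree from the definition of $I$.

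To prove this identity I would build a shellable triangulation $\mathcal T$ of $Q_G$, turn it into a half‑open decomposition, and count lattice points. The relevant dictionary is: a maximal cell of any triangulation of $Q_G$ is a spanning tree $T$ of $G$ (it has $d+1$ edges, giving a unimodular $d$‑simplex), and reading off its degree vector on the $E$‑coordinates produces a hypertree $\mathbf f_T$ of $\mathscr H$. Fix a linear order on the edge set of $G$; this costs nothing, since $\bar\iota$, and hence $I$, is independent of it. From the order one gets, on the polytope side, a triangulation $\mathcal T$ with a shelling (for instance by pulling the vertices of $Q_G$ in that order), and on the combinatorial side exactly the data underlying $\bar\iota$. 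The two points to establish are: (a) the maximal cells of $\mathcal T$ are canonical spanning‑tree representatives $T_{\mathbf f}$, one per hypertree $\mathbf f$; and (b) in the resulting half‑open decomposition $tQ_G=\bigsqcup_{\mathbf f}t T_{\mathbf f}^{\circ}$, exactly $\bar\iota(\mathbf f)$ facets have been removed from $T_{\mathbf f}$. Granting these, a half‑open unimodular $d$‑simplex missing $k$ facets has $\binom{t+d-k}{d}$ lattice points in its $t$‑th dilate, and summing over $\mathbf f$ yields the displayed identity. (Comparing top‑degree terms then also recovers that the normalized volume of $Q_G$ equals the number of hypertrees of $\mathscr H$.)

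I expect step (b) to be the main obstacle. A facet of the simplex $T_{\mathbf f}$ comes from deleting one edge $e'v'$ of the tree $T_{\mathbf f}$, and whether that facet is ``removed'' in the decomposition is dictated by the position of $e'v'$, in the fixed order, among the edges of its fundamental cocycle in $G$ --- a spanning‑tree activity condition. The work is to push this condition, through the correspondence $T_{\mathbf f}\leftrightarrow\mathbf f$ and the ``valid transfer'' moves on hypertrees, onto the internal‑activity condition that defines $\bar\iota$, so that the number of removed facets of $T_{\mathbf f}$ is precisely $\bar\iota(\mathbf f)$, and to check this is consistent with the shelling order. Step (a) is easier but still needs an argument: one must see that the pulling construction genuinely triangulates $Q_G$ (rather than merely dissecting it) and that distinct maximal cells carry distinct hypertrees.

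Two remarks then complete the story. First, $\varepsilon_G$, $\delta_{Q_G}$, and the triangulations of $Q_G$ do not distinguish the color class $E$ from $V$; running the construction with the order placed on $V$ computes $I_{\overline{\mathscr H}}$ combinatorially but the same $\delta_{Q_G}$ geometrically, so $I_{\mathscr H}=I_{\overline{\mathscr H}}$ --- the duality promised in the abstract. Second, for $G=K_{|E|,|V|}$ the root polytope is (up to lattice isomorphism) the product $\Delta_{|E|-1}\times\Delta_{|V|-1}$, with $h^{*}$‑vector $\delta_i=\binom{|E|-1}{i}\binom{|V|-1}{i}$ and $\varepsilon_G(t)=\binom{t+|E|-1}{|E|-1}\binom{t+|V|-1}{|V|-1}$; the identity above, spelled out, is then precisely Saalsch\"utz's evaluation of a terminating ${}_3F_2$.
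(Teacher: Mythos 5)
Your first half is sound and essentially matches the paper: every simplex on vertices of $Q_G$ is unimodular, so all triangulations share one $h$-vector, which coincides with the Ehrhart $h^*$-data (this is the paper's Theorem \ref{thm:h=E}, proved there by direct manipulation in the basis $C_k(s)$ rather than by quoting the $h^*$-machinery, but the content is the same), and the theorem is thereby equivalent to the identity $\varepsilon_G(t)=\sum_{\mathbf f}\binom{t+d-\bar\iota(\mathbf f)}{d}$ with $d=|E|+|V|-2$. Your step (a) is also available in the literature: in \emph{any} triangulation of $Q_G$ the maximal cells are spanning trees inducing pairwise distinct hypertrees, one per hypertree (this is exactly the $k=0$ case the paper quotes from Postnikov).

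The genuine gap is step (b), and it is not a technical loose end but the entire theorem. You assert that for a triangulation built "from the order" (e.g.\ by pulling) the half-open decomposition removes exactly $\bar\iota(\mathbf f)$ facets from the cell of $\mathbf f$, but you give no mechanism connecting the two sides. Note first a mismatch of data: you fix a linear order on the \emph{edge set} of $G$ (the vertices of $Q_G$), whereas $\bar\iota$ is defined by an order on the color class $E$ and by transfers of valence between hyperedges; these orders live on different sets, and building and proving the dictionary between "which facet of the cell $T_{\mathbf f}$ is shared with an earlier cell" and "which hyperedge is internally inactive for $\mathbf f$" is precisely what has to be done. It is not even known that there is a shelling whose attachment numbers match $\bar\iota$ cell-by-cell, and the paper never proves such a statement: after the same reduction (carried out there by counting interior lattice points and applying Vandermonde), it proves instead that for an \emph{arbitrary} triangulation the number of interior codimension-$k$ faces equals the number of pairs $(\mathbf f,S)$ with $S$ a $k$-element set of internally inactive hyperedges (Theorem \ref{thm:tech}). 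That count is established by a delicate bijection using emerald markers, feelers, favorites, the arborescence lemma, and tight-set/submodularity arguments, and Example \ref{ex:badcase} shows why naive choices of the extending edges fail. All of that work corresponds to what you label "the main obstacle" and leave unargued, so the proposal as it stands does not prove the theorem.
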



We chose the somewhat unusual labels for the color classes to ease the transition to the hypergraph point of view.
In the proof, we argue that if the sequence $a_0,a_1,\ldots,a_{|E|+|V|-2}$ of rational numbers (which will later turn out to consist of non-negative integers, with at least $\max\{\,|E|,|V|\,\}-1$ zeros at the end) is such that for any non-negative integer $s$, we have\footnote{The dimension of $Q_G$ is $|E|+|V|-2$. The binomial coefficient on the right hand side is the number of lattice points in a standard simplex of that dimension and of sidelength $s-k$. When considered for all $k$, these binomial coefficients form a basis 
in the sense of Lemma \ref{lem:basis}.}
\begin{equation}\label{eq:ehrhart}
\varepsilon_G(s):=\left|(s\cdot Q_G)\cap(\Z^{E}\oplus\Z^{V})\right|=\sum_{k=0}^{|E|+|V|-2}a_k{s+|E|+|V|-2-k \choose |E|+|V|-2},
\end{equation}
then both sides of \eqref{eq:I=h} agree with $\sum_{k=0}^{|E|+|V|-2}a_kx^k$. This boils down to a delicate analysis of interior faces in a triangulation of $Q_G$ in which we will rely on previous work by the second author \cite{alex}. We note that a key result of \cite{alex} is the claim $I_{\mathscr H}(1)=I_{\overline{\mathscr H}}(1)$, which is proven by equating both sides to $h_G(1)$. That is, in \cite{alex} it is shown that $\mathscr H$ and $\overline{\mathscr H}$ have the same number of so-called hypertrees (see Definition \ref{def:hiperfa}) and that that number is the number of simplices in each triangulation of $Q_G$.

If $G$ is a complete bipartite graph on $(m+1)+(n+1)$ vertices then $Q_G=\Delta_m\times\Delta_n$ is the product of an $m$- and an $n$-dimensional unit simplex.
If we consider \eqref{eq:ehrhart} in this special case and for $a_k$ we substitute the coefficients of the interior polynomial from a separate 
computation \cite[Example 7.2]{hypertutte}, we obtain the well known identity
\begin{equation}\label{saalschutz}
{s+m \choose m}{s+n \choose n}=
\sum_{k=0}^{\min\{m,n\}}{m \choose k}{\,n\, \choose k}{s+m+n-k \choose m+n}
\end{equation}
due to Saalsch\"utz
. See Example \ref{saal}. The other form in which \eqref{saalschutz} often appears,
\begin{equation}\label{otherform}
{q \choose m}{q \choose n}=\sum_{k=0}^{\min\{m,n\}}{m \choose k}{\,n\, \choose k}{q+k \choose m+n},
\end{equation}
is related to \eqref{saalschutz} by an application of Ehrhart reciprocity to $Q_G$.

Both our main theorem and its proof are entirely combinatorial. Yet the result serves as a crucial step in a knot theoretical program advanced by Juh\'asz, Mura\-kami, Rasmussen, and the first author. Namely, in \cite{homfly} it is shown that certain extremal coefficients in the Homfly polynomial of a special alternating link agree with coefficients in the $h$-vector $h_G$, where $G$ is the Seifert graph of the link. By the main result of this paper, we see that those same Homfly coefficients are also the coefficients of the interior polynomial of either one of the corresponding two hypergraphs. By definition, interior polynomials are derived from the structure of the set of hypertrees of the hypergraph (see \cite{hypertutte} and Section~\ref{sec:interior}). Finally, in \cite{jkr} it is proven that the hypertrees in question appear as the supporting $\spin$ structures of some Floer homology groups that are naturally associated to the special alternating link. Hence it is possible to read certain Homfly coefficients directly out of Floer homology. To the best of our knowledge such a result is the first of its kind. In order to keep this paper concise, we omit a full explanation and mention only one topological corollary of Theorem \ref{thm:main}.

\begin{cor}
Regarding a positive special alternating link diagram with Seifert graph $G$, the coefficient of $z^{b_1(G)}$ in its Homfly polynomial $P(v,z)$ is $v^{b_1(G)}I(v^2)$, where $I$ is the common interior polynomial of the two hypergraphs induced by $G$.
(Here the first Betti number (also known as nullity) $b_1(G)$ is the highest exponent of $z$ that occurs in $P$ \cite{morton}.)
\end{cor}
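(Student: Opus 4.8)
The plan is to deduce the corollary by feeding Theorem \ref{thm:main} into the main result of \cite{homfly}, using \cite{morton} only to locate the relevant power of $z$. First I would record the standard dictionary between the diagram $D$ and its Seifert graph $G$: since $D$ is connected and special, $G$ is connected and bipartite (so Theorem \ref{thm:main} applies, with $E$ and $V$ the two ``sides''), its vertices are the Seifert circles of $D$, so there are $|E|+|V|$ of them, and its edges are the crossings of $D$. With the usual notation $c(D)$, $s(D)$ for those two counts, $c(D)-s(D)+1=|E(G)|-|V(G)|+1=b_1(G)$, which is precisely the quantity Morton's inequality \cite{morton} supplies as an upper bound for the top $z$-degree of $P(v,z)$. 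For a positive diagram this bound is sharp, which is the parenthetical assertion in the statement and is what lets one speak of ``the coefficient of $z^{b_1(G)}$'' as a genuine leading coefficient.

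Next I would quote the theorem of \cite{homfly} that evaluates this leading coefficient: it equals, up to an explicit monomial in $v$ depending only on $G$, the $h$-polynomial $h_G$ in the variable $v^{-2}$ (equivalently, the reverse of $h_G$ in $v^2$). Concretely one can write this as $[z^{b_1(G)}]P(v,z)=v^{\,b_1(G)+2(|E|+|V|-1)}\,h_G(v^{-2})$, and then substitute from Theorem \ref{thm:main}: setting $x=v^2$ in \eqref{eq:I=h} gives $I(v^2)=v^{\,2(|E|+|V|-1)}h_G(v^{-2})$, so the monomial prefactor cancels and
\[
[z^{b_1(G)}]P(v,z)=v^{\,b_1(G)+2(|E|+|V|-1)}\cdot v^{-2(|E|+|V|-1)}\,I(v^2)=v^{\,b_1(G)}\,I(v^2).
\]
That the phrase ``the common interior polynomial of the two hypergraphs induced by $G$'' is meaningful is itself part of Theorem \ref{thm:main}, which identifies $I_{\mathscr H}$, $I_{\overline{\mathscr H}}$ and $x^{|E|+|V|-1}h_G(x^{-1})$; so the choice of hypergraph is immaterial and nothing further is needed.

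I expect the one genuinely delicate point, and the part that will absorb most of the write-up, to be reconciling normalizations: the HOMFLY conventions of \cite{homfly} (the variable convention, any mirror-image or sign choice, reduced versus unreduced $P$, and which color class is labelled $E$) must be translated into those fixed here so that the exponent of $v$ and the orientation of the $h$-vector line up to produce exactly $v^{b_1(G)}I(v^2)$ and not some reciprocal or shifted variant. Once that dictionary is written out explicitly, the corollary is immediate; as a check that no sign or degree shift has slipped through, one can test it on the torus links $T(2,n)$ (where $G$ consists of two vertices joined by $n$ edges, $I\equiv 1$, $b_1(G)=n-1$, and indeed $[z^{n-1}]P(T(2,n))=v^{n-1}$) and on the links whose Seifert graph is $K_{m,n}$ (where $I$ is the polynomial underlying the Saalsch\"utz identity \eqref{saalschutz}).
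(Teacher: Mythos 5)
Your proposal is correct and follows exactly the route the paper intends: the corollary is obtained by combining the main theorem of \cite{homfly} (which expresses the top $z$-coefficient of $P$, located via Morton's bound \cite{morton}, as a monomial in $v$ times $h_G$ evaluated at $v^{-2}$) with Theorem \ref{thm:main}, which converts that $h$-vector expression into $v^{b_1(G)}I(v^2)$; the paper deliberately omits the detailed normalization bookkeeping you flag, deferring it to \cite{homfly}.
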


Observations by Mura\-kami and the first author \cite{homfly} concerning generalized parking functions \cite{ps} for the dual of a plane bipartite graph $G$ yield that their natural enumerator also coincides with the interior polynomial of $G$. See Corollary \ref{cor:park}.

Some of our results provide new information on the Tutte polynomial $T(x,y)$ in the classical (graph and matroid) context. As a special case of Theorem \ref{thm:mink} we see that $T(1,y)$ and $T(x,1)$ are equivalent to lattice point counts in the Minkowski sum of a simplex or inverted simplex of variable sidelength with the spanning tree polytope (for graphs) or base polytope (for matroids). Our duality theorem (Corollary~\ref{cor:duality}) can be used to express $T(x,1)$ in the case of a graph 
as a sum written in terms of activities associated to vertices instead of edges, cf.\ Corollary \ref{cor:reliable}.

This paper updates its predecessor \cite{hypertutte} by replacing the two longest proofs. Namely, Corollary \ref{cor:duality} implies that the two deletion-contraction formulas established there \cite[Proposition 6.14 and Theorem 7.3]{hypertutte} are equivalent, so a separate proof of the latter is no longer necessary. We also give a new proof of the well-definedness of the interior (and exterior) polynomial, shorter and more elegant than the original \cite[Theorem 5.4]{hypertutte}.

The paper is organized as follows. In Section \ref{sec:interior} we recall the construction of the interior polynomial and re-prove
that it is well-defined. In Section \ref{sec:root} we summarize some facts about the root polytope and establish the equivalence of its Ehrhart polynomial with the common $h$-vector of its triangulations. Section \ref{sec:prep} contains a few lemmas needed for the proof of Theorem \ref{thm:main}, which is given in Section \ref{sec:main}. At the end of the paper we discuss several corollaries.

Acknowledgements. We are grateful to P\'eter Frenkel for recognizing Saalsch\"utz's identity, and to Suho Oh and Dylan Thurston for stimulating conversations.

\section{The interior polynomial}\label{sec:interior}

In this section we set notation, recall some necessary material from \cite{hypertutte}, and give a new, shorter proof of one of that paper's key results.

\subsection{Hypertrees}

A \emph{bipartite graph} is a triple $G=(V_0,V_1,E)$, where $V_0$ and $V_1$ are disjoint finite sets, called \emph{color classes}, and $E$ is a finite set of edges connecting an element of $V_0$ to an element of $V_1$. Multiple edges are not allowed. We will treat $(V_0,V_1,E)$, $(V_1,V_0,E)$, and the graph $(V_0\cup V_1,E)$ as the same object.

A \emph{hypergraph} is a pair $\mathscr H=(V,E)$, where $V$ is a finite set and $E$ is a finite multiset of non-empty subsets of $V$. Elements of $V$ are called \emph{vertices} and the elements of $E$ are the \emph{hyperedges}. 

The sets $V$ and $E$ that constitute the hypergraph~$\mathscr H$ may be viewed as the color classes of a bipartite graph $\bip\mathscr H$, where we connect $v\in V$ to $e\in E$ with an edge if $v\in e$. We will call the result the \emph{bipartite graph associated to the hypergraph}~$\mathscr H$.
We will sometimes refer to $E$ as the emerald color class of $\bip(V,E)$ and to $V$ as the violet color class.

The construction of $\bip\mathscr H$ is reversible if we specify one color class in the bipartite graph $G=(V_0,V_1,E)$. Let us denote the resulting hypergraphs with
\begin{equation}\label{eq:absdual}
\mathscr H_0=(V_1,V_0)\quad\text{and}\quad\mathscr H_1=(V_0,V_1).
\end{equation}

\begin{Def}
The bipartite graph $G$ above is said to \emph{induce} the hypergraphs~$\mathscr H_0$ and $\mathscr H_1$. Two hypergraphs are called \emph{abstract duals} if they can be obtained in the form \eqref{eq:absdual}. In other words, the abstract dual (also known as the \emph{transpose}) $\overline{\mathscr H}=(E,V)$ of a hypergraph $\mathscr H=(V,E)$ is defined by interchanging the roles of its vertices and hyperedges.
\end{Def}

A certain generalization of the notion of a spanning tree to hypergraphs plays a central role in this paper. It appears in both authors' previous work. Although it was first introduced as the `left (or right) degree vector' \cite{alex}, we will use the following terminology instead.

\begin{Def}\label{def:hiperfa}
Let $\mathscr H=(V,E)$ be a hypergraph so that its associated bipartite graph $\bip\mathscr H$ is connected. (In this situation it is common to call $\mathscr H$ itself connected.) By a \emph{hypertree} in $\mathscr H$ we mean a function (vector)~$\mathbf f\colon E\to\N=\{\,0,1,\ldots\,\}$ so that a spanning tree of $\bip\mathscr H$ can be found which has valence $\mathbf f(e)+1$ at each $e\in E$. Such a spanning tree is said to \emph{realize} or to \emph{induce} $\mathbf f$. We denote the set of all hypertrees in $\mathscr H$ with $B_{\mathscr H}$.
\end{Def}

It is easy to show that all hypertrees $\mathbf f$ in $\mathscr H$ satisfy $\sum_{e\in E}\mathbf f(e)=|V|-1$. The set $B_{\mathscr H}$ is such that $(\conv B_{\mathscr H})\cap\Z^E=B_{\mathscr H}$, cf.\ Lemma \ref{lem:ineq}, where $\conv$ denotes the usual convex hull. We will call $\conv B_{\mathscr H}$ the \emph{hypertree polytope} of $\mathscr H$.

The definition of the interior polynomial is based on hypertrees and the following concept. It is a natural extension of internal activity used in the case of graphs (and 
matroids). A small difference is that if we specialize our version to graphs, external edges of a spanning tree become internally active; another is that we count inactive hyperedges instead of active ones. But since the number of external edges is the same for all spanning trees (namely, the first Betti number of the graph, also known as its nullity), all this just mirrors and shifts the distribution of the `classical' statistic. 

\begin{Def}\label{def:activity}
Let $(V,E)$ be a hypergraph and let us order the set $E$ arbitrarily. A hyperedge $e\in E$ is \emph{internally active} with respect to the hypertree $\mathbf f$ if it is not possible to decrease $\mathbf f(e)$ by $1$ and increase $\mathbf f$ at a hyperedge smaller than $e$ by $1$ so that another hypertree results. Let $\iota(\mathbf f)$ denote the number of internally active hyperedges with respect to $\mathbf f$. 
\end{Def}

If the hypertree $\mathbf f\in B_{\mathscr H}$ and the hyperedges $e, e'\in E$ are such that changing the value $\mathbf f(e)$ to $\mathbf f(e)-1$ and the value $\mathbf f(e')$ to $\mathbf f(e')+1$ results in another hypertree $\mathbf f'$, then we say that $\mathbf f$ and $\mathbf f'$ are related by a \emph{transfer of valence} from $e$ to $e'$.

We call a hyperedge \emph{internally inactive} with respect to a hypertree if it is not internally active and denote the number of such hyperedges (for a given $\mathbf f$) by $\bar\iota(\mathbf f)=|E|-\iota(\mathbf f)$. This value will be called the \emph{internal inactivity} of $\mathbf f$.

\begin{Def}\label{def:I}
Let $\mathscr H=(V,E)$ be a hypergraph so that $\bip\mathscr H$ is connected. For some fixed linear order on $E$ we consider the generating function of internal inactivity, $I_\mathscr H(\xi)=\sum_{\mathbf f\in B_{\mathscr H}} \xi^{\bar\iota(\mathbf f)}$, and call it the \emph{interior polynomial} of $\mathscr H$.
\end{Def}

If $\mathscr H$ is a graph with Tutte polynomial $T(x,y)$, then its interior polynomial is 
$\xi^{|V|-1}T(1/\xi,1)$.
For any $\mathscr H$, the polynomial $I_{\mathscr H}$ is independent of the order that is used to define $\bar\iota$ \cite[Theorem 5.4]{hypertutte}. We give a new proof of this fact in the next subsection. There is an analogous notion of external activity and a corresponding exterior polynomial of hypergraphs \cite{hypertutte}. 
However it does not play much of a role in this paper other than in the planar case, cf.\ \cite[Theorem 8.3]{hypertutte} and Corollary \ref{cor:polynomials}. 

\begin{ex}\label{ex:k23poly}
Let the complete bipartite graph $K_{2,3}$ induce the hypergraph $\mathscr H$ with three hyperedges and $\overline{\mathscr H}$ with two hyperedges. Both have three hypertrees, namely $(1,0,0),(0,1,0),(0,0,1)$ in $\mathscr H$ and $(2,0),(1,1),(0,2)$ in $\overline{\mathscr H}$. In both cases, up to isomorphism there is only one way to order hyperedges and the resulting interior polynomial is $1+2\xi^2$.
\end{ex}

\begin{ex}\label{ex:12a1097}
Both hypergraphs induced by the plane bipartite graph pictured in Figure \ref{fig:12a1097} have the interior polynomial $I(\xi)=1+4\xi+7\xi^2+4\xi^3$. 
This claim can be verified by a completely elementary computation. For example, if we treat the four emerald points $a,b,c,d$ as hyperedges and write hypertrees $\mathbf f$ in the form of a vector $(\mathbf f(a),\mathbf f(b),\mathbf f(c),\mathbf f(d))$, then $(1,0,1,2)$ becomes a hypertree. This is because the spanning tree pictured on the right realizes it. 

\begin{figure}[h!] 
\labellist
\small
\pinlabel $a$ at 300 -30
\pinlabel $b$ at 560 230
\pinlabel $c$ at 230 560
\pinlabel $d$ at 240 350
\endlabellist
   \centering
   \includegraphics[width=2.5in]{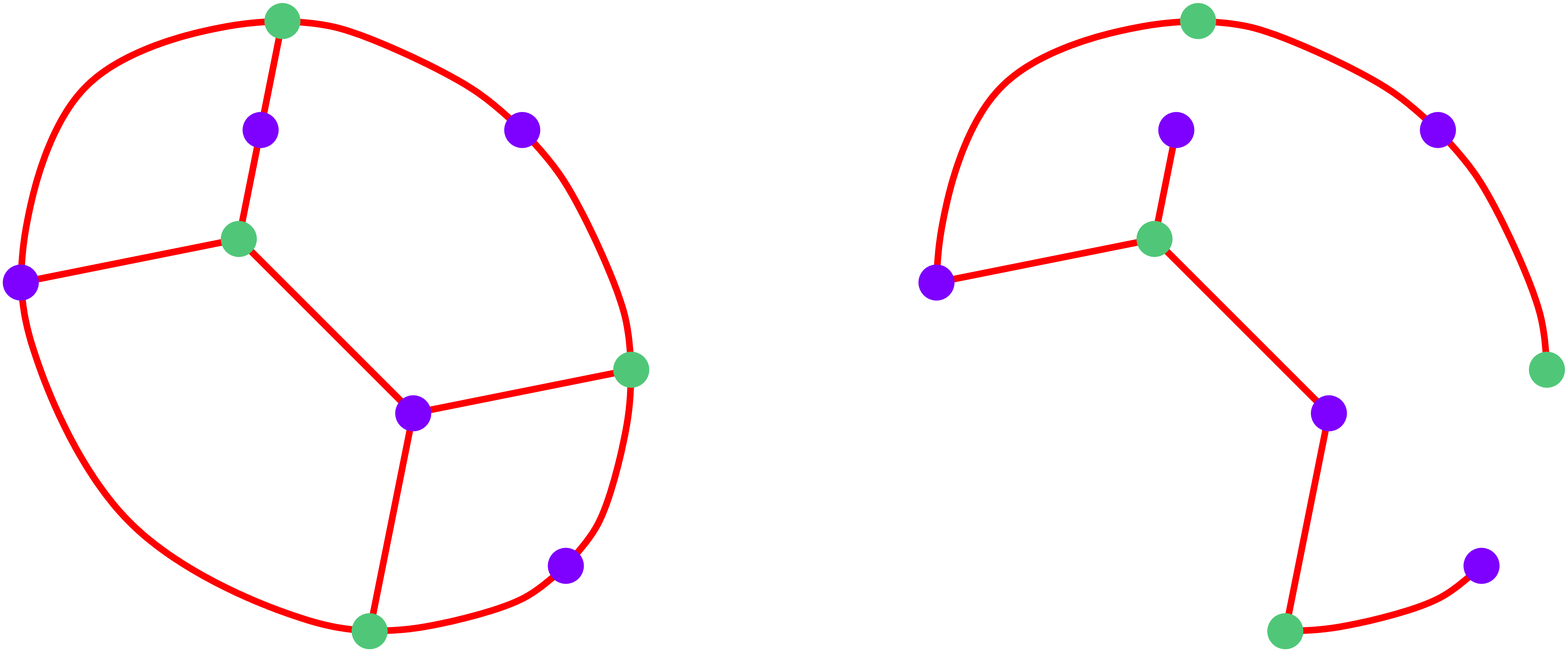} 
   \caption{A plane bipartite graph with one of its spanning trees.}
   \label{fig:12a1097}
\end{figure}

The other $15$ hypertrees (and the $16$ hypertrees of the abstract dual hypergraph with the five violet hyperedges) may be found by solving the system of inequalities given in Lemma \ref{lem:ineq} below, whose necessity is particularly easy to see, or by any number of ad hoc methods\footnote{Another systematic way is to construct the Minkowski sum $P_E$ (see \cite{alex} and Section \ref{sec:root}) and enumerate integer translates of the unit simplex in it, cf.\ the discussion right before and after formula \eqref{eq:bigcell}. If $|E|=4$ then $P_E$ is $3$-dimensional. But for instance for our abstract dual hypergraph of five hyper\-edges, it becomes $4$-dimensional so that the count is much harder to carry out `by hand.'}. We do not, however, recommend enumerating all spanning trees as there are $217$ of those.

For any order of the hyperedges, the smallest hyperedge is always internally active with respect to any hypertree. If a hypertree assigns $0$ to a hyperedge, then that hyperedge is automatically internally active with respect to the hypertree. So if we use the order $a<b<c<d$ in our example, then with respect to $(1,0,1,2)$ the hyperedges $a$ and $b$ are internally active. On the other hand, $c$ and $d$ are internally inactive. This is demonstrated by the upper right and lower left spanning trees (and their induced hypertrees) of Figure \ref{fig:badcase}. Indeed, one shows a transfer of valence from $c$ to $b$ and the other a transfer of valence from $d$ to $a$. Hence $\bar\iota(1,0,1,2)=2$ so that the hypertree contributes $1$ to the coefficient of $\xi^2$ in $I$. The other $15$ (or rather, $31$) internal inactivities can be computed the same way.
\end{ex}

At times it will be convenient to rely on submodular function techniques, that is to say, on Lemma \ref{lem:tight} below. This is made possible by the following observations.

For a bipartite graph $G$ with color classes $E$ and $V$ and for a subset $E'\subset E$, we let $G\big|_{E'}$ denote the graph formed by $E'$, all edges of $G$ adjacent to elements of $E'$, and their endpoints in $V$. We let $c(E')$ denote the number of connected components of $G\big|_{E'}$ and we also let $\bigcup E'=V\cap \left(G\big|_{E'}\right)$ (this notation is natural if we view $(V,E)$ as a hypergraph). Finally we let $\mu(\varnothing)=0$ and otherwise
\[\mu(E')=\left|\textstyle{\bigcup E'}\right|-c(E').\]
Then $\mu$ is a non-decreasing (i.e., $E''\subset E'$ implies $\mu(E'')\le\mu(E')$) submodular function on the power set of $E$. The latter means that for all $A,B\subset E$ we have
\[\mu(A)+\mu(B)\ge\mu(A\cup B)+\mu(A\cap B).\]
This fact is probably `part of the folklore.' 
For a proof, see \cite[Proposition 4.7]{hypertutte}.


\begin{lemma}[\cite{hypertutte}, Theorem 3.4]\label{lem:ineq}
If $G$ is connected, then $\mu(E)=|V|-1$ and the hypertrees $\mathbf f$ in $(V,E)$ are exactly the integer solutions of the system of inequalities
\[\mathbf f(e)\ge0\text{ for all }e\in E; \quad \sum_{e\in E}\mathbf f(e)=|V|-1; \quad \sum_{e\in E'}\mathbf f(e)\le\mu(E')\text{ for all }E'\subset E.\]
\end{lemma}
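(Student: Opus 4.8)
## Proof Proposal for Lemma \ref{lem:ineq}

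The plan is to prove this in two halves: necessity of the inequalities (easy), and sufficiency (the real work). For necessity, suppose $\mathbf f$ is a hypertree, realized by a spanning tree $T$ of $\bip\mathscr H$. Clearly $\mathbf f(e)\ge 0$, and since $T$ has $|V|+|E|-1$ edges and valence $\mathbf f(e)+1$ at each $e\in E$, summing valences over $E$ counts every edge once, giving $\sum_{e\in E}(\mathbf f(e)+1)=|V|+|E|-1$, i.e.\ $\sum_{e\in E}\mathbf f(e)=|V|-1$. For the submodular bound, fix $E'\subset E$ and look at the forest $T\big|_{E'}$ obtained by keeping $E'$, the tree-edges of $T$ incident to $E'$, and their $V$-endpoints. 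Its vertex set has size at most $|E'|+|\bigcup E'|$, it has at least $c(E')$ components (since $G\big|_{E'}$ does, and $T\big|_{E'}\subset G\big|_{E'}$ may only have more — actually we want the reverse bound; the point is that $T\big|_{E'}$ lives inside $G\big|_{E'}$, so it has \emph{at least} as many components as $G\big|_{E'}$, hence $\ge c(E')$), and the number of its edges is $\sum_{e\in E'}\mathbf f(e)$ plus however many of those tree-edges reach outside — but restricting to $T\big|_{E'}$ counts exactly the edges of $T$ incident to $E'$, which is $\sum_{e\in E'}(\mathbf f(e)+1)$. Being a forest, \#edges $\le$ \#vertices $-$ \#components $\le (|E'|+|\bigcup E'|) - c(E')$, so $\sum_{e\in E'}\mathbf f(e)+|E'|\le |E'|+\mu(E')$, giving $\sum_{e\in E'}\mathbf f(e)\le\mu(E')$. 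Taking $E'=E$ and using connectedness (so $c(E)=1$, $\bigcup E=V$) and the equality $\sum_{e\in E}\mathbf f(e)=|V|-1$ forces $\mu(E)=|V|-1$.

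For sufficiency, I would argue by induction on $\sum_e \mathbf f(e)=|V|-1$, building up a spanning tree one $V$-vertex at a time. The cleanest route is to use the submodularity of $\mu$ to run a matroid-union / transversal-matroid style argument: the inequality system $\{\sum_{E'}\mathbf f(e)\le\mu(E')\}$ together with $\mathbf f\ge 0$ defines a polymatroid (here is where Lemma \ref{lem:tight}, mentioned in the excerpt as forthcoming, and the stated non-decreasing submodularity of $\mu$ enter), and the equality $\sum_e\mathbf f(e)=\mu(E)$ says $\mathbf f$ is a \emph{base} of that polymatroid. One then shows every integral base of this polymatroid is realized by a spanning tree. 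Concretely, pick any $v\in V$; I would like to find a $V$-vertex that can be ``removed'' compatibly with $\mathbf f$, i.e.\ an edge $e_0v$ of $G$ such that decreasing $\mathbf f(e_0)$ by $1$ still yields a feasible (integral, connected) vector for the hypergraph $(V\setminus\{v\}, E)$ — at which point induction supplies a spanning tree of $\bip(V\setminus\{v\},E)$ realizing it, and re-attaching $v$ via the edge $e_0v$ produces the required spanning tree of $\bip\mathscr H$. Feasibility after removal is exactly the statement that the restriction/contraction of the polymatroid behaves well, which is where submodularity of $\mu$ does the heavy lifting (via a ``uncrossing'' argument on tight sets).

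The main obstacle is ensuring, in the inductive step, that one can \emph{simultaneously} pick the vertex $v$, the incident edge $e_0$, and verify that $\mathbf f - \chi_{e_0}$ still satisfies all the inequalities for the smaller hypergraph \emph{and} that the smaller hypergraph's bipartite graph is still connected (so that induction applies). If $\mathbf f(e_0)$ hits a tight set — some $E'\ni e_0$ with $\sum_{E'}\mathbf f = \mu(E')$ — then decrementing $\mathbf f(e_0)$ requires that $\mu$ also drops when $v$ is deleted, i.e.\ that $v$ is genuinely ``used up'' by $E'$. The way to guarantee a good choice exists is a counting argument: since $\mu(E)=|V|-1$ and $\mu$ is non-decreasing and submodular, the family of tight sets is closed under union and intersection, so there is a unique maximal proper tight set (or none), and any $v$ outside $\bigcup(\text{that set})$, together with any edge at $v$, works. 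Handling the edge cases (when every $v$ lies in some tight set, or when deletion disconnects $\bip\mathscr H$) is the fiddly part; I expect this is precisely why the authors cite it as \cite[Theorem 3.4]{hypertutte} rather than reproving it, and in the paper's own development one would lean on Lemma \ref{lem:tight} to package the submodular bookkeeping.
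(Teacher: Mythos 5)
There is nothing to compare against inside this paper: Lemma \ref{lem:ineq} is quoted from \cite{hypertutte} (Theorem 3.4) and is not re-proved here, so your proposal has to stand on its own. Your necessity direction does: counting edges of a realizing spanning tree $\Gamma$ incident to $E'$ gives $\sum_{e\in E'}(\mathbf f(e)+1)$ edges in the forest $\Gamma\big|_{E'}$, which has at most $|E'|+\left|\bigcup E'\right|$ vertices and at least $c(E')$ components (each component of $G\big|_{E'}$ contains an element of $E'$, so the component map is onto), and this yields all three constraints together with $\mu(E)=|V|-1$. This half is fine, and it is exactly the part the authors describe as ``particularly easy to see.''

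The sufficiency direction, however, has a genuine gap, and the specific mechanism you propose to close it does not work. You claim that since tight sets are closed under union and intersection there is a unique maximal \emph{proper} tight set, and that any $v\in V$ outside its support can be split off. Neither claim holds: the union of two proper tight sets can be all of $E$, so maximal proper tight sets need not be unique, and their supports can cover all of $V$. Already for the path $v_1e_1v_2e_2v_3$ (so $E=\{e_1,e_2\}$, $\mu(\{e_i\})=1$, $\mu(E)=2$) and the hypertree $\mathbf f=(1,1)$, both $\{e_1\}$ and $\{e_2\}$ are tight and $\bigcup\{e_1\}\cup\bigcup\{e_2\}=V$, yet $\mathbf f$ is obviously realizable; your selection rule produces no candidate $v$ at all. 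A second unaddressed point is that deleting $v$ can decrease $\mu(E')$ by more than $1$ when $v$ is a cut vertex of $G\big|_{E'}$, so feasibility of $\mathbf f-\mathbf i_{\{e_0\}}$ for the smaller hypergraph is not a routine consequence of $v$ avoiding tight sets; the uncrossing bookkeeping you allude to (via Lemma \ref{lem:tight}) would have to be carried out, and as you yourself concede, this ``fiddly part'' is precisely the content of the cited theorem. As it stands, the hard implication --- every integer solution of the system is induced by some spanning tree --- remains unproved.
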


We note that the non-negativity of $\mathbf f$ follows from the other constraints.
We say that the set $E'\subset E$ is \emph{tight} at $\mathbf f$ if $\sum_{e\in E'}\mathbf f(e)=\mu(E')$ holds. If $E'$ is tight at $\mathbf f$ and it so happens that for another hypertree $\mathbf g$, we have $\sum_{e\in E'}\mathbf f(e)=\sum_{e\in E'}\mathbf g(e)$ (for example, if $\mathbf f$ and $\mathbf g$ differ by a transfer of valence between elements of $E'$), then $E'$ is also tight at $\mathbf g$. The next lemma follows immediately from \cite[Theorem 44.2]{sch}.

\begin{lemma}\label{lem:tight}
If the sets $A,B\subset E$ are both tight at the hypertree $\mathbf f$, then so are $A\cup B$ and $A\cap B$.
\end{lemma}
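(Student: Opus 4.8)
The plan is to run the standard ``squeeze'' argument that shows the family of tight sets at a point of a polymatroid base polytope is a sublattice of the Boolean lattice $2^E$. The only ingredients needed are the submodularity of $\mu$ (recalled just before Lemma \ref{lem:ineq}), the inequality constraints satisfied by every hypertree (Lemma \ref{lem:ineq}), and the elementary observation that $E'\mapsto\sum_{e\in E'}\mathbf f(e)$ is a \emph{modular} set function, i.e.\ it satisfies inclusion--exclusion with equality.

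Concretely, I would start from the hypotheses $\sum_{e\in A}\mathbf f(e)=\mu(A)$ and $\sum_{e\in B}\mathbf f(e)=\mu(B)$, add them, and rewrite the left-hand side using modularity of the counting sum as $\sum_{e\in A\cup B}\mathbf f(e)+\sum_{e\in A\cap B}\mathbf f(e)$. This gives the chain
\[
\mu(A)+\mu(B)=\sum_{e\in A\cup B}\mathbf f(e)+\sum_{e\in A\cap B}\mathbf f(e)\le\mu(A\cup B)+\mu(A\cap B),
\]
where the inequality is two applications of the third family of constraints in Lemma \ref{lem:ineq} (one for $A\cup B$, one for $A\cap B$). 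On the other hand, submodularity of $\mu$ gives the reverse inequality $\mu(A)+\mu(B)\ge\mu(A\cup B)+\mu(A\cap B)$. Hence every inequality in the chain is an equality, and in particular $\sum_{e\in A\cup B}\mathbf f(e)=\mu(A\cup B)$ and $\sum_{e\in A\cap B}\mathbf f(e)=\mu(A\cap B)$, which is exactly the assertion that $A\cup B$ and $A\cap B$ are tight at $\mathbf f$.

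There is essentially no serious obstacle here; the proof is a two-line computation once the right functions are in place. The one point requiring a moment's care is to make sure we are allowed to bound $\sum_{e\in A\cap B}\mathbf f(e)$ and $\sum_{e\in A\cup B}\mathbf f(e)$ by $\mu$ of the respective sets, which is legitimate precisely because Lemma \ref{lem:ineq} provides these inequalities for \emph{all} subsets of $E$, not merely for a chosen collection; and to note that the non-negativity of $\mathbf f$ plays no role in this particular argument. (This is why the lemma is stated as an immediate consequence of the general polymatroid fact \cite[Theorem 44.2]{sch}.)
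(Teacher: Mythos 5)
Your proof is correct: the chain $\mu(A)+\mu(B)=\sum_{e\in A\cup B}\mathbf f(e)+\sum_{e\in A\cap B}\mathbf f(e)\le\mu(A\cup B)+\mu(A\cap B)\le\mu(A)+\mu(B)$ forces equality throughout, which is exactly tightness of $A\cup B$ and $A\cap B$. This is the same argument the paper relies on --- it simply cites \cite[Theorem 44.2]{sch} for this standard uncrossing fact rather than writing it out, so your proposal just makes the cited step explicit.
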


Let now $\Gamma$ be a spanning tree in $G$ which induces the hypertree $\mathbf f\colon E\to\N$. For any subset $E'\subset E$, the connected components of $\Gamma\big|_{E'}$ induce a partition of $E'$.

\begin{lemma}\label{lem:tightcomp}
Let $E'\subset E$ be a tight set at the hypertree $\mathbf f$ and $\Gamma\subset G$ a realization of $\mathbf f$. Then each part in the partition of $E'$ induced by $\Gamma$ is itself tight at $\mathbf f$.
\end{lemma}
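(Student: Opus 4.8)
The plan is to reduce the claim to Lemma 1.9 (lem:tight) by showing that each connected component of $\Gamma\big|_{E'}$ is itself tight at $\mathbf f$, using the fact that $\Gamma$ is a tree so its restriction to any vertex set has no cycles. Write $E'=E_1\sqcup E_2\sqcup\cdots\sqcup E_r$ for the partition into the vertex sets of the connected components of $\Gamma\big|_{E'}$. The key observation is that since $\Gamma$ is a spanning tree of $G$, the subgraph $\Gamma\big|_{E_i}$ is itself a tree, hence connected, so as a subgraph of $G\big|_{E_i}$ it certifies that $c(E_i)=1$ and that $\bigcup E_i$ consists precisely of the $V$-vertices touched by $\Gamma\big|_{E_i}$. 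Consequently the number of edges of $\Gamma$ incident to $E_i$ equals $\sum_{e\in E_i}(\mathbf f(e)+1)=\bigl(\sum_{e\in E_i}\mathbf f(e)\bigr)+|E_i|$, while on the other hand a tree on $|E_i|+|\bigcup E_i|$ vertices has $|E_i|+|\bigcup E_i|-1$ edges, and each such edge is incident to exactly one element of $E_i$. (Here I use that distinct components $E_i$, $E_j$ share no vertices in $V$, because a common $V$-vertex would connect them in $\Gamma\big|_{E'}$.) Equating the two counts gives $\sum_{e\in E_i}\mathbf f(e)=|\bigcup E_i|-1=\mu(E_i)$, so $E_i$ is tight.

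Carrying this out, I would first record the edge-count identity for a spanning tree: if $\Gamma$ realizes $\mathbf f$, then for any $E''\subseteq E$ on which $\Gamma\big|_{E''}$ has components with vertex sets $W_1,\dots,W_t$ (in $V$) and edge-subsets $F_1,\dots,F_t$ of $E''$, each $\Gamma\big|_{F_j\cup W_j}$ is a tree, hence $\sum_{e\in F_j}(\mathbf f(e)+1)=|F_j|+|W_j|-1$. Summing over $j$ and over the induced partition then yields the value of $\sum_{e\in E''}\mathbf f(e)$ in terms of the component structure of $\Gamma\big|_{E''}$. Specializing to a single component gives tightness of that component as above; in particular it also re-derives that $\sum_{e\in E'}\mathbf f(e)\le\mu(E')$, with equality exactly when the component structure of $\Gamma\big|_{E'}$ matches that of $G\big|_{E'}$ — which is the hypothesis that $E'$ is tight. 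So when $E'$ is tight, $\Gamma\big|_{E'}$ and $G\big|_{E'}$ have the same connected components (same vertex sets), and each such component, being tight on its own by the single-component computation, finishes the proof.

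Alternatively — and perhaps more cleanly — once one knows (from the edge count) that a tight $E'$ forces $\Gamma\big|_{E'}$ and $G\big|_{E'}$ to have identical component partitions, one can argue purely inside the submodular framework: each component $E_i$ satisfies $\sum_{e\in E_i}\mathbf f(e)\le\mu(E_i)$ by Lemma 1.8, these inequalities sum to $\sum_{e\in E'}\mathbf f(e)\le\sum_i\mu(E_i)$, and since the $E_i$ are the vertex-disjoint pieces of $G\big|_{E'}$ we have $\sum_i\mu(E_i)=\bigl(\sum_i|{\textstyle\bigcup}E_i|\bigr)-r=|{\textstyle\bigcup}E'|-c(E')=\mu(E')$; tightness of $E'$ then forces equality in every summand, i.e.\ tightness of each $E_i$.

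The main obstacle I anticipate is bookkeeping the relationship between the partition of $E'$ by components of $\Gamma\big|_{E'}$ and the one by components of $G\big|_{E'}$: a priori $\Gamma\big|_{E'}$ could be more disconnected than $G\big|_{E'}$, and one must use tightness to rule this out before the per-component argument applies. The cleanest way around it is the edge-counting identity above, which simultaneously proves the inequality $\sum_{e\in E'}\mathbf f(e)\le\mu(E')$ of Lemma 1.8 and pinpoints exactly when equality holds, namely when $\Gamma\big|_{E'}$ has as few components as $G\big|_{E'}$; everything else is then a routine consequence of "a tree on $n$ vertices has $n-1$ edges" applied component by component.
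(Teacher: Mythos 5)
Your proposal is correct and takes essentially the same route as the paper: tightness plus the edge count $\sum_{e\in E'}(\mathbf f(e)+1)$ shows that $\Gamma\big|_{E'}$ is a spanning forest of $G\big|_{E'}$, so the two component partitions coincide and the per-component ``tree on $n$ vertices has $n-1$ edges'' count yields $\sum_{e\in E''}\mathbf f(e)=\mu(E'')$ for each part. One caution: the ``key observation'' of your first paragraph --- that $\Gamma$ being a spanning tree already certifies $\bigcup E_i=V\cap\left(\Gamma\big|_{E_i}\right)$ --- is false without invoking tightness (and, relatedly, equality in your final paragraph requires $\Gamma\big|_{E'}$ to cover all of $\bigcup E'$, not merely to have as few components as $G\big|_{E'}$), so the careful version in your second paragraph, which uses tightness of $E'$ before the per-component computation, is the one to keep; it matches the paper's argument.
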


\begin{proof}
An equivalent definition of $\mu$ is that a cycle-free subgraph of $G\big|_{E'}$ is a spanning forest of $G\big|_{E'}$ if and only if it has $|E'|+\mu(E')$ edges. Since now $E'$ is tight, we have $|E'|+\mu(E')=|E'|+\sum_{e\in E'}\mathbf f(e)=\sum_{e\in E'}(\mathbf f(e)+1)$, which is the number of edges in $\Gamma\big|_{E'}$ (note that $\mathbf f(e)+1$ is the degree of $e$ in $\Gamma$ and each edge of $\Gamma\big|_{E'}$ has a unique endpoint in $E'$). Thus $\Gamma\big|_{E'}$ is a spanning forest in $G\big|_{E'}$; in particular, the connected components of $\Gamma\big|_{E'}$ and those of $G\big|_{E'}$ induce the same partition of $E'$ and for each part $E''$ of that partition, $\Gamma\big|_{E''}$ is a spanning tree in $G\big|_{E''}$. By the same logic as above the latter implies $\sum_{e\in E''}(\mathbf f(e)+1)=|E''|+\mu(E'')$, i.e., $\sum_{e\in E''}\mathbf f(e)=\mu(E'')$, as claimed.
\end{proof}

\subsection{Order-independence}

The fact that the interior polynomial is independent of the order imposed on the hyperedges was proved in \cite{hypertutte} using a straightforward but rather long argument. The reasoning that establishes the main theorem of this paper can also be viewed as a rather circuitous proof of order-independence. Here we give a third argument which is shorter than the other two. It also serves to emphasize that interior (and exterior) polynomials correspond to Ehrhart-type lattice point counts not just in the sense of \eqref{eq:ehrhart} but in a more direct way as well. (One might expect this observation to be the basis of a short proof of Theorem \ref{thm:main}, but so far no such connection has been found.)

We fix the connected hypergraph $\mathscr H=(V,E)$ whose set of hypertrees is $B_{\mathscr H}$. 
Let the \emph{inverted standard simplex} $\nabla_E$ be the convex hull of the set $\{\,-\mathbf i_{\{e\}}\in\R^E\mid e\in E\,\}$. Here and in the rest of the paper $\mathbf i_S$ denotes the indicator function of the set $S\subset E$. For a positive 
integer $k$ we are going to count lattice points in the Minkowski sum 
\[\conv B_{\mathscr H}+k\nabla_E.\]
First of all, each of these is of the form $\mathbf f+\mathbf v$, 
where $\mathbf f$ is a hypertree and $\mathbf v$ is an integer vector in $k\nabla_E$, i.e., a vector with non-positive integer entries whose sum is $-k$. 
This follows from \cite[Corollary 46.2c]{sch} because $\conv B_{\mathscr H}$ is the base polytope of an integer polymatroid (by Lemma \ref{lem:ineq} and the submodularity of $\mu$), and $\nabla_E$ is a translate of another such polytope by the integer vector $-\mathbf i_E$: indeed $\nabla_E$ corresponds to the $(|E|-1)$-uniform matroid on the set $E$. 

After fixing an order on $E$, we partition our lattice points $\mathbf f+\mathbf v$ according to the lowest possible $\mathbf f\in B_{\mathscr H}$ in the following version of the lexicographic order: 
\begin{align}
\label{eq:order}
&\mathbf f_1\text{ is smaller than }\mathbf f_2\text{ if they are different and if for the smallest}\\
\nonumber
&\text{element }e\text{ of }E\text{ with }\mathbf f_1(e)\ne\mathbf f_2(e)\text{, we have }\mathbf f_1(e)>\mathbf f_2(e)
\end{align}
(this is rather natural because the sum of the entries in $\mathbf f$ is fixed). For a hypertree~$\mathbf f$, let $P_{\mathbf f}$ be the corresponding set of the partition. 

We claim that $P_{\mathbf f}$ is the set of lattice points in 
a(n inverted) simplex of sidelength $k$ and dimension $\iota(\mathbf f)-1=|E|-1-\bar\iota(\mathbf f)$. More precisely, if we let
\[\nabla_{\mathbf f}=\conv\{\,-\mathbf i_{\{e\}}\mid e\in E\text{ is internally active with respect to }\mathbf f\,\},\]
then we have
\begin{equation}\label{eq:partition}
P_{\mathbf f}=\mathbf f+(k\nabla_{\mathbf f}\cap\Z^E).
\end{equation}
As this is the key idea of the proof we included Figure \ref{fig:minkowski} to illustrate it. The solid black dots of the figure represent the hypertrees of some hypergraph $\mathscr H=(V,E)$ of three hyperedges $e_0$, $e_1$, $e_2$. They are included in the regular triangle with vertices $(|V|-1)\mathbf i_{\{e_i\}}$, $i=0,1,2$, although we changed those labels to $e_i$ for simplicity (for the example we let $|V|=17$ but that is not important). Instead of $\conv B_{\mathscr H}+k\nabla_E$, we constructed its integer translate $\conv B_{\mathscr H}+k(\nabla_E+\mathbf i_{\{e_0\}})$ (using $k=5$) which is in the same (hyper)plane as $B_{\mathscr H}$. Figure \ref{fig:minkowski} also shows the partition that corresponds to the order $e_0<e_1<e_2$. It consists of the lattice points in
\begin{itemize}
\item a single triangle shaded grey, one of whose vertices is the unique hypertree with $\bar\iota=0$;
\item one line segment for each hypertree with $\bar\iota=1$, which are the hypertrees represented by the medium-sized black dots;
\item a singleton for each hypertree with $\bar\iota=2$.
\end{itemize}

\begin{figure}[htbp] 
\labellist
\small
\pinlabel $e_0$ at 60 150
\pinlabel $e_1$ at 680 150
\pinlabel $e_2$ at 370 680
\endlabellist
   \centering
   \includegraphics[width=2.5in]{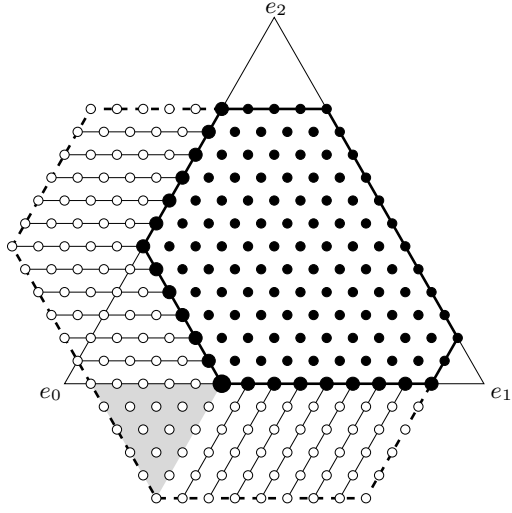} 
   \caption{Minkowski sum of a hypertree polytope and an inverted simplex.}
   \label{fig:minkowski}
\end{figure}

To prove \eqref{eq:partition}, we establish the two-way inclusion as follows.

Proof that $\mathbf f+(k\nabla_{\mathbf f}\cap\Z^E)\subset P_{\mathbf f}$. Assume that $\mathbf u\in\mathbf f+(k\nabla_{\mathbf f}\cap\Z^E)$ is also an element of $\mathbf g+(k\nabla_E\cap\Z^E)$ for some hypertree $\mathbf g$. We claim that $\mathbf f\le\mathbf g$ lexicographically. Suppose that $\mathbf f\ne\mathbf g$ and let $e$ denote the smallest hyperedge where they differ. Let $A$ be the set of hyperedges that are larger than $e$ and internally active with respect to $\mathbf f$. Since $\mathbf f$ is such that no element of $A$ can transfer valence to $e$, for all $a\in A$ there exists a set $T_a$ of hyperedges that is tight at $\mathbf f$, contains $e$, and does not contain $a$. 
Put $T=\cap_{a\in A}T_a$ if $A\ne\varnothing$ and $T=E$ otherwise. Then the set $T$ is tight at $\mathbf f$ (by Lemma \ref{lem:tight}) and contains $e$. Hence if we had $\mathbf g(e)>\mathbf f(e)$ then $T$ would need to have an element $e'$ so that $\mathbf g(e')<\mathbf f(e')$. Because of the way we chose $e$, this $e'$ would obviously satisfy $e<e'$ and such elements of $T$ are internally inactive with respect to $\mathbf f$. Now from $\mathbf u\in\mathbf f+(k\nabla_{\mathbf f}\cap\Z^E)$ it follows that $\mathbf u(e')=\mathbf f(e')$; on the other hand $\mathbf u\in\mathbf g+(k\nabla_E\cap\Z^E)$ implies $\mathbf g(e')\ge\mathbf u(e')=\mathbf f(e')$, which is a contradiction. Therefore $\mathbf g(e)<\mathbf f(e)$ and thus $\mathbf f<\mathbf g$ as claimed.

Proof that $P_{\mathbf f}\subset\mathbf f+(k\nabla_{\mathbf f}\cap\Z^E)$. Assume the contrary, namely that a vector $\mathbf u\in P_{\mathbf f}$ exists so that $\mathbf u(e)<\mathbf f(e)$ for a hyperedge $e$ that is internally inactive with respect to $\mathbf f$. Let $\mathbf g$ be a hypertree that results from $\mathbf f$ by a single transfer of valence from $e$ to a smaller hyperedge. Then $\mathbf g<\mathbf f$ lexicographically and $\mathbf u\in\mathbf g+(k\nabla_E\cap\Z^E)$ since each component of $\mathbf u$ is bounded above by the corresponding component of $\mathbf g$. This contradicts the definition of $P_{\mathbf f}$ and thus concludes the proof of \eqref{eq:partition}. 

From \eqref{eq:partition} it follows that if the interior polynomial, using the given order, of $\mathscr H$ is $I_{\mathscr H}(\xi)=a_0+a_1\xi+a_2\xi^2+\cdots$, then for all $k$, the polytope $(\conv B_{\mathscr H})+k\nabla_E$ contains
\[a_0{k+|E|-1 \choose |E|-1}+a_1{k+|E|-2 \choose |E|-2}+a_2{k+|E|-3 \choose |E|-3}+\cdots\]
lattice points. Since the binomial coefficients $\{\,{k+n \choose n}\mid n=0,1,2,\ldots\,\}$ constitute a basis of the polynomial ring $\Q[k]$ (this is obvious from the fact that the degree of ${k+n \choose n}$ is $n$) and $(\conv B_{\mathscr H})+k\nabla_E$ does not depend on the order, the order-independence of $a_0,a_1,a_2,\ldots$ follows.

There is a similar proof for the order-independence of the exterior polynomial \cite{hypertutte} using the standard simplex $\Delta_E=-\nabla_E$ instead of $\nabla_E$. Both arguments extend to the case of integer polymatroids without difficulty to yield the following result.

\begin{thm}\label{thm:mink}
Let $S$ be a finite ground set and $\mu\colon\mathscr P(S)\to\Z$ a submodular and non-decreasing function on its power set. Let 
\[P_\mu=\{\,\mathbf x\in\R^S\mid \mathbf x\ge\mathbf0\text{ and }\mathbf x\cdot\mathbf i_{U}\le\mu(U)\text{ for all }U\subset S\,\}\]
be the polymatroid associated to $\mu$ and $B_\mu=\{\,\mathbf x\in P_\mu\mid\mathbf x\cdot\mathbf i_S=\mu(S)\,\}$ its base polytope. Then the interior and exterior polynomials $I_\mu$ and $X_\mu$ \cite{hypertutte}, in the standard basis $1,\id,\id^2,\ldots,\id^{|S|-1}$, have the same coefficients as the polynomials (in $k$)
\[\left|(B_\mu+k\nabla_S)\cap\Z^S\right| \quad \text{and} \quad \left|(B_\mu+k\Delta_S)\cap\Z^S\right|,\]
respectively, in the basis ${k+|S|-1 \choose |S|-1},\ldots,{k+2 \choose 2},k+1,1$.
\end{thm}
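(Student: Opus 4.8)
The plan is to show that the argument already carried out for hypergraphs in the preceding subsection goes through verbatim in the polymatroid setting, and then to translate the resulting binomial-coefficient identity into the language of $h$-vectors / bases. First I would observe that every structural ingredient used above is available for an arbitrary submodular non-decreasing $\mu\colon\mathscr P(S)\to\Z$: the base polytope $B_\mu$ is an integer polymatroid base polytope, $\nabla_S$ (resp. $\Delta_S$) is an integer translate of the base polytope of the $(|S|-1)$-uniform matroid on $S$, so by \cite[Corollary 46.2c]{sch} the Minkowski sum $B_\mu+k\nabla_S$ has as its lattice points exactly the vectors $\mathbf f+\mathbf v$ with $\mathbf f\in B_\mu\cap\Z^S$ and $\mathbf v\in k\nabla_S\cap\Z^S$. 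The notions of tightness and the submodular intersection lemma (Lemma \ref{lem:tight}, via \cite[Theorem 44.2]{sch}) are stated for general submodular functions; the definitions of internal activity and of $\iota(\mathbf f)$, $\bar\iota(\mathbf f)$ make sense for any $B_\mu$, since they only refer to transfers of valence between ground-set elements. Thus $I_\mu$ and $X_\mu$ are well-defined by the same token.

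Next I would rerun the partition argument. Fix a linear order on $S$ and partition the lattice points of $B_\mu+k\nabla_S$ according to the lexicographically lowest $\mathbf f\in B_\mu\cap\Z^S$ (in the sense of \eqref{eq:order}) that can serve as the "base point" $\mathbf f$ in a decomposition $\mathbf f+\mathbf v$. For each such $\mathbf f$ let $\nabla_{\mathbf f}=\conv\{\,-\mathbf i_{\{s\}}\mid s\in S\text{ internally active with respect to }\mathbf f\,\}$. The claim to prove is the exact analogue of \eqref{eq:partition}: $P_{\mathbf f}=\mathbf f+(k\nabla_{\mathbf f}\cap\Z^S)$. Both inclusions are proved word for word as above. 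For $\supseteq$: if $\mathbf u\in\mathbf f+(k\nabla_{\mathbf f}\cap\Z^S)$ also lies in $\mathbf g+(k\nabla_S\cap\Z^S)$, form $T=\bigcap_{a\in A}T_a$ where $A$ is the set of elements larger than $e$ (the first place $\mathbf f,\mathbf g$ differ) that are internally active with respect to $\mathbf f$, and each $T_a$ is a tight set containing $e$ but not $a$; $T$ is tight by Lemma \ref{lem:tight}, and the same contradiction as before forces $\mathbf g(e)<\mathbf f(e)$, hence $\mathbf f<\mathbf g$. For $\subseteq$: if some $\mathbf u\in P_{\mathbf f}$ had $\mathbf u(s)<\mathbf f(s)$ for an internally inactive $s$, a single transfer of valence from $s$ to a smaller element produces $\mathbf g<\mathbf f$ with $\mathbf u\in\mathbf g+(k\nabla_S\cap\Z^S)$, contradicting minimality. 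Summing $|P_{\mathbf f}|={k+\dim\nabla_{\mathbf f} \choose \dim\nabla_{\mathbf f}}={k+\iota(\mathbf f)-1 \choose \iota(\mathbf f)-1}$ over all $\mathbf f$ and grouping by the value $\bar\iota(\mathbf f)=|S|-\iota(\mathbf f)$ gives
\[
\left|(B_\mu+k\nabla_S)\cap\Z^S\right|=\sum_{j\ge0}a_j{k+|S|-1-j \choose |S|-1-j},
\]
where $a_j$ is the coefficient of $\xi^j$ in $I_\mu(\xi)=\sum_{\mathbf f\in B_\mu}\xi^{\bar\iota(\mathbf f)}$. Since the degree of ${k+|S|-1-j \choose |S|-1-j}$ in $k$ is exactly $|S|-1-j$, the family $\{\,{k+|S|-1 \choose |S|-1},\ldots,k+1,1\,\}$ is a basis of $\Q[k]$, so reading off $\left|(B_\mu+k\nabla_S)\cap\Z^S\right|$ in that basis returns precisely the coefficients $a_0,a_1,\ldots$ of $I_\mu$ in the standard basis $1,\id,\id^2,\ldots$ — and in particular this count is independent of the chosen order, re-proving order-independence of $I_\mu$. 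The statement for $X_\mu$ and $\Delta_S=-\nabla_S$ follows by the mirror-image argument (replace "smaller" by "larger" throughout, using external activity in place of internal).

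The main obstacle, such as it is, is bookkeeping rather than mathematics: one must check that the two auxiliary facts quoted from \cite{sch} really do apply at the stated level of generality — specifically that $\nabla_S$ is a translate of the base polytope of the corank-one uniform matroid and that Minkowski sums of integer polymatroid base polytopes have the "coordinatewise split" property for lattice points — and that the internal-activity/tightness machinery of Section \ref{sec:interior}, which was phrased for $\mathscr H=(V,E)$, uses only the submodular function $\mu$ and never the graph $\bip\mathscr H$ itself. Both are routine; once they are in place the proof is a transcription of the hypergraph case with $E$ replaced by $S$.
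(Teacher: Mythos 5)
Your proposal is correct and follows essentially the same route as the paper: the paper proves the hypergraph case via the partition \eqref{eq:partition} and then states that this argument (and its mirror image with $\Delta_S$ in place of $\nabla_S$) ``extends to the case of integer polymatroids without difficulty,'' which is exactly the transcription you carry out. Your checks that \cite[Corollary 46.2c]{sch}, the tight-set lemma, and the activity definitions depend only on the submodular function $\mu$ are precisely the routine verifications the paper leaves implicit.
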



Theorem \ref{thm:mink} offers new interpretations of the specializations $T(x,1)$ and $T(1,y)$ of the Tutte polynomial of a matroid (graph) in terms of lattice point counts in Minkowski sums of the base polytope (spanning tree polytope) and a simplex. Interestingly though, this does not lead to a new interpretation of the two-variable polynomial $T(x,y)$. Tempting as it may be to count lattice points in the base polytope plus $k$ times the inverted simplex plus $l$ times the standard simplex, the resulting two-variable polynomial (when evaluated for a matroid/graph) will be 
fundamentally different from (less subtle than) $T(x,y)$.


\section{The root polytope}\label{sec:root}

In this section we recall the general notions of $f$-vector and $h$-vector and discuss the root polytope of a bipartite graph. Most of the results 
have also appeared in \cite{alex} but some, notably Proposition \ref{pro:facet} and Theorem \ref{thm:h=E}, are new.

\subsection{Triangulations}

A \emph{triangulation} of a polytope $Q$ is a collection of maximal simplices, each spanned by the vertices of $Q$, so that each two intersect in a common face and their union is $Q$. A triangulation is an instance of a pure simplicial complex, i.e., one in which all maximal simplices have the same dimension. To a $d$-dimensional simplicial complex, it is customary to associate the \emph{$f$-vector}
\begin{equation}\label{eq:f}
f(y)=y^{d+1}+f_0\,y^{d}+f_1\,y^{d-1}+\cdots+f_{d-2}\,y^2+f_{d-1}\,y+f_d,
\end{equation}
where $f_k$, for $k\ge0$, is the number of $k$-dimensional simplices in the complex. The \emph{$h$-vector} of the same complex is defined as $h(x)=f(x-1)$. The latter notion becomes significant (for example, it has positive coefficients)  for so-called \emph{shellable complexes}, that is complexes with a shelling order. Here a \emph{shelling order} of a pure simplicial complex, $\sigma_1<\sigma_2<\cdots<\sigma_{f_d}$, lists the maximal simplices in such a way that each $\sigma_i$, $i\ge1$, intersects the set~$\sigma_1\cup\cdots\cup\sigma_{i-1}$ in a union of $c_i$ codimension one faces. We always have $c_1=0$ but assume as part of the definition that $c_i\ge1$ for $i\ge2$. Whether such an order exists is a subtle question, but when it does, it is not hard to show \cite{swartz} that 
\begin{equation}\label{eq:ftoh}
h(x)=f(x-1)=\sum_{i=1}^{f_d}x^{d+1-c_i}.
\end{equation}

\begin{Def}
Let $G$ be a 
bipartite graph with color classes $E$ and $V$. In the space $\R^E\oplus\R^V$ we consider all vectors of the form $\mathbf i_{\{e\}}+\mathbf i_{\{v\}}$, where $e\in E$, $v\in V$, and $ev$ is an edge in $G$. We denote their convex hull by $Q_G$ and call 
it the \emph{root polytope} of $G$. 
\end{Def}

The sum of the coordinates for each vertex of $Q_G$ is $2$. Furthermore, the sum of the $E$-coordinates equals that of the $V$-coordinates. When $G$ is connected there are no more affine relations and the dimension of $Q_G$ is $|E|+|V|-2$.

In \cite{alex} the vertices of $Q_G$ are given in the form $\mathbf i_{\{e\}}-\mathbf i_{\{v\}}$. The two versions are isometric via multiplying each $V$-coordinate by $-1$.

\begin{ex}\label{ex:k23}
The root polytope of the complete bipartite graph with color classes $E$ and $V$ is, by definition, the direct product of the unit simplices $\Delta_E$ and $\Delta_V$. In Figure \ref{fig:k23} we picture it in the case when $|E|=3$ and $|V|=2$, mainly to set notation for Example \ref{ex:triang}. Note that we replaced the symbols $\mathbf i_{\{x\}}$ with $\mathbf x$ for better readability.
\begin{figure}[h!] 
\labellist
\tiny
\pinlabel $v_0$ at 280 70
\pinlabel $v_1$ at 280 582
\pinlabel $e_0$ at 25 326
\pinlabel $e_1$ at 280 326
\pinlabel $e_2$ at 535 326
\pinlabel ${\mathbf e}_0+{\mathbf v}_0$ at 810 170
\pinlabel ${\mathbf e}_0+{\mathbf v}_1$ at 810 490
\pinlabel ${\mathbf e}_1+{\mathbf v}_0$ at 1470 0
\pinlabel ${\mathbf e}_1+{\mathbf v}_1$ at 1470 320
\pinlabel ${\mathbf e}_2+{\mathbf v}_0$ at 1150 330
\pinlabel ${\mathbf e}_2+{\mathbf v}_1$ at 1150 650
\endlabellist
   \includegraphics[width=3in]{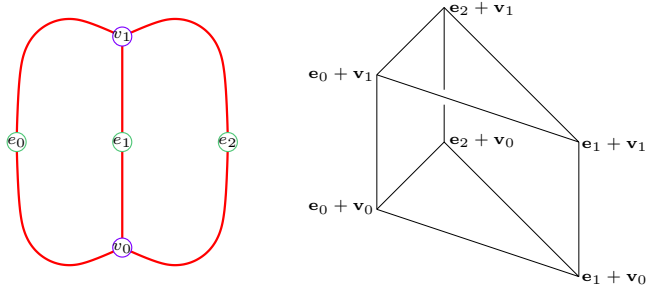} 
   \caption{The complete bipartite graph $K_{2,3}$ and its root polytope.}
   \label{fig:k23}
\end{figure}
\end{ex}

The vertices of $Q_G$ obviously correspond to edges of $G$; it is also not hard to verify \cite[Lemma~12.5]{alex} that a set of vertices is affinely independent if and only if the corresponding set of edges is cycle-free. In other words, simplices in $Q_G$ (spanned by vertices) correspond to forests in $G$. In particular, maximal simplices in $Q_G$ are in a one-to-one correspondence with spanning trees in $G$. Next we establish an elementary property of these simplices.


\begin{lemma}\label{lem:unimod}
If the vertices of the simplex $\sigma$ are the vertices $\mathbf v_1,\ldots,\mathbf v_m$ of $Q_G$, then for each positive integer $s$, the set of integer points in $s\cdot\sigma$ agrees with the set
\[A=\{\,\lambda_1\mathbf v_1+\cdots+\lambda_m\mathbf v_m \mid \lambda_1,\ldots,\lambda_m\in\N, \lambda_1+\cdots+\lambda_m=s\,\}.\]
\end{lemma}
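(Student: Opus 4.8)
The inclusion $A \subseteq (s\cdot\sigma)\cap\Z^E\oplus\Z^V$ is immediate: each $\mathbf v_j$ is an integer vector, so any $\N$-combination of them with coefficients summing to $s$ is an integer point of $s\cdot\sigma$. The content is the reverse inclusion, and the plan is to exploit the fact that the vertices of $\sigma$ correspond to a forest (in fact, since $\sigma$ is spanned by vertices of $Q_G$, to a cycle-free subgraph) of $G$, which forces the vertex matrix to be totally unimodular. First I would recall that any integer point $\mathbf p\in s\cdot\sigma$ can be written uniquely as $\mathbf p=\sum_{j=1}^m\lambda_j\mathbf v_j$ with $\lambda_j\in\R_{\ge0}$ and $\sum\lambda_j=s$, because $\mathbf v_1,\dots,\mathbf v_m$ are affinely independent (being vertices of a simplex). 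So the task reduces to showing these barycentric coordinates $\lambda_j$ are in fact integers.

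The key step is to produce these $\lambda_j$ explicitly from $\mathbf p$ by an integral linear procedure. Since the edges corresponding to $\mathbf v_1,\dots,\mathbf v_m$ form a forest $F$ in $G$, I would argue by induction on $m$, peeling off a leaf. A forest with at least one edge has a vertex $w$ (in $E\cup V$) of degree $1$ in $F$; say $w$ is the endpoint of the single edge encoded by $\mathbf v_1$, so $\mathbf v_1$ is the only vertex of $\sigma$ whose $w$-coordinate is nonzero (equal to $1$). Then the $w$-coordinate of $\mathbf p=\sum\lambda_j\mathbf v_j$ equals $\lambda_1$, which is therefore a non-negative integer. Now set $\mathbf p'=\mathbf p-\lambda_1\mathbf v_1$: this is an integer vector lying in $(s-\lambda_1)\cdot\sigma'$, where $\sigma'$ is the simplex on $\mathbf v_2,\dots,\mathbf v_m$, corresponding to the forest $F\setminus\{\text{that edge}\}$ on one fewer edge. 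Applying the inductive hypothesis to $\sigma'$ and the integer $s-\lambda_1\ge 0$ (the case $s-\lambda_1=0$ forcing $\mathbf p'=\mathbf 0$ and all remaining $\lambda_j=0$), we get $\lambda_2,\dots,\lambda_m\in\N$ with $\lambda_2+\dots+\lambda_m=s-\lambda_1$, completing the induction. The base case $m=1$ is trivial: $s\cdot\sigma=\{s\mathbf v_1\}$.

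The main obstacle, such as it is, is just making the leaf-peeling argument clean: one must check that a leaf always exists (a forest with at least one edge has a degree-$1$ vertex, standard), that the coordinate of $\mathbf p$ at that leaf vertex really does isolate a single $\lambda_j$ (immediate from the structure of the vertices $\mathbf i_{\{e\}}+\mathbf i_{\{v\}}$ of $Q_G$), and that subtracting $\lambda_1\mathbf v_1$ genuinely lands in the smaller simplex scaled by $s-\lambda_1$ — which follows because the $\lambda_j$ are the unique barycentric coordinates. Alternatively, and perhaps more slickly, one can invoke total unimodularity directly: the $(|E|+|V|)\times m$ matrix whose columns are $\mathbf v_1,\dots,\mathbf v_m$ is a submatrix of the (signed) incidence matrix of the bipartite graph $G$, hence totally unimodular; augmenting with the row of all $1$'s (the common coordinate sum) keeps the relevant square subdeterminant equal to $\pm1$ by the affine-independence, so Cramer's rule expresses each $\lambda_j$ as an integer combination of the (integer) coordinates of $\mathbf p$ and of $s$. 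I would likely present the inductive version as the main line since it is self-contained and uses only the forest structure already invoked from \cite[Lemma~12.5]{alex}, mentioning the unimodularity viewpoint as a remark.
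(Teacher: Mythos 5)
Your proof is correct and follows essentially the same route as the paper: the paper also reduces to showing the barycentric coordinates are integers and solves the vertex–edge system over the forest by repeatedly peeling off a degree-one vertex, which is exactly your leaf-induction. The total-unimodularity remark is a fine alternative but not needed.
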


\begin{proof}
Each point of $s\cdot\sigma$ can be uniquely written as $\sum_{i=1}^m\lambda_i\mathbf v_i$, where the $\lambda_i$ are non-negative reals summing to $s$. Since the $\mathbf v_i$ are integer vectors, the relation $A\subset(s\cdot\sigma)\cap(\Z^E\oplus\Z^V)$ is clear. Conversely, let $\mathbf p$ be an integer point in $s\cdot\sigma$. We need to check that the unique solution (in $\lambda_1,\ldots,\lambda_m$) of $\sum_{i=1}^m\lambda_i\mathbf v_i=\mathbf p$ is integer.

In terms of the forest $\Sigma$ that corresponds to $\sigma$, the unknowns $\lambda_i$ are associated to the edges of $\Sigma$ and the coordinates of $\mathbf p$ are associated to the vertices. The condition is that the value at each vertex be the sum of the values on the adjacent edges. Now this system of equations is in `triangular form': $\Sigma$ has at least one degree $1$ vertex and the corresponding equation forces the unknown on the adjacent edge to take an integer value. Then we can remove this vertex-edge pair and look for another degree $1$ vertex. This way a simple inductive argument can be constructed which shows that if the system of equations has a solution (which we assumed) then that solution needs to be integer.
\end{proof}

As to the relative position of two maximal simplices in $Q_G$, we recall the following basic observation.

\begin{lemma}[{\cite[Lemma 12.6]{alex}}]\label{lem:compa}
Let $\Gamma_1$ and $\Gamma_2$ be spanning trees in $G$. The following two statements are equivalent.
\begin{enumerate}
\item\label{odin} The simplices in $Q_G$ that correspond to the $\Gamma_i$ intersect in a common face.
\item\label{dva} There does not exist a cycle $\varepsilon_1,\varepsilon_2,\ldots,\varepsilon_{2k}$ of edges in $G$, where $k\ge2$, so that all odd-index edges are from $\Gamma_1$ and all even-index edges are from $\Gamma_2$.
\end{enumerate}
\end{lemma}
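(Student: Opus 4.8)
The plan is to rephrase condition~\eqref{odin} in terms of barycentric coordinates and then, when it fails, to read the cycle of condition~\eqref{dva} directly off a witness point. For an edge $a$ of $G$ with endpoints $e\in E$ and $v\in V$, write $w_a=\mathbf i_{\{e\}}+\mathbf i_{\{v\}}\in\R^E\oplus\R^V$ for the corresponding vertex of $Q_G$. By \cite[Lemma 12.5]{alex} the points $\{\,w_a\mid a\in F\,\}$ are affinely independent whenever $F$ is a forest, so the maximal faces $\sigma_1,\sigma_2$ of $Q_G$ attached to the spanning trees $\Gamma_1,\Gamma_2$ are genuine simplices and each $p\in\sigma_i$ has a unique representation $p=\sum_{a\in\Gamma_i}\lambda^{(i)}_a w_a$ with $\lambda^{(i)}_a\ge0$ summing to $1$. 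Setting $S_i(p)=\{\,a\in\Gamma_i\mid\lambda^{(i)}_a>0\,\}$, the smallest face of $\sigma_i$ through $p$ is $\conv\{\,w_a\mid a\in S_i(p)\,\}$. I will use the elementary fact that two simplices on affinely independent vertex sets meet in a common face exactly when $S_1(p)=S_2(p)$ for every $p$ in their intersection (if these supports always agree, the intersection is the face spanned by a support of maximal size, since the support of a convex combination is the union of the supports; the converse is immediate). Because the vertices of $\sigma_i$ are indexed by $\Gamma_i$ and $a\mapsto w_a$ is injective, any common face of $\sigma_1,\sigma_2$ can only be spanned by $w_a$'s with $a\in\Gamma_1\cap\Gamma_2$.

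For $\eqref{odin}\Rightarrow\eqref{dva}$ I argue contrapositively: given a cycle $\varepsilon_1,\dots,\varepsilon_{2k}$ as in \eqref{dva}, I take $p=\tfrac1k\sum_{i\text{ odd}}w_{\varepsilon_i}$. Each vertex of $G$ on the cycle meets exactly two of its edges, which are consecutive and hence carry one odd and one even index; comparing coordinates then gives $\sum_{i\text{ odd}}w_{\varepsilon_i}=\sum_{i\text{ even}}w_{\varepsilon_i}$, so $p\in\sigma_1\cap\sigma_2$ with $S_1(p)=\{\,\varepsilon_i\mid i\text{ odd}\,\}$ and $S_2(p)=\{\,\varepsilon_i\mid i\text{ even}\,\}$. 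If \eqref{odin} held, then $\sigma_1\cap\sigma_2$ would be a common face containing $p$, hence containing every $w_{\varepsilon_i}$; together with the last sentence of the previous paragraph this would force every $\varepsilon_i$ into $\Gamma_1\cap\Gamma_2\subseteq\Gamma_1$, contradicting that $\Gamma_1$ is a tree. For the opposite implication I again go contrapositively: if \eqref{odin} fails, pick $p\in\sigma_1\cap\sigma_2$ with $S_1(p)\ne S_2(p)$ and set $c_a=\lambda^{(1)}_a-\lambda^{(2)}_a$ (with $\lambda^{(i)}_a:=0$ off $\Gamma_i$). Then $c$ is nonzero, supported on $\Gamma_1\cup\Gamma_2$, and satisfies $\sum_a c_a w_a=0$ and $\sum_a c_a=0$; its positive support $P=\{\,a\mid c_a>0\,\}$ lies in $\Gamma_1$ (since $c_a>0$ forces $\lambda^{(1)}_a>0$), its negative support $N=\{\,a\mid c_a<0\,\}$ lies in $\Gamma_2$, the two are disjoint, and both are nonempty (as $c\ne0$ while $\sum_a c_a=0$). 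Evaluating $\sum_a c_a w_a=0$ at a vertex of $G$ that meets $P\cup N$ shows that this vertex meets an edge of $P$ and an edge of $N$.

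So everything reduces to the following claim, which I expect to be the only genuinely nontrivial point, the rest being bookkeeping: \emph{if the edges of a bipartite graph $H$ are partitioned into two classes such that every vertex is incident to an edge of each class, then $H$ has a cycle whose edges alternate between the two classes} (necessarily of length at least four). Applying this to the subgraph of $G$ on the edge set $P\cup N$ produces a cycle alternating between $P\subseteq\Gamma_1$ and $N\subseteq\Gamma_2$, i.e.\ a configuration forbidden by \eqref{dva}, and the proof is complete. To prove the claim, grow a walk from an arbitrary vertex, always leaving a vertex along an edge of the class opposite to the one just used; this never stalls because the two classes are disjoint and both are present at every vertex. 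At the first vertex reached a second time, the portion of the walk between the two visits is a simple cycle $C$ whose consecutive edges lie in opposite classes, except possibly for the two edges at that vertex. Were those two in the same class, $C$ would have an odd number of edges, which is impossible since $H$ is a subgraph of the bipartite graph $G$; hence $C$ alternates, and being a cycle in a bipartite graph it has at least four edges.
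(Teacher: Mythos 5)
Your proof is correct. Note that the paper does not actually prove this lemma --- it is quoted verbatim from Postnikov's paper (\cite[Lemma 12.6]{alex}) --- so there is no in-text argument to compare against; your write-up supplies a complete, self-contained proof in the same spirit as the cited source. The three ingredients all check out: the reformulation of \eqref{odin} as ``the barycentric supports $S_1(p)$ and $S_2(p)$ agree for every $p$ in the intersection'' (your maximal-support/midpoint argument is the right justification, and the converse direction is indeed immediate from uniqueness of barycentric coordinates on a common face); the point $\frac1k\sum_{i\ \mathrm{odd}}w_{\varepsilon_i}=\frac1k\sum_{i\ \mathrm{even}}w_{\varepsilon_i}$ witnessing failure of \eqref{odin} when an alternating cycle exists (each cycle vertex lies on exactly one odd- and one even-indexed edge, and placing all $2k$ cycle edges inside a common face would force a cycle inside the tree $\Gamma_1$); and, conversely, the affine relation $\sum_a c_a w_a=0$, $\sum_a c_a=0$ with positive support in $\Gamma_1$ and negative support in $\Gamma_2$, from which the alternating-walk extraction produces a simple alternating cycle, bipartiteness ruling out the one possible defect (two consecutive edges of the same class at the closing vertex) via the parity of the cycle length and guaranteeing $k\ge2$.
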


If two maximal simplices (spanning trees) satisfy the equivalent conditions of Lemma~\ref{lem:compa} then we call them \emph{compatible}. A triangulation of $Q_G$ is then a collection of pairwise compatible maximal simplices whose union is $Q_G$. Since all maximal simplices have the same volume \cite[Lemma 12.5]{alex}, each triangulation of $Q_G$ consists of the same number of them. Theorem \ref{thm:h=E} below shows that the triangulations have much more in common.

\subsection{Facets}

Our next goal is to describe the root polytope by a system of linear inequalities. In other words, we shall discuss the facets (codimension one strata of the boundary) of $Q_G$. These turn out to be in a one-to-one correspondence with certain cuts in $G$, as follows. 
A \emph{cut} in a graph is a set of edges that is obtained by splitting the set of vertices into the disjoint union of two subsets, and then taking the set of edges between the two subsets. A non-empty cut is \emph{minimal} if it does not contain any other non-empty cuts. For example, all \emph{star-cuts} (when the vertex set is split into a non-isolated singleton and the rest) are minimal.

In a bipartite graph $G$ with color classes $E$ and $V$ we may speak of \emph{directed cuts}, that is cuts which arise from a splitting $V\cup E=S\cup T$ so that each edge in the cut is adjacent to $S$ at an element of $V$ (and then to $T$ at an element of $E$). 
In other words, $G$ has no edges between $E\cap S$ and $V\cap T$. 
For instance, star-cuts in a bipartite graph are directed.

\begin{lemma}\label{lem:support}
Let the splitting $V\cup E=S\cup T$ induce the non-empty directed cut $C$ in the bipartite graph $G$. Then the root polytope $Q_G$ has a supporting hyperplane which does not contain any of the vertices corresponding to elements of $C$ but contains all other vertices.
\end{lemma}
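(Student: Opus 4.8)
The plan is to write the hyperplane down explicitly as a level set of a $0/1$ linear functional read directly off the splitting. Define $\varphi\in(\R^E\oplus\R^V)^*$ by
\[
\varphi(\mathbf x)=\sum_{e\in E\cap S}x_e+\sum_{v\in V\cap T}x_v,
\]
i.e.\ $\varphi$ restricts to the indicator vector $\mathbf i_{E\cap S}$ on the $\R^E$-block and to $\mathbf i_{V\cap T}$ on the $\R^V$-block. I claim the affine hyperplane $H=\{\mathbf x:\varphi(\mathbf x)=1\}$ is the one we want.

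The next step is to evaluate $\varphi$ on the vertices of $Q_G$. Every vertex has the form $\mathbf i_{\{e\}}+\mathbf i_{\{v\}}$ for an edge $ev$ of $G$, and $\varphi(\mathbf i_{\{e\}}+\mathbf i_{\{v\}})$ is simply the number of the two conditions ``$e\in S$'' and ``$v\in T$'' that hold. A priori there are four cases according to the sides of $e$ and $v$; but the hypothesis that the cut $C$ is \emph{directed}, i.e.\ that $G$ has no edge between $E\cap S$ and $V\cap T$, excludes the case $e\in S$, $v\in T$. In each of the three surviving cases: if $e\in S$ and $v\in S$ then $\varphi=1$; if $e\in T$ and $v\in T$ then $\varphi=1$; and if $e\in T$ and $v\in S$, which is exactly the condition $ev\in C$, then $\varphi=0$. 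Hence $\varphi\equiv1$ on the vertices of $Q_G$ corresponding to edges outside $C$, and $\varphi\equiv0$ on those corresponding to edges of $C$.

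It remains to assemble the conclusion. Since $\varphi$ is affine and $\varphi\le1$ at every vertex of $Q_G$, we get $\varphi\le1$ on all of $Q_G=\conv(\text{its vertices})$, so $H$ is a supporting hyperplane: it meets $Q_G$ in the face $H\cap Q_G$, which by the computation above contains precisely the vertices lying outside $C$ (and these exist, so the face is nonempty, as long as $C\neq E(G)$). Thus $H$ contains every vertex of $Q_G$ not corresponding to an element of $C$ and none of those that do, which is the assertion. (If one prefers the complementary normalization, the functional $\mathbf i_{E\cap T}$ on $\R^E$ together with $\mathbf i_{V\cap S}$ on $\R^V$ takes the value $1$ off $C$ and $2$ on $C$, giving the same hyperplane from the other side.)

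I do not anticipate a real obstacle: the proof is a one-line construction followed by a four-case check whose only nontrivial input is the directedness hypothesis. The single point deserving a remark is the degenerate situation $C=E(G)$ — equivalently, when the splitting $V\cup E=S\cup T$ is exactly the partition into the two color classes — in which $\varphi$ vanishes identically on $Q_G$ and no hyperplane with the stated separation property can exist; this case is tacitly excluded, and in any event does not arise in the subsequent description of the facets of $Q_G$.
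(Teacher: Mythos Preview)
Your proof is correct and follows essentially the same approach as the paper's: the paper defines the $\pm1$ functional $\lambda_C$ that is $1$ on $(S\cap V)\cup(T\cap E)$ and $-1$ on $(S\cap E)\cup(T\cap V)$, taking the hyperplane $\Pi_C=\ker\lambda_C$, whereas you use the $0/1$ functional $\varphi=\mathbf i_{(E\cap S)\cup(V\cap T)}$ and the level set $\{\varphi=1\}$. Since the coordinate sum equals $2$ on all of $Q_G$, one has $\lambda_C=2-2\varphi$ there, so the two hyperplanes coincide; your remark on the degenerate case $C=E(G)$ is a nice bit of extra care that the paper leaves implicit.
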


\begin{proof}
Assume that all the edges of $C$ are adjacent to $S$ at elements of $V$. The linear functional $\lambda_C\colon\R^E\oplus\R^V\to\R$ which is defined to take the value $1$ at $\mathbf i_{\{x\}}$ if $x\in(S\cap V)\cup(T\cap E)$ and the value $-1$ if $x\in(S\cap E)\cup(T\cap V)$ is such that its value at vertices of $Q_G$ corresponding to elements of $C$ is $2$, while at all other vertices $\lambda_C$ vanishes. Thus 
the hyperplane $\Pi_C=\ker\lambda_C$ 
has the required properties.
\end{proof}


%
%

\begin{prop}\label{pro:facet}
Let $G$ be a connected bipartite graph. 
For each minimal directed cut $C$ of $G$, the supporting hyperplane $\Pi_C$ of Lemma \ref{lem:support} intersects the root polytope~$Q_G$ in a facet. Furthermore, each facet of $Q_G$ arises this way.
\end{prop}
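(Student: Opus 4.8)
The plan is to prove two containments: every hyperplane $\Pi_C$ coming from a minimal directed cut $C$ cuts out a facet, and every facet arises in this way. Throughout I work with the description of $Q_G$ as the convex hull of the $\mathbf i_{\{e\}}+\mathbf i_{\{v\}}$ over edges $ev$, keeping in mind (from the discussion before Lemma \ref{lem:unimod}) that faces of $Q_G$ correspond to forests in $G$ and maximal simplices to spanning trees. The key numerical fact I want is: the face $F_C=\Pi_C\cap Q_G$ is spanned by the vertices corresponding to edges \emph{not} in $C$, so $\dim F_C$ equals one less than the number of connected components of the forest $G\setminus C$ (equivalently $|V|+|E|$ minus that number of components minus one). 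Since $\dim Q_G=|E|+|V|-2$, the face $F_C$ is a facet precisely when $G\setminus C$ has exactly two connected components. So the heart of the matter is the combinatorial claim: \emph{a directed cut $C$ is minimal if and only if removing its edges disconnects $G$ into exactly two pieces.}

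First I would establish that claim. Given the splitting $V\cup E=S\cup T$ defining the directed cut $C$ (with edges running from $V\cap S$ to $E\cap T$), note that after deleting $C$ there are no edges between $S$ and $T$ at all — indeed the only edges between the two sides were those of $C$, since `directed' means there are also no edges between $E\cap S$ and $V\cap T$. Hence $G\setminus C$ splits as a disjoint union of the induced subgraph on $S$ and the induced subgraph on $T$; write $a$ for the number of components on the $S$-side and $b$ for the number on the $T$-side, so $G\setminus C$ has $a+b$ components. If $a+b\ge 3$, say $S$ decomposes as $S_1\sqcup S_2$ with no edges of $G\setminus C$ between them, then the refined splitting $(S_1)\cup(S_2\cup T)$ still has no edges between $E\cap S_1$ and $V\cap(S_2\cup T)$ — because any such edge would have to lie in $C$, contradicting that its $V$-endpoint is in $S_1$ — so it defines a directed cut, and it is a proper non-empty sub-cut of $C$. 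Thus $C$ is not minimal. Conversely, if $a=b=1$ then $G\setminus C$ has exactly two components and any non-empty sub-cut $C'\subsetneq C$ would, upon deletion, disconnect $G$ into more pieces than $C$ does, i.e. $G\setminus C$ would be disconnected, which it is not beyond the two pieces it already has — more carefully: a proper sub-cut $C'$ still leaves at least one edge of $C$, so $G\setminus C'$ is connected, hence $C'$ cannot itself be a cut unless it is empty. This gives minimality. (Some care is needed to phrase the `directed' condition consistently under refinement; this bookkeeping is the one slightly fiddly point.)

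Next, using Lemma \ref{lem:support}, for a minimal directed cut $C$ the hyperplane $\Pi_C$ is supporting and $F_C$ contains exactly the vertices outside $C$; by the combinatorial claim $G\setminus C$ has two components, so those vertices span a face of dimension $|E|+|V|-3$, i.e. a facet. For the reverse inclusion, let $F$ be an arbitrary facet of $Q_G$, cut out by some supporting hyperplane $\Pi=\ker\lambda$ (normalized so $\lambda\le$ const on $Q_G$ with equality on $F$). The vertices on $F$ correspond to a sub-forest $F'$ of $G$; since $F$ is a facet, $F'$ must have exactly two components — by affine independence / dimension count this forces it — and since $F$ is spanned by vertices of $Q_G$, the complementary edge set $C=E(G)\setminus E(F')$ consists exactly of the edges joining the two components. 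I would then argue $C$ is a directed cut: orient the bipartition of the two vertex-parts of $G\setminus C$ so that across each edge of $C$ the value of $\lambda$ strictly exceeds the common value on $F$; checking the value of $\lambda$ at the relevant vertices forces the two $V$-endpoints of any two edges of $C$ to lie on a consistent side, which is exactly the directedness condition (and in fact pins down $\lambda$ up to scaling as the functional $\lambda_C$ of Lemma \ref{lem:support}). Minimality of $C$ then follows from the combinatorial claim since $G\setminus C$ has two components, and $\Pi=\Pi_C$.

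The main obstacle I anticipate is not any single hard step but the careful synchronization of three descriptions — the cut as a vertex-splitting $S\cup T$, the forest $G\setminus C$ and its component count, and the sign pattern of the functional $\lambda$ on the vertices of $Q_G$ — and in particular verifying that the `directed' hypothesis is exactly what is needed for $\lambda_C$ to be well-defined (single-valued) and, in the converse direction, that an arbitrary facet's normal functional necessarily has the $\pm1$ sign structure of some $\lambda_C$. Once the equivalence `minimal directed cut $\Leftrightarrow$ two-component complementary forest' is nailed down, both directions of the proposition are short dimension counts.
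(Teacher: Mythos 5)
The first half of your argument (a minimal directed cut yields a facet) is sound and is essentially the paper's: minimality forces both sides of the splitting to induce connected subgraphs, a two-component spanning forest then supplies $|E|+|V|-2$ affinely independent vertices lying on $\Pi_C$, and the dimension count finishes. The gap is in the converse. Your starting point, ``the vertices on $F$ correspond to a sub-forest $F'$ of $G$,'' is false: the set of \emph{all} vertices of $Q_G$ lying on a facet is in general not cycle-free (for $K_{3,3}$, the facet coming from the star-cut at a violet vertex contains the six vertices of a $K_{2,3}$, which has cycles). What is true, and what the paper uses, is that one may \emph{choose} $|E|+|V|-2$ affinely independent vertices of the facet, and these form a two-component spanning forest. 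But then your definition $C=E(G)\setminus E(F')$ is not a cut at all: the complement of a spanning forest contains, besides the edges joining the two components, every chord of $G$ lying inside either component, so the claim that it ``consists exactly of the edges joining the two components'' fails. The correct candidate is the cut determined by the vertex sets of the two components, and the fact that exactly these edges (and no chords) are the ones missing from the facet is itself something that must be proved.

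More importantly, the crux of the converse --- that this cut is \emph{directed} --- is only asserted (``checking the value of $\lambda$ at the relevant vertices forces\dots''). Your route via the supporting functional can be completed, but the missing computation is exactly the content: writing $\lambda_x$ for the value of $\lambda$ at $\mathbf i_{\{x\}}$ and $c$ for the value of $\lambda$ on the facet, connectivity of each component $S$, $T$ of the facet subgraph and the relation $\lambda_e+\lambda_v=c$ on its edges force $\lambda$ to be constant, say $\alpha$, on $E\cap S$ and $c-\alpha$ on $V\cap S$, and similarly $\beta$ and $c-\beta$ on $T$. This simultaneously shows that every edge of $G$ inside $S$ or inside $T$ lies on the facet (settling the chord issue above) and that a cut edge from $V\cap S$ to $E\cap T$ forces $\beta<\alpha$ while one from $V\cap T$ to $E\cap S$ forces $\alpha<\beta$, so only one direction of crossing can occur; singleton sides need a separate (easy) word. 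Without some such argument the proof is incomplete. For comparison, the paper obtains directedness by a different mechanism: if the cut were undirected, the two-component forest would admit two \emph{compatible} spanning-tree extensions in the sense of Lemma \ref{lem:compa}, and the corresponding maximal simplices would lie on opposite sides of the codimension-one simplex they share, contradicting that this simplex lies in the boundary of $Q_G$.
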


\begin{proof}
Let the minimal directed cut $C$ belong to the splitting $V\cup E=S\cup T$. The subgraphs of $G$ induced by $S$ and $T$, respectively, are connected for otherwise $C$ would not be minimal. By picking spanning trees in each, we obtain a two-component forest in $G$ whose edge set is disjoint from $C$. The vertices of $Q_G$ that correspond to edges of the forest are affinely independent, i.e., they form an $(|E|+|V|-3)$-dimensional simplex. The fact that this simplex lies in $\Pi_C$ proves the first claim.

Conversely, every facet of $Q_G$ contains some $|E|+|V|-2$ affinely independent vertices which span a codimension $1$ (rel.\ $Q_G$) simplex $\sigma$. The cycle-free subgraph of $G$ to which these correspond is a two-component forest~$F$. The vertex sets of the two components define a minimal cut $C$ in $G$. The cut $C$ is directed because if it was not then $F$ would have two extensions to spanning trees of $G$ that satisfy the compatibility condition of Lemma \ref{lem:compa} --- but that would mean that the corresponding maximal simplices lie on opposite sides of $\sigma$, which is clearly impossible. Finally, as $\Pi_C$ contains $\sigma$, it has to intersect $Q_G$ in the given facet.
\end{proof}

\begin{ex}
The root polytope of Figure \ref{fig:k23} has five facets which correspond (in the manner described in Lemma \ref{lem:support}) to the star-cuts at each of the five vertices. In this case the graph has no other minimal directed cuts.
\end{ex}

\subsection{Ehrhart polynomials}

For $s\in\N$ let us define $\varepsilon_G(s)=|(s\cdot Q_G)\cap(\Z^E\oplus\Z^V)|$. This is well known to be a polynomial 
in $s$ (which can then be extended to all $s\in\C$), called the \emph{Ehrhart polynomial} of $Q_G$. Let us write $\varepsilon_G$ as in \eqref{eq:ehrhart}, 
\begin{equation}\label{eq:again}
\varepsilon_G(s)=\sum_{k=0}^{|E|+|V|-2}a_kC_k(s),
\end{equation}
using the binomial coefficients $C_k(s)={s+|E|+|V|-2-k \choose |E|+|V|-2}$ and rational numbers $a_k$. There is a unique such expression because the degree of $\varepsilon_G$ is the dimension of $Q_G$ and the following observation.

\begin{lemma}\label{lem:basis}
The expressions $C_0(s),\ldots,C_{|E|+|V|-2}(s)$ form a basis over $\Q$ in the space $P_{|E|+|V|-2}$ of polynomials with rational coefficients and of degree no greater than $|E|+|V|-2$.
\end{lemma}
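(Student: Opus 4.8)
The plan is to exploit the fact that $P_{|E|+|V|-2}$ has dimension exactly $|E|+|V|-1$, which is precisely the number of expressions $C_0,\ldots,C_{|E|+|V|-2}$; hence it suffices to prove that these are linearly independent. Write $N=|E|+|V|-2$ for brevity. First I would record that $C_k(s)=\binom{s+N-k}{N}$ is, for each $k\in\{0,\ldots,N\}$, a polynomial in $s$ of degree exactly $N$ (with leading coefficient $1/N!$), so that all the $C_k$ do lie in $P_N$.

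The key step is to evaluate $C_k$ at the integers $0,1,\ldots,N$. I would check that whenever $0\le j<k\le N$ one has $0\le j+N-k\le N-1$, so that $\binom{j+N-k}{N}=0$; on the other hand $C_k(k)=\binom{N}{N}=1$. Consequently the $(N+1)\times(N+1)$ matrix with entries $C_k(j)$ ($0\le j,k\le N$) is triangular with every diagonal entry equal to $1$, hence invertible.

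To conclude I would invoke the standard fact (Lagrange interpolation, equivalently the nonvanishing of a Vandermonde determinant) that the evaluation map $P_N\to\Q^{N+1}$, $p\mapsto(p(0),p(1),\ldots,p(N))$, is a linear isomorphism. Since this isomorphism carries $C_0,\ldots,C_N$ to the rows of the invertible matrix above, the $C_k$ are linearly independent in $P_N$; being $N+1$ in number they form a basis.

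I do not expect a genuine obstacle here: the only points requiring care are the elementary inequality bookkeeping that places $j+N-k$ in $\{0,1,\ldots,N-1\}$ when $0\le j<k\le N$ (so that the binomial coefficient vanishes), and keeping straight which index labels rows versus columns when asserting triangularity. An essentially equivalent alternative would be to expand $C_k=\binom{s+N-k}{N}=\sum_{i=k}^{N}\binom{N-k}{N-i}\binom{s}{i}$ via the Vandermonde identity and observe that the transition matrix from the standard binomial basis $\binom{s}{0},\ldots,\binom{s}{N}$ of $P_N$ to $C_0,\ldots,C_N$ is again triangular with unit diagonal; this trades the citation of Lagrange interpolation for the fact that the $\binom{s}{i}$ form a basis of $P_N$.
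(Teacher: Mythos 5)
Your proof is correct and follows essentially the same route as the paper: both arguments rest on evaluating the $C_k$ at $0,1,\ldots,|E|+|V|-2$ and exploiting the triangularity coming from $C_k(j)=0$ for $j<k$ together with a dimension count. The only (cosmetic) difference is that you establish linear independence via the invertible evaluation matrix, while the paper establishes spanning by inductively solving for the coefficients that achieve prescribed values.
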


Why we prefer this basis will be made clear by Theorem \ref{thm:h=E}. Then in Remark~\ref{rem:shell} we explain how one naturally arrives at it.

\begin{proof}
The degree of each $C_k(s)$ is exactly $|E|+|V|-2$. As their number equals the dimension, it suffices to show that the $C_k(s)$ span $P_{|E|+|V|-2}$. Since an element of $P_{|E|+|V|-2}$ is determined by its values at $0,1,2,\ldots,|E|+|V|-2$, it suffices to arbitrarily fix rational numbers $r_0,r_1,\ldots,r_{|E|+|V|-2}$ and to find a linear combination of the $C_k(s)$ that takes the value $r_j$ at $j$ for each $j=0,1,\ldots,|E|+|V|-2$.

The roots of $C_k(s)$ are the consecutive integers $k-|E|-|V|+2,\ldots,k-1$. In particular, $C_0(s)$ is the only polynomial among the $C_k(s)$ that does not vanish at $0$. This determines the coefficient of $C_0(s)$ in the desired linear combination. 
Since only $C_0(s)$ and $C_1(s)$ take non-zero values at $1$, the coefficient of $C_1(s)$ also gets 
determined. By a trivial induction proof, the same is true for all coefficients and the resulting linear combination obviously satisfies our requirement.
\end{proof}

\begin{rmk}\label{rem:ehr}
The set $A$ of Lemma \ref{lem:unimod} has cardinality ${s+m-1 \choose m-1}$. Thus the simplices $\sigma$ that appear in Lemma \ref{lem:unimod} have Ehrhart polynomials $\varepsilon_\sigma(s)={s+\dim\sigma \choose \dim\sigma}$. It follows that the number of lattice points in $s$ times the \emph{relative interior} of $\sigma$ is ${s-1 \choose \dim\sigma}$. This is the same observation that the standard proof of 
Ehrhart reciprocity is based on and in any case it follows from that principle since ${s-1 \choose \dim\sigma}=(-1)^{\dim\sigma}{-s+\dim\sigma \choose \dim\sigma}$.
\end{rmk}

\begin{thm}\label{thm:h=E}
If $\varepsilon_G$ is the Ehrhart polynomial of $Q_G$ (as described in \eqref{eq:again}), then the $h$-vector $h$ of any triangulation of $Q_G$ satisfies
\begin{equation}\label{eq:h=E}
x^{|E|+|V|-1}h(x^{-1})=a_0+a_1\,x+a_2\,x^2+\cdots+a_{|E|+|V|-2}\,x^{|E|+|V|-2}.
\end{equation}
In particular, all triangulations of $Q_G$ share the same $h$-vector.
\end{thm}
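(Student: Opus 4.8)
The plan is to prove \eqref{eq:h=E} for a single triangulation $\mathcal T$ of $Q_G$ by a double count of lattice points in dilates $s\cdot Q_G$, organized according to which simplex of $\mathcal T$ a lattice point lies in the relative interior of. Since $\mathcal T$ is a triangulation, every point of $Q_G$ lies in the relative interior of exactly one face of exactly one maximal simplex, and these relative-interior-open faces are precisely the faces of $\mathcal T$ (as an abstract simplicial complex). Hence, for each $s$,
\begin{equation*}
\varepsilon_G(s)=\sum_{\tau\in\mathcal T}\bigl|(s\cdot\tau)\cap(\Z^E\oplus\Z^V)\setminus(s\cdot\partial\tau)\bigr|,
\end{equation*}
the sum ranging over all faces $\tau$ of $\mathcal T$ including $\tau=\varnothing$ (contributing $0$) and the vertices (contributing $1$ each when $s\ge 1$). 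By Lemma \ref{lem:unimod} and the computation in Remark \ref{rem:ehr}, the number of lattice points in the relative interior of $s\cdot\tau$ is $\binom{s-1}{\dim\tau}$ when $\dim\tau\ge 0$, and this formula also correctly gives $0$ for $s\ge 1$ in the empty-face case if we set $\binom{s-1}{-1}=0$. Therefore, grouping faces by dimension,
\begin{equation*}
\varepsilon_G(s)=\sum_{j=-1}^{d}f_j\binom{s-1}{j},\qquad d=|E|+|V|-2,
\end{equation*}
where $f_j$ is the number of $j$-dimensional faces of $\mathcal T$ (with $f_{-1}=1$), i.e.\ the coefficients appearing in the $f$-vector \eqref{eq:f}.

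The next step is a purely algebraic identity matching this expansion with the $C_k$-expansion \eqref{eq:again}. I would establish that $\binom{s-1}{j}=\sum_k c_{jk}C_k(s)$ with the $c_{jk}$ being binomial coefficients independent of $G$, by comparing with the generating-function (or Hilbert-series) form: in $\Q[[t]]$ one has $\sum_{s\ge 0}C_k(s)t^s=t^k/(1-t)^{d+1}$ and $\sum_{s\ge 0}\binom{s-1}{j}t^s=t^{j+1}/(1-t)^{j+1}$ (with the $j=-1$ term equal to $1$, consistent with $\binom{-1}{-1}=1$). So
\begin{equation*}
\sum_{s\ge0}\varepsilon_G(s)\,t^s=\frac{\sum_{k=0}^d a_k t^k}{(1-t)^{d+1}}=\sum_{j=-1}^d f_j\frac{t^{j+1}}{(1-t)^{j+1}}=\frac{\sum_{j=-1}^d f_j\,t^{j+1}(1-t)^{d-j}}{(1-t)^{d+1}}.
\end{equation*}
Comparing numerators gives $\sum_k a_k t^k=\sum_{j=-1}^d f_j\,t^{j+1}(1-t)^{d-j}$, which is exactly the statement that the polynomial $\sum_k a_k x^k$ equals $x^{d+1}f(x^{-1})(1-x)^{\,?}$... more precisely, substituting and using $h(x)=f(x-1)$ from \eqref{eq:ftoh}'s defining relation, one reads off that $\sum_k a_k x^k$ is the reciprocal polynomial of $h$ up to the stated degree shift. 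I would carry this out by writing $f(y)=\sum_{j=-1}^{d}f_j\,y^{d-j}$ as in \eqref{eq:f} (noting the leading term $f_{-1}y^{d+1}=y^{d+1}$), so that $\sum_{j}f_j t^{j+1}(1-t)^{d-j}=t(1-t)^{d+1}\!\cdot\!(\text{stuff})$... and then substituting $t=1/(1+u)$ or $x=1-t$ to convert $f$ into $h$; the bookkeeping of the degree shift to land exactly on $x^{|E|+|V|-1}h(x^{-1})$ is the one routine computation I would actually perform carefully.

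The final sentence, that all triangulations share the same $h$-vector, is then immediate: the left side $\varepsilon_G$ of the identity does not depend on the choice of triangulation, and by Lemma \ref{lem:basis} the $a_k$ are uniquely determined by $\varepsilon_G$; hence the right side of \eqref{eq:h=E}, and therefore $h$ (being the reciprocal polynomial of a fixed polynomial of known degree), is independent of $\mathcal T$. I expect the main obstacle to be none of the algebra but rather making the geometric decomposition airtight: one must justify that the faces of the triangulation $\mathcal T$, as subsets of $Q_G$, partition $Q_G$ into relatively open simplices, and that Lemma \ref{lem:unimod} (stated for maximal simplices spanned by vertices of $Q_G$) applies verbatim to every face $\tau$ of $\mathcal T$ — this is fine because any face of a maximal simplex of $\mathcal T$ is itself spanned by a subset of the vertices of $Q_G$, i.e.\ corresponds to a subforest, so Lemma \ref{lem:unimod} and Remark \ref{rem:ehr} apply to it directly. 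Once that is spelled out, the proof is complete; I would also remark that this argument is the standard one relating the Ehrhart series of a lattice polytope with a unimodular triangulation to the $h$-vector of the triangulation, specialized to the unimodular situation guaranteed by Lemma \ref{lem:unimod}.
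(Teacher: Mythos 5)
Your argument is correct, and its geometric core is identical to the paper's: both proofs partition $Q_G$ into the relative interiors of the simplices of the triangulation and use Lemma \ref{lem:unimod}/Remark \ref{rem:ehr} to get $\varepsilon_G(s)=\sum_{l\ge0}f_l\binom{s-1}{l}$, which is exactly \eqref{eq:ehr-part}. Where you differ is the algebra that turns this into the $C_k$-expansion: the paper changes basis by hand, using the identity $\binom{n}{m}=\sum_{i}(-1)^i\binom{p}{i}\binom{n+p-i}{m+p}$ to write each $\binom{s-1}{l}$ in terms of the $C_k(s)$, derives the explicit coefficient formula \eqref{eq:coefficients}, and compares it with the expansion \eqref{eq:LHS} of $x^{d+1}h(x^{-1})$; you instead pass to the Ehrhart series, where both expansions are transparent, and compare numerators. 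Your route is the standard $h^*$-vector argument for a polytope with a unimodular triangulation and avoids all alternating-sign bookkeeping; the paper's more pedestrian route has the side benefit of producing \eqref{eq:coefficients}, which is reused in Remark \ref{rem:coefficients} and Proposition \ref{pro:a_2}. The step you flagged but did not carry out is indeed a one-line substitution and your degree shift lands correctly: comparing numerators gives $\sum_{k}a_kx^k=\sum_{j=-1}^{d}f_j\,x^{j+1}(1-x)^{d-j}$ with $f_{-1}=1$, and since $f(y)=\sum_{j=-1}^{d}f_j\,y^{d-j}$ and $h(x)=f(x-1)$,
\begin{equation*}
x^{d+1}h(x^{-1})=x^{d+1}f\!\left(\frac{1-x}{x}\right)=\sum_{j=-1}^{d}f_j\,x^{j+1}(1-x)^{d-j}=\sum_{k=0}^{d}a_kx^k,
\end{equation*}
which is \eqref{eq:h=E} because $d+1=|E|+|V|-1$ (your intermediate guess of a factor $t(1-t)^{d+1}$ is not what comes out, but nothing depends on it). One small point to state cleanly: the partition of $s\cdot Q_G$ by relative interiors of dilated faces is valid for $s\ge1$ (at $s=0$ all faces collapse to the origin), so either prove the identity for $s\ge1$ and invoke polynomiality, or keep your empty-face convention, whose series contribution $1$ exactly accounts for $\varepsilon_G(0)=1$; with that said, your treatment is sound, and the uniqueness of the $a_k$ via Lemma \ref{lem:basis} gives the triangulation-independence of $h$ just as in the paper.
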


We will denote the common $h$-vector described in Theorem \ref{thm:h=E} by $h_G$. 

It is obvious from \eqref{eq:f} that the left hand side of \eqref{eq:h=E} is a polynomial. Since the Euler characteristic of a convex polytope 
is $1$, we have 
\[h(0)=f(-1)=(-1)^{d+1}+\sum_{l=0}^d(-1)^{d-l}f_l=(-1)^{d+1}\left(1-\sum_{l=0}^d(-1)^lf_l\right)=0,\] 
implying that both sides of \eqref{eq:h=E} are in $P_{|E|+|V|-2}$. Their degree is in fact much lower than $|E|+|V|-2$, but we will only see that after proving Theorem~\ref{thm:main}: indeed by \cite[Proposition 6.1]{hypertutte} the degree of the interior polynomial is at most $\min\{\,|E|,|V|\,\}-1$. This does not contradict the requirement that the degree of the Ehrhart polynomial $\varepsilon_G$ be $\dim Q_G=|E|+|V|-2$ because each $C_k(s)$ in \eqref{eq:again} has that degree.


\begin{proof}
Let us put $|E|+|V|-2=d$. Fix a triangulation $\mathscr T$ of $Q_G$ with $h$-vector $h$ and corresponding $f$-vector $f(y)=h(y+1)$, cf.\ \eqref{eq:f}. That means that 
\begin{multline*}
h(x)=f(x-1)=(x-1)^{d+1}+\sum_{l=0}^{d}f_l(x-1)^{d-l}\\
=\sum_{i=1}^{d+1}\left((-1)^{d+1-i}{d+1 \choose i}+\sum_{l=0}^{d-i}f_l(-1)^{d-l-i}{d-l \choose i}\right)x^i,
\end{multline*}
where we used the fact that $h(0)=0$. Thus the left hand side of \eqref{eq:h=E} (putting $i=d+1-k$ above) is
\begin{equation}\label{eq:LHS}
\sum_{k=0}^d\left((-1)^k{d+1 \choose d+1-k}+\sum_{l=0}^{k-1}f_l(-1)^{k-l-1}{d-l \choose d+1-k}\right)x^k.
\end{equation}

In order to address the right hand side of \eqref{eq:h=E}, first we 
express ${s-1 \choose l}$ (for $l=0,1,\ldots,d$) in the basis $C_k(s)={s+d-k \choose d}$ ($k=0,1,\ldots,d$) of Lemma~\ref{lem:basis}. (Why we need this will be made clear by \eqref{eq:ehr-part} below.) To do so we will rely on the identity ${n \choose m}=\sum_{i=0}^p(-1)^i{p \choose i}{n+p-i \choose m+p}$. 
(This holds for any integers $n,m$ and non-negative integer $p$. It is very easy to prove by induction on $p$ based on just the basic relation in Pascal's triangle.) Applying it directly to ${s-1 \choose l}$ with $p=d-l$ gives
\begin{align*}
{s-1 \choose l}={}&\sum_{i=0}^{d-l}(-1)^i{d-l \choose i}{s-1+d-l-i \choose l+d-l}\\
={}&\sum_{i=0}^{d-l-1}(-1)^i{d-l \choose i}{s-1+d-l-i \choose d}+(-1)^{d-l}{s-1 \choose d}.
\end{align*}
Here the last term is not one of our basis elements. To express it in the basis we apply our identity again, this time to ${s-1 \choose -1}=0$ with $p=d+1$:
\begin{align*}
{s-1 \choose l}={}&\sum_{i=0}^{d-l-1}(-1)^i{d-l \choose i}{s+d-(l+i+1) \choose d}\\
&+(-1)^{d-l}(-1)^{d+1}\left(-\sum_{j=0}^d(-1)^j{d+1 \choose j}{s+d-j \choose d}\right)\\
={}&\sum_{k=0}^d\left((-1)^{k-l-1}{d-l \choose k-l-1}+(-1)^{l+k}{d+1 \choose k}\right){s+d-k \choose d}.
\end{align*}
If $k\le l$ then of course ${d-l \choose k-l-1}=0$.

Now as the relative interiors of the simplices of $\mathscr T$ partition $Q_G$, by Remark \ref{rem:ehr} we obtain
\begin{multline}\label{eq:ehr-part}
\varepsilon_G(s)=\sum_{\text{simplices }\sigma\text{ in }\mathscr T}{s-1 \choose \dim\sigma}=\sum_{l=0}^{d}f_l{s-1 \choose l}\\
=\sum_{k=0}^d\sum_{l=0}^d\left((-1)^{k-l-1}{d-l \choose k-l-1}+(-1)^{l+k}{d+1 \choose k}\right)\cdot f_l\cdot{s+d-k \choose d},
\end{multline}
that is that in \eqref{eq:again} (and the right hand side of \eqref{eq:h=E}) we have 
\begin{align}
\nonumber
a_k={}&\sum_{l=0}^d\left((-1)^{k-l-1}{d-l \choose k-l-1}+(-1)^{l+k}{d+1 \choose k}\right)\cdot f_l\\
\nonumber
={}&(-1)^k{d+1 \choose k}\sum_{l=0}^d(-1)^lf_l+\sum_{l=0}^{k-1}(-1)^{k-l-1}{d-l \choose k-l-1}f_l\\
\label{eq:coefficients}
={}&(-1)^k{d+1 \choose k}+\sum_{l=0}^{k-1}(-1)^{k-l-1}{d-l \choose k-l-1}f_l.
\end{align}
Comparing the last formula to \eqref{eq:LHS} completes the proof.
\end{proof}

\begin{rmk}\label{rem:coefficients}
From \eqref{eq:coefficients} we see that $a_0=1$ and $a_1=-(d+1)+f_0$. Here $f_0$ is the number of edges in $G$ and $d+1=|E|+|V|-1$ is the number of edges in a spanning tree of $G$, i.e., we have $a_1=b_1(G)$. These values are consistent with \cite[Proposition~6.2 and Theorem 6.3]{hypertutte} and our expectation (cf.\ Theorem \ref{thm:main}) that they be the two lowest-degree coefficients in the common interior polynomial of the hypergraphs $(V,E)$ and $(E,V)$. As to $a_2$, see Proposition \ref{pro:a_2}.
\end{rmk}

\begin{rmk}\label{rem:shell}
The proof of Theorem \ref{thm:h=E} becomes far more transparent if we assume $\mathscr T$ to be shellable\footnote{It seems unlikely for all triangulations of $Q_G$ to be shellable, but at present we do not know any counterexamples.}. This also makes the appearance of $C_k(s)$ more natural. Indeed if there is a shelling order with the quantities $c_i$ as in \eqref{eq:ftoh}, then the left hand side of \eqref{eq:h=E} is $\sum x^{c_i}$ where the summation is over all maximal simplices. We may also count lattice points in $s\cdot Q_G$ using the shelling order, going over the maximal simplices one-by-one. If we do so then the $i$'th simplex contributes lattice points as described in Lemma \ref{lem:unimod}, except that the points along $c_i$ of its facets have already been counted. Geometrically that means that what is left to count are the lattice points in a simplex of full dimension $|E|+|V|-2$ but with reduced sidelength $s-c_i$. (By sidelength we mean the number of lattice points along a side of the simplex minus one. In the absence of shellability one is forced to work with lower-dimensional simplices too, and that causes complications.) The number of those points is exactly $C_{c_i}(s)$. Therefore $a_k$, that is the number of times $C_k(s)$ appears in $\varepsilon_G(s)$, is equal to the number of maximal simplices with $c_i=k$, and that in turn is exactly the coefficient of $x^k$ on the left hand side.
\end{rmk}

\subsection{Cross-sections}\label{ssec:cross}

A remarkable property of triangulations of the root polytope is that they `pair up' hypertrees in the hypergraph $\mathscr H=(V,E)$ and its abstract dual (transpose) $\overline{\mathscr H}=(E,V)$. To set up the correspondence, we recall the following facts from \cite{alex}. First, $Q_G$ naturally projects onto the standard unit simplices $\Delta_E\subset\R^E$ and $\Delta_V\subset\R^V$. The preimage of the barycenter $\mathbf i_V/|V|\in\Delta_V$ under the second projection $\mathrm{pr}_2\colon Q_G\to\Delta_V$ is the $(|E|-1)$-dimensional Minkowski sum 
\begin{equation}\label{eq:cross}
S_E=\frac1{|V|}\left(\sum_{v\in V}\Delta_v\right)+\frac1{|V|}\cdot\mathbf i_V,
\end{equation}
where $\Delta_v\subset\Delta_E\subset\R^E$ is the convex hull of the set $\{\,\mathbf i_{\{e\}}\mid ev\text{ is an edge in }G\,\}$. In particular, for a maximal simplex $\gamma$ in $Q_G$ that corresponds to the spanning tree $\Gamma$ in $G$, we have
\begin{equation}\label{eq:smallcell}
\gamma\cap S_E=\frac1{|V|}\left(\sum_{v\in V}\Delta^\Gamma_v\right)+\frac1{|V|}\cdot\mathbf i_V,
\end{equation}
where $\Delta^\Gamma_v$ is the face of $\Delta_v$ spanned by unit vectors corresponding to neighbors of $v$ in $\Gamma$. Moreover, a quick dimension count shows that the Minkowski sum in the latter formula is a direct sum, i.e., its elements have unique representations as sums of one vector from each summand. Below we will sometimes replace $S_E$ with its homothetic image\footnote{Both $S_E$ and $P_E$ belong, as rather special cases, to the class of \emph{generalized permutohedra} \cite{alex}.} $P_E=\sum_{v\in V}\Delta_v$ so that our formulas look simpler, however we will continue to think of $P_E$ as a cross-section of $Q_G$. We will refer to the set described in \eqref{eq:smallcell}, as well as to the homothetic set
\begin{equation}\label{eq:bigcell}
M_\Gamma=\sum_{v\in V}\Delta^\Gamma_v\subset P_E,
\end{equation}
as the \emph{Minkowski cell} associated to the spanning tree $\Gamma$.


Next, we recall that $M_\Gamma$ contains a unique translate of the unit simplex $\Delta_E$ and it is $\mathbf f+\Delta_E$ \cite{alex} where $\mathbf f\in\R^E$ is the hypertree in $\mathscr H$ induced by $\Gamma$. (The interior of $M_\Gamma$ is disjoint from other integer translates of $\Delta_E$.) Let us denote the barycenter of this translated simplex with
\begin{equation}\label{eq:mark}
\mathbf f^+=\mathbf f+\frac1{|E|}\cdot\mathbf i_E.
\end{equation} 
We will call both $\mathbf f^+$ and (depending on context) the corresponding point
\begin{equation}\label{eq:smallmarker}
\frac1{|V|}\cdot\mathbf f^++\frac1{|V|}\cdot\mathbf i_V\in\gamma\cap S_E\subset S_E\subset Q_G,
\end{equation}
the \emph{emerald marker} of the simplex $\gamma$. Note that the set of 
emerald markers is a simple dilation of the set $B_{\mathscr H}$ of hypertrees. If markers are meant in the sense of \eqref{eq:mark}, then in fact it is a translation of $B_{\mathscr H}$, so that for instance vectors connecting hypertrees are the same as vectors connecting the corresponding markers. Of course each maximal simplex in $Q_G$ also contains a unique violet marker, i.e., an element of the set $\frac1{|E|}\left(B_{\overline{\mathscr H}}+\frac1{|V|}\mathbf i_V\right)+\frac1{|E|}\cdot\mathbf i_E$.

Now if we fix a triangulation of $Q_G$, then each emerald and violet marker will lie in the interior of a unique simplex; as this simplex has unique markers of each color (which are essentially the hypertrees that are induced by the spanning tree corresponding to the simplex), we see that a bijection is induced this way between $B_{\mathscr H}$ and $B_{\overline{\mathscr H}}$. In Section~\ref{sec:main}, we exploit this picture further to prove our main theorem.

\begin{ex}\label{ex:triang}
We examine the root polytope of the complete bipartite graph $G=K_{2,3}$. Let the emerald color class be $E=\{\,e_0,e_1,e_2\,\}$ along with violet color class $V=\{\,v_0,v_1\,\}$. In this case $Q_G$ is the product of the $2$-simplex $\Delta_E$ and the $1$-simplex $\Delta_V$. In Example \ref{ex:k23} we labeled its vertices (using the symbol $\mathbf x$ in place of $\mathbf i_{\{x\}}$). In the upper left panel of Figure \ref{fig:fallsapart} we indicated the cross-sections $S_E$ and $S_V$, as well as all markers for both colors (three each). We also chose a triangulation of the root polytope and showed the Minkowski cells that occur in the cross-sections. These are just three isometric segments in the one-dimensional cross-section $S_V$, whereas in $S_E$ we get two triangles and a rhombus. By examining the spanning trees corresponding to the three maximal simplices (shown on the right), the interested reader may check the validity of Lemma \ref{lem:compa} and the formula \eqref{eq:bigcell}. 

The $f$- and $h$-vectors of our triangulation are $f(y)=y^4+6y^3+12y^2+10y+3$ and $h(x)=f(x-1)=x^4+2x^3$, respectively. The coefficients of the latter are explained by the fact that the triangulation is shellable so that the second and third maximal simplices are both attached along a single facet, cf.\ \eqref{eq:ftoh}. Comparing the $h$-vector to the interior polynomial of Example \ref{ex:k23poly}, we see that Theorem \ref{thm:main} holds in this case.

\begin{figure}[htbp]
\parbox{.25\linewidth}{
\includegraphics[width=\linewidth]{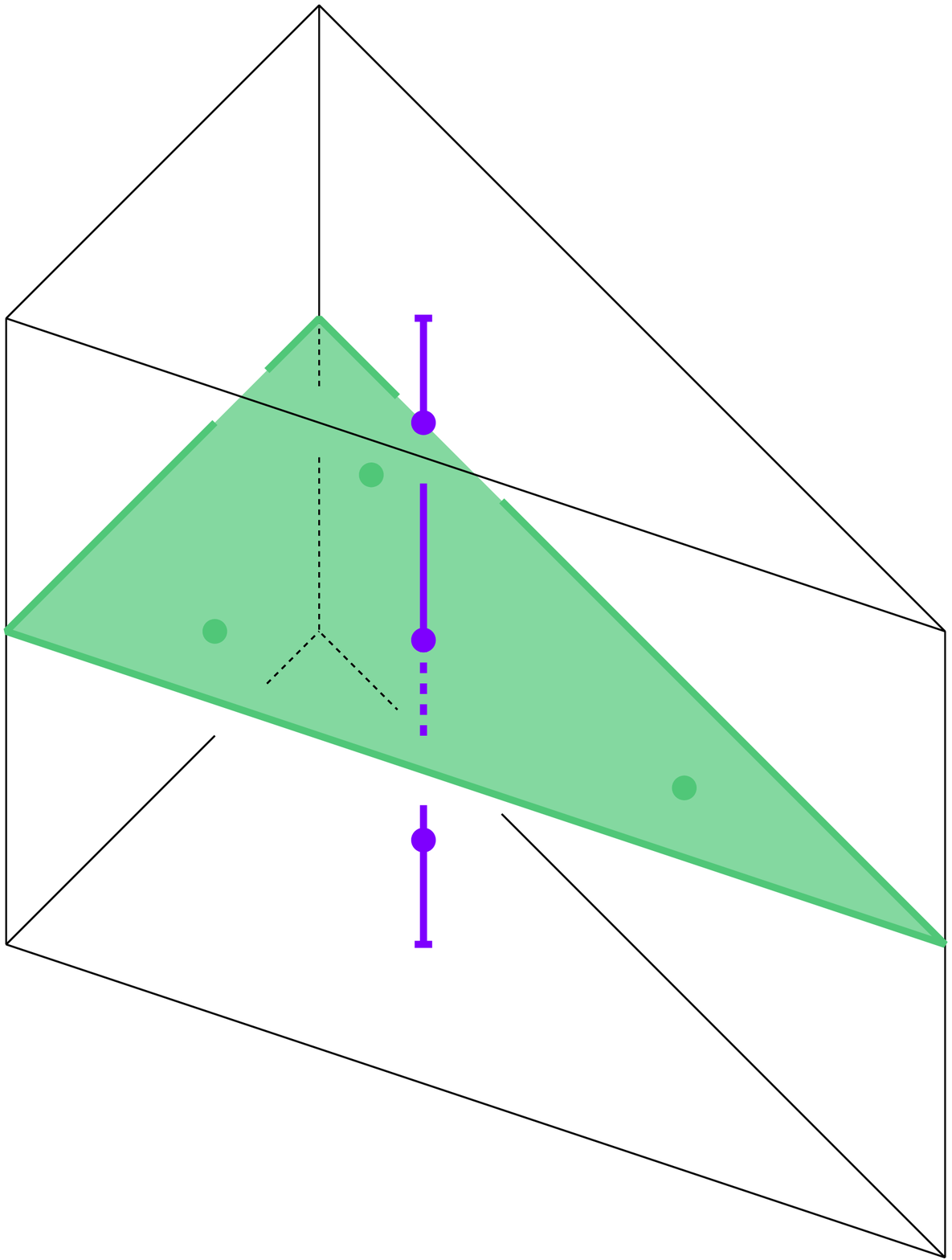}

\includegraphics[width=\linewidth]{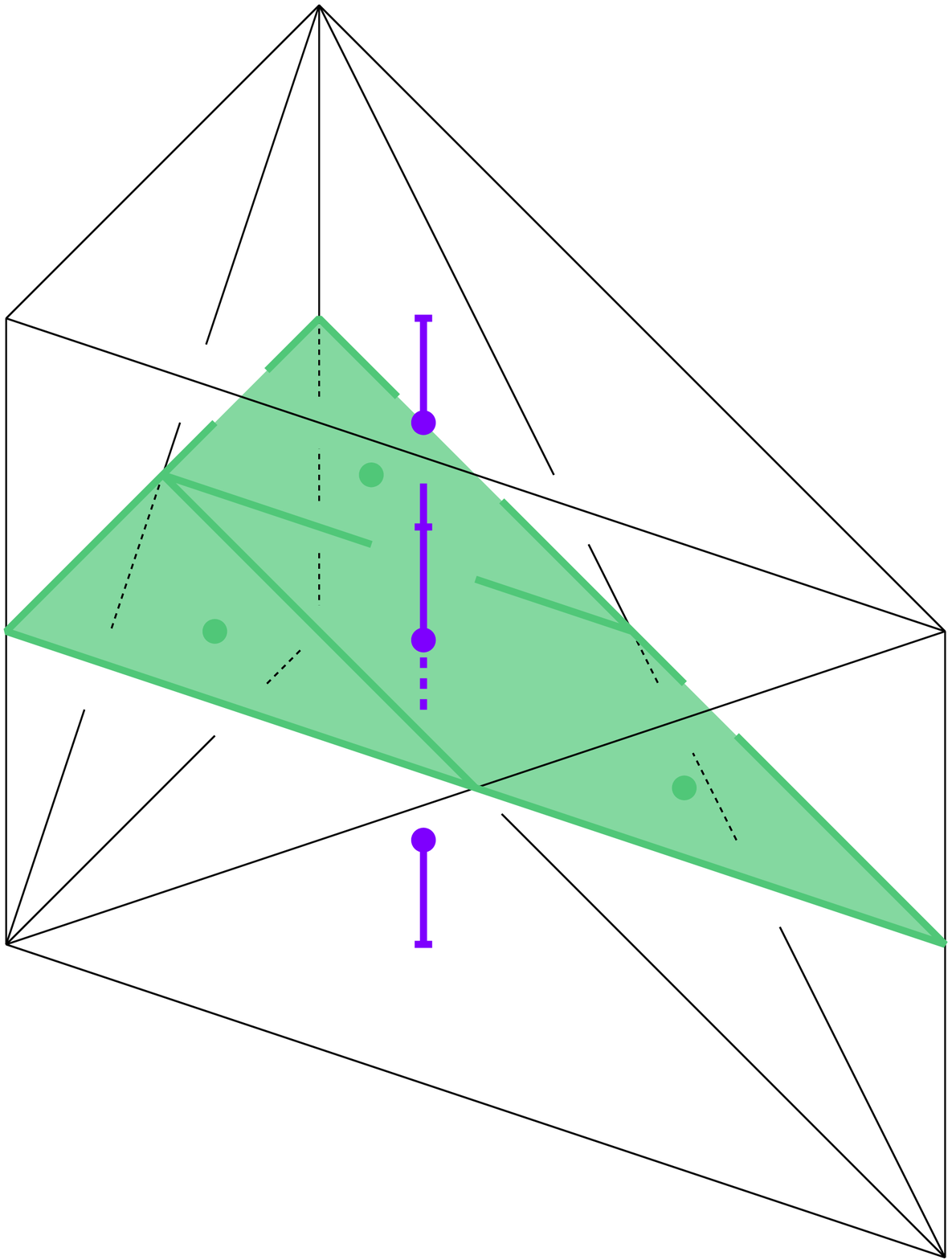}}
\hspace{.1\linewidth}
\raisebox{-1.8in}{\includegraphics[width=.25\linewidth]{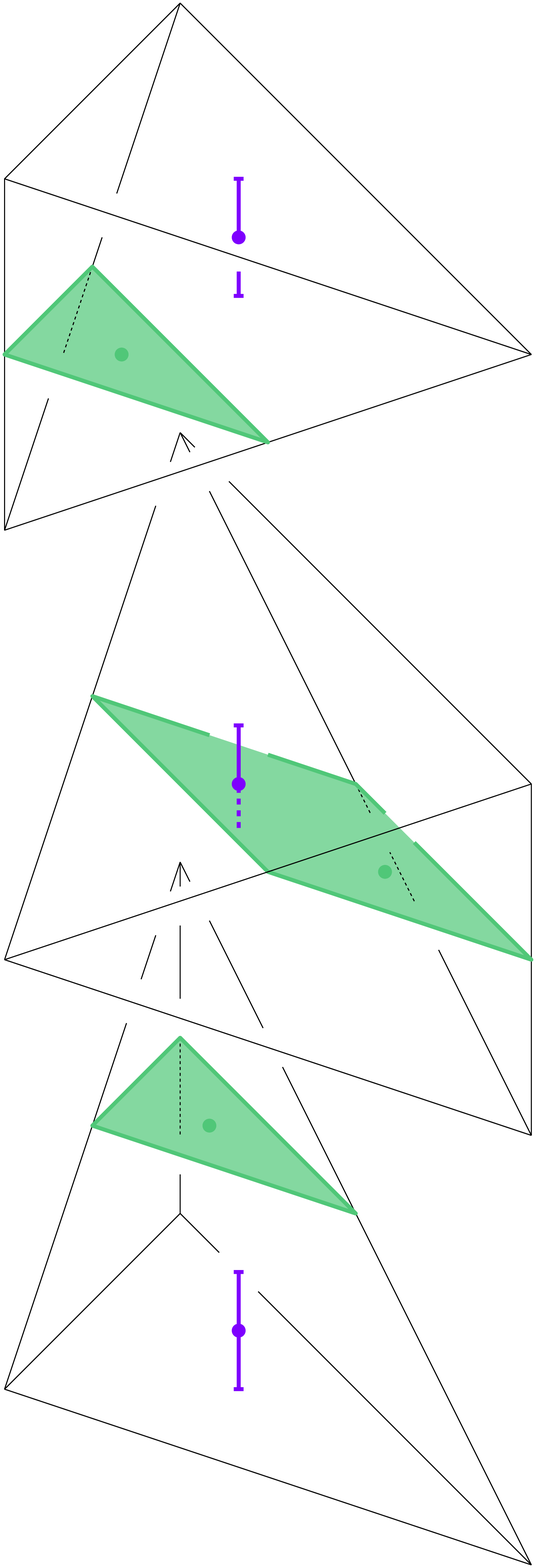}}
\hspace{.1\linewidth}
\raisebox{-1.65in}{\includegraphics[width=.15\linewidth]{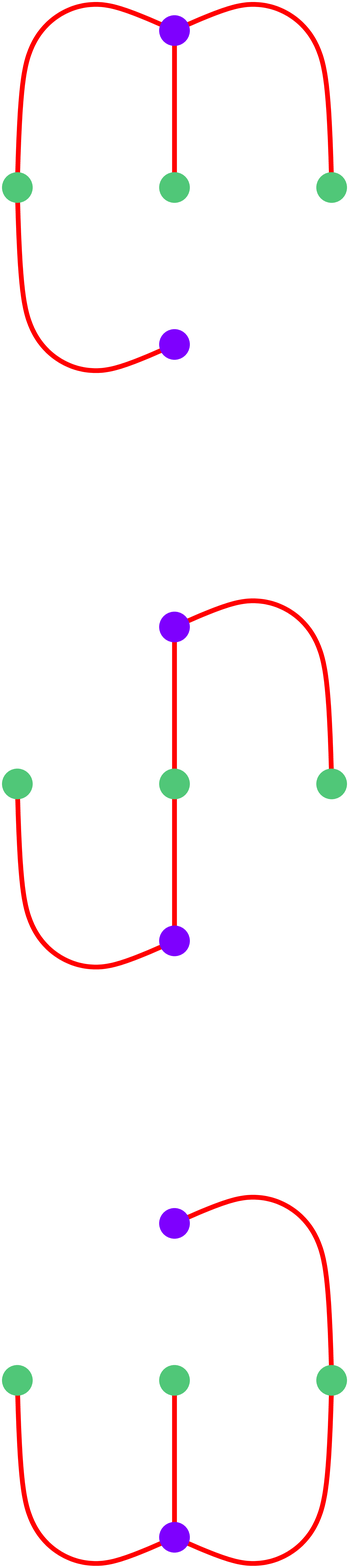}}
\caption{The root polytope of the graph $K_{2,3}$ with cross-sections and markers of both colors (upper left), and a triangulation (lower left and middle) with the corresponding spanning trees (right). Cf.\ 
Figure \ref{fig:k23}.}
\label{fig:fallsapart}
\end{figure}
\end{ex}

We will need one more piece of information about the spanning tree $\Gamma\subset G$, corresponding maximal simplex $\gamma\subset Q_G$, and induced hypertree $\mathbf f\in B_{\mathscr H}$. In \eqref{eq:mark} we gave the coordinates of the emerald marker $\mathbf f^+$ (as it appears in the enlarged cross-section $P_E$) of $\gamma$ in the natural basis of $\R^E$. However it will also be useful for us to describe the marker in terms of the direct sum decomposition \eqref{eq:bigcell} of the Minkowski cell $M_\Gamma$. It is convenient to identify the vertices of $\Delta_v^\Gamma$ with the edges of $\Gamma$ adjacent to $v$. Then the barycentric coordinates of a point in $\Delta_v^\Gamma$ can be thought of as non-negative real numbers (weights) written on these edges, subject to the condition that their sum is $1$. (Since we have $|V|$ such conditions on the $|E|+|V|-1$ weights, this correctly identifies the dimension of $M_\Gamma$ as $|E|-1$.) By taking the sum of the weights assigned to edges adjacent to a vertex $e\in E$, we recover the standard $e$-coordinate in $\R^E$.

\begin{lemma}\label{lem:coordinates}
Let $\Gamma$ be a spanning tree of $G$ that induces the hypertree $\mathbf f$.
The weights (on the edges of $\Gamma$) that produce the emerald marker $\mathbf f^+\in M_\Gamma$ are given as follows. For each $v\in V$, the elements of $E$ are partitioned according to the edge of $G$ adjacent to $v$ through which they can be reached from $v$ by a path in $\Gamma$. Let the weight of the edge be the size of the corresponding set divided by $|E|$. 
\end{lemma}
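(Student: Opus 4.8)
The plan is to verify two things about the weight assignment described in the statement: first, that it genuinely lies in the Minkowski cell $M_\Gamma$ (i.e., the weights on edges at each fixed $v\in V$ sum to $1$ and are non-negative), and second, that the resulting point of $P_E$ is the barycenter $\mathbf f^+$ of the unique integer translate $\mathbf f+\Delta_E$ of the standard simplex contained in $M_\Gamma$. The first point is immediate: removing a vertex $v$ from the tree $\Gamma$ breaks it into components, one hanging off each edge of $\Gamma$ at $v$, and these components partition $E$; dividing the block sizes by $|E|$ therefore gives non-negative numbers summing to $\frac{1}{|E|}\cdot|E|=1$, as required by the description of $\Delta_v^\Gamma$ preceding the lemma. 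So the proposed weights do define a point $\mathbf p\in M_\Gamma$.

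The heart of the argument is computing the $\R^E$-coordinates of $\mathbf p$. By the recipe stated just before the lemma, the $e$-coordinate of $\mathbf p$ is the sum, over all $v\in V$ adjacent to $e$ in $\Gamma$, of the weight on the edge $ev$, which by our rule is $\frac{1}{|E|}$ times the number of elements of $E$ that are separated from $v$ by the edge $ev$ — equivalently, that lie in the component of $\Gamma\setminus\{ev\}$ containing $e$. I would then show that $\sum_{v\sim e}(\text{that block size}) = |E|\cdot\mathbf f(e) + 1$, so that the $e$-coordinate of $\mathbf p$ equals $\mathbf f(e) + \frac{1}{|E|}$, which is exactly the $e$-coordinate of $\mathbf f^+$ by \eqref{eq:mark}. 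To see the count: for each of the $\mathbf f(e)+1$ edges of $\Gamma$ incident to $e$, root $\Gamma$ at $e$; the edge cuts off a subtree not containing $e$, and the number of $E$-vertices we attribute to that edge from $v$'s viewpoint is $|E|$ minus the number of $E$-vertices in that subtree. Summing over the $\mathbf f(e)+1$ incident edges, the cut-off subtrees are disjoint and their union is all of $\Gamma$ minus $e$ itself, so together they contain $|E|-1$ vertices of $E$; hence the total is $(\mathbf f(e)+1)|E| - (|E|-1) = |E|\,\mathbf f(e)+1$. This is the one genuine calculation and I expect it to be the main (though modest) obstacle — one must be careful that the partition of $E\setminus\{e\}$ induced by the incident edges of $e$ in $\Gamma$ really is a partition, which follows because $\Gamma$ is a tree.

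Once every $e$-coordinate of $\mathbf p$ has been shown to equal $\mathbf f(e)+\frac{1}{|E|}$, we conclude $\mathbf p = \mathbf f + \frac{1}{|E|}\mathbf i_E = \mathbf f^+$. Since $\mathbf f^+$ is by definition the barycenter of the unique integer translate of $\Delta_E$ inside $M_\Gamma$, and we have exhibited a point of $M_\Gamma$ (namely $\mathbf p$) with those coordinates, the weight description produces precisely the emerald marker, which is what the lemma asserts. No appeal beyond the already-cited facts of \cite{alex} (that $M_\Gamma=\sum_{v}\Delta_v^\Gamma$ is a direct sum and contains the unique translate $\mathbf f+\Delta_E$) is needed.
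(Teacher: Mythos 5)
Your proposal is correct and follows essentially the same route as the paper: verify the $|V|$ sum-to-one constraints, then compute the $e$-coordinate by a double count over the $\mathbf f(e)+1$ edges of $\Gamma$ at $e$ (your count $(\mathbf f(e)+1)|E|-(|E|-1)$ is the same as the paper's $(|E|-1)(\mathbf f(e))+\mathbf f(e)+1$, just organized via complements of the cut-off subtrees rather than by how often each emerald vertex appears). No gap; the argument matches the paper's proof in substance.
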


\begin{proof}
Our assignment obviously satisfies all $|V|$ constraints and furthermore, if the vertex $e\in E$ has degree $d=\mathbf f(e)+1$ in $\Gamma$, then the sum of the weights on the edges adjacent to $e$ is 
\[\frac1{|E|}\left((|E|-1)(d-1)+d\right)=d-1+\frac1{|E|}=\mathbf f(e)+\frac1{|E|}=\mathbf f^+(e),\] 
as claimed. The formula is valid because when we compute the $d$ weights, $e$ appears in all $d$ relevant counts of emerald vertices, whereas all other elements of $E$ appear $d-1$ times.
\end{proof}

\section{Preparatory results}\label{sec:prep}

In this section we prove a few more technical results needed for our main theorem. 
Regarding a polytope, we say that a simplex spanned by some of its vertices is \emph{interior} if it is not part of the boundary of the polytope, i.e., if the (relative) interior points of the simplex are interior points of the polytope. E.g., all maximal simplices are interior but $0$-dimensional simplices (vertices) are never interior.


Let $\Gamma$ be a tree in our usual bipartite graph $G$ of color classes $E$ and $V$, and let $C\subset\Gamma$ be a connected subgraph. If $\varepsilon$ is an edge in $\Gamma\setminus C$, then there is a clear sense in which one endpoint of $\varepsilon$ is closer to $C$ than the other. Depending on the color class of this endpoint, we say that the emerald or the violet endpoint of $\varepsilon$ \emph{faces} $C$.

\begin{lemma}\label{lem:arbo}
Let $G$ be a connected bipartite graph with root polytope $Q_G$. Let us fix an arbitrary triangulation $\mathscr T$ of $Q_G$. Let $\Sigma$ be a cycle-free subgraph of $G$ so that the corresponding simplex $\sigma\subset Q_G$ is an interior face of $\mathscr T$, and let $C$ be a connected component of $\Sigma$. Then there is a unique maximal simplex in $\mathscr T$ containing $\sigma$ so that in the corresponding spanning tree $\Gamma_C\supset\Sigma$, all edges of $\Gamma_C\setminus\Sigma$ have their violet endpoint face $C$.
\end{lemma}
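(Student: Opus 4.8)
I would prove existence and uniqueness separately, in both cases leveraging the fact that $\sigma$ is an \emph{interior} face. For existence, start with an arbitrary maximal simplex $\gamma_0 \supset \sigma$ in $\mathscr{T}$, with spanning tree $\Gamma_0 \supset \Sigma$. If some edge $\varepsilon \in \Gamma_0 \setminus \Sigma$ has its emerald endpoint facing $C$, I want to ``flip'' across $\sigma$ to repair it. The idea: in $\Gamma_0$, removing $\varepsilon$ disconnects the vertex set into two pieces; reconnect using a different edge of $G$ to get a new spanning tree $\Gamma_1$ still containing $\Sigma$, with the corresponding simplex $\gamma_1$ adjacent to $\gamma_0$ across a facet containing $\sigma$. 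Because $\sigma$ is interior, such a neighbor $\gamma_1$ actually exists in $\mathscr{T}$ (an interior facet of a triangulation is shared by two maximal simplices). I would set up a potential function---e.g.\ the total graph distance in the ambient tree from $C$ to the emerald endpoints of edges in $\Gamma \setminus \Sigma$, or more simply the number of edges of $\Gamma \setminus \Sigma$ whose emerald endpoint faces $C$---and argue that a suitable flip strictly decreases it, so the process terminates at the desired $\Gamma_C$.

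\textbf{Uniqueness.} Suppose $\Gamma, \Gamma'$ are two spanning trees containing $\Sigma$, both with all edges of $\Gamma \setminus \Sigma$ (resp.\ $\Gamma' \setminus \Sigma$) having violet endpoint facing $C$, and both corresponding to maximal simplices of $\mathscr{T}$ containing $\sigma$. If $\Gamma \ne \Gamma'$, I would produce a cycle $\varepsilon_1, \dots, \varepsilon_{2k}$ alternating between $\Gamma$ and $\Gamma'$, contradicting compatibility via Lemma~\ref{lem:compa}. The point is that the ``violet endpoint faces $C$'' condition pins down, for each vertex, how it attaches toward $C$: following edges of $\Gamma$ from any vertex toward $C$, one alternates colors ending at $C$, and the condition forces the last edge before reaching $C$'s emerald vertices to enter at a violet vertex. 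I expect that tracing a discrepancy between $\Gamma$ and $\Gamma'$ back toward $C$ yields the forbidden alternating cycle, because the orientation constraint makes the two trees ``agree on directions'' in a way that is incompatible with them being genuinely different yet compatible.

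\textbf{Main obstacle.} The hard part is the existence argument: showing that a single flip across an interior facet of $\mathscr{T}$ can be chosen to make progress, i.e.\ that when some edge has the wrong (emerald) endpoint facing $C$, there is a neighboring simplex in the triangulation obtained by a flip that fixes that edge without introducing new violations, and that this is captured by a monovariant. One has to be careful that the flip forced by the triangulation $\mathscr{T}$ is the ``right'' one---we don't get to choose $\Gamma_1$ freely, only among simplices actually present in $\mathscr{T}$. I would argue that the facet of $\gamma_0$ opposite to the vertex of $\varepsilon$ (the facet containing $\sigma$, obtained by deleting $\varepsilon$) determines a unique minimal cut, hence via Proposition~\ref{pro:facet} either lies on $\partial Q_G$ or is shared with exactly one other maximal simplex $\gamma_1 \in \mathscr{T}$; since $\sigma$ is interior and $\sigma \subset$ that facet, if the facet were on the boundary then $\sigma$ would be too, a contradiction---so $\gamma_1$ exists, and the combinatorics of the cut (it is \emph{directed}) forces $\Gamma_1 = \Gamma_0 \triangle \{\varepsilon, \varepsilon'\}$ for the unique reconnecting edge $\varepsilon'$ on the correct side, which is exactly the flip that moves the endpoint facing $C$ one step closer to the violet condition. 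Verifying that this strictly decreases the monovariant, and that no previously-good edge becomes bad, is the technical crux.
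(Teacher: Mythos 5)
Your overall skeleton matches the paper's: uniqueness by producing an alternating cycle and contradicting compatibility (Lemma \ref{lem:compa}), and existence by repeatedly deleting a ``bad'' edge $\varepsilon$, noting that the resulting codimension-one face still contains $\sigma$ and hence is interior, so $\mathscr T$ supplies a unique second maximal simplex $\widetilde\Gamma=(\Gamma\setminus\{\varepsilon\})\cup\{\delta\}$ across it. But the existence half has a genuine gap, and it is exactly the point you flag as the ``technical crux'' without resolving it: you never exhibit a monovariant that actually works. The two you suggest (the number of edges of $\Gamma\setminus\Sigma$ whose emerald endpoint faces $C$, or the total distance from $C$ to those endpoints) need not decrease under the forced flip. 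When $\varepsilon$ is deleted, the entire component $D$ of $\Gamma\setminus\{\varepsilon\}$ away from $C$ is re-attached by $\delta$ at a possibly different vertex of $D$; every edge of $D\setminus\Sigma$ on the path between the old and new attachment points reverses its orientation relative to $C$, so previously good edges can become bad and the naive counts can stay the same or increase. Moreover, you have no control over which neighbor $\mathscr T$ provides: the only thing compatibility (Lemma \ref{lem:compa}) gives you is that the new edge $\delta$ itself is good for $\widetilde\Gamma$ --- and even that needs an argument you do not supply. This is why the paper's proof does not use a simple count: it contracts the components of $\Sigma$ to get a rooted tree $\Gamma^{\mathrm{red}}$, always picks a bad edge closest to the root, and compares trees by a strict weak order $\prec$ built from the lexicographic sequence of level sizes up to the first bad level, with a tiebreak on the number of bad edges at that level; one then checks $\Gamma\prec\widetilde\Gamma$ case by case, and termination follows since $\prec$ is a strict weak order on a finite set.

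Two smaller points. In the flip step your parenthetical ``(it is \emph{directed})'' is backwards: by Lemma \ref{lem:support} and Proposition \ref{pro:facet}, a two-component forest whose cut is directed spans a simplex lying in a boundary facet of $Q_G$; for your interior facet the associated cut is precisely \emph{not} directed, and in any case all you need is that an interior codimension-one simplex of $\mathscr T$ lies in exactly two maximal simplices. For uniqueness, your sketch is the paper's argument in outline (pick $\varepsilon\in\Gamma\setminus\Gamma'$, walk from its emerald endpoint toward a fixed vertex of $C$ in both trees, and splice the two paths at their first common vertex into an alternating cycle); as stated it is plausible but you would still need to verify the color-alternation claim, which uses that every edge outside $\Sigma$ in either tree is traversed from its emerald toward its violet endpoint when walking toward $C$.
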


Because of the resemblance  between $\Gamma_C$ and an arborescence in a directed graph, we may refer to this statement as the `arborescence lemma.'


\begin{proof}
We start with uniqueness. (This part of the proof works for all faces $\sigma$ of $\mathscr T$, including those along the boundary of $Q_G$.) Let us assume that $\Gamma_1$ and $\Gamma_2$ are different spanning trees in $G$ so that both satisfy the conditions stipulated for $\Gamma_C$. We will show that they violate the compatibility condition of Lemma \ref{lem:compa} and hence the corresponding simplices cannot both be part of $\mathscr T$. Let us pick an edge $\varepsilon\in\Gamma_1\setminus\Gamma_2$ and let $e$ be the emerald endpoint of $\varepsilon$. Let us also fix an arbitrary vertex $x$ of $C$ and consider the unique paths $p_1\subset\Gamma_1$ and $p_2\subset\Gamma_2$ from $e$ to $x$. The first edge along $p_1$ is clearly $\varepsilon$. Now let us find the first vertex $y$ (after $e$) along $p_1$ that is also a vertex along $p_2$ (since $x$ is such a vertex, $y$ is well-defined). If we follow $p_1$ from $e$ to $y$ and then follow $p_2$ from $y$ back to $e$, we obtain a cycle as described in \eqref{dva} of Lemma \ref{lem:compa}. Indeed, each time we step from an emerald to a violet vertex, that edge is either in $\Sigma$, hence in $\Gamma_1$, or if not then it has to be along the first half of our loop (from $e$ to $y$) which is again part of $\Gamma_1$. Similarly, steps taken from violet to emerald vertices are either in $\Sigma$, hence in $\Gamma_2$, or along the second half of the loop which is part of $\Gamma_2$ as well.

We will establish the existence of $\Gamma_C$ by an iterative argument that is very similar to the second half of the proof of Theorem 10.1 in \cite{hypertutte}. A spanning tree $\Gamma$ containing $\Sigma$ (or rather, the set of edges $\Gamma\setminus\Sigma$) can be viewed as a rooted tree in which the connected components of $\Sigma$ play the role of vertices and $C$ is the root. Let us call an edge of $\Gamma\setminus\Sigma$ \emph{bad} if its emerald endpoint faces $C$ and let us call other edges of $\Gamma\setminus\Sigma$ \emph{good}.

In order to be more precise, for any spanning tree $\Gamma$ containing $\Sigma$, let us consider the tree $\Gamma^{\mathrm{red}}$ which results from contracting each connected component of $\Sigma$ to a point (vertex). The edges of $\Gamma^{\mathrm{red}}$ inherit a good/bad classification from $\Gamma$. The vertex corresponding to $C$ will be treated as the root of $\Gamma^{\mathrm{red}}$. In a rooted tree, edges $\varepsilon$ have a well-defined distance~$d(\varepsilon)\in\N$ to the root (those adjacent to the root have $d=0$ etc.). Let us now associate the following quantities to $\Gamma$.
\begin{itemize}
\item Let $n(\Gamma)$ denote the smallest value of $d$ among bad edges of $\Gamma^{\mathrm{red}}$. If there are no bad edges, let $n(\Gamma)$ be one more than the maximal value of $d$ among the edges of $\Gamma^{\mathrm{red}}$.
\item For $1\le m\le n(\Gamma)$, let $\lambda_{\Gamma}(m)$ be the number of edges $\varepsilon$ of $\Gamma^{\mathrm{red}}$ with $d(\varepsilon)=m-1$. These values are positive. For $m>n(\Gamma)$, we let $\lambda_{\Gamma}(m)=0$. 
\end{itemize}
Then for a pair of spanning trees $\Gamma_1,\Gamma_2$ extending $\Sigma$, we write $\Gamma_1\prec\Gamma_2$ if either
\begin{enumerate}
\item\label{lexi} the sequence $\lambda_{\Gamma_1}(1),\lambda_{\Gamma_1}(2),\lambda_{\Gamma_1}(3)\ldots$ is smaller in lexicographic order than the sequence $\lambda_{\Gamma_2}(1),\lambda_{\Gamma_2}(2),\lambda_{\Gamma_2}(3)\ldots$, or
\item\label{tuske} the two sequences coincide (implying $n(\Gamma_1)=n(\Gamma_2)=n$) but the number of bad edges with $d=n$ is higher in $\Gamma_1^{\mathrm{red}}$ than in $\Gamma_2^{\mathrm{red}}$.
\end{enumerate}

The relation $\prec$ is a so called strict weak order on the set of spanning trees of $G$ containing $\Sigma$, in particular it is (obviously) transitive and asymmetric. To finish the proof of the Lemma, it suffices to show that if a tree $\Gamma\supset\Sigma$ corresponds to a maximal simplex in $\mathscr T$ and does not satisfy the conditions for $\Gamma_C$ (i.e., $\Gamma$ has bad edges), then there is another tree $\widetilde\Gamma$ containing $\Sigma$, with $\Gamma\prec\widetilde\Gamma$, which also appears as a maximal simplex in $\mathscr T$. Let us pick a bad edge $\varepsilon$ of $\Gamma$ from among those that are closest to the root in $\Gamma^{\mathrm{red}}$. As $\Sigma$ represents an interior face of $\mathscr T$, so does the larger set $\Gamma\setminus\{\varepsilon\}$. I.e., there exists an edge $\delta$ of $G$ so that $\widetilde\Gamma=(\Gamma\setminus\{\varepsilon\})\cup\{\delta\}$ corresponds to a maximal simplex of $\mathscr T$. An application of Lemma \ref{lem:compa} to $\Gamma$ and $\widetilde\Gamma$ shows that $\delta$ is a good edge for $\widetilde\Gamma$. 


It is easy to check that $\Gamma\prec\widetilde\Gamma$ is indeed true. If $\delta$ is closer to the root in $\widetilde\Gamma^{\mathrm{red}}$ than $\varepsilon$ was in $\Gamma^{\mathrm{red}}$, or if $\varepsilon$ was the only bad edge of $\Gamma^{\mathrm{red}}$ at distance $d(\varepsilon)$ from the root, then the relation holds by part \eqref{lexi} of its definition. Otherwise it holds by part \eqref{tuske}. This completes the proof.
\end{proof}

The following `path lemma' will soon be useful as well. The term `transfer of valence' was introduced right after Definition \ref{def:activity}.

\begin{lemma}\label{lem:path}
Let $\Gamma$ be a spanning tree of the connected bipartite graph $G$ that induces the hypertree $\mathbf f$ in $\mathscr H=(V,E)$ (here as usual, $E$ and $V$ are the color classes of $G$). Let $a,b,c$ be distinct elements of $E$ so that the unique path from $c$ to $a$ in $\Gamma$ passes through $b$. Suppose that $\mathbf f$ is such that $c$ can transfer valence to $a$. Then $\mathbf f$ is also such that $b$ can trans\-fer valence to $a$.
\end{lemma}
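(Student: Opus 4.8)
The plan is to work entirely with the characterization of hypertrees via tight sets (Lemmas \ref{lem:ineq}, \ref{lem:tight}, \ref{lem:tightcomp}) together with the combinatorics of the spanning tree $\Gamma$. The hypothesis that $c$ can transfer valence to $a$ means, by Lemma \ref{lem:ineq}, that if we decrease $\mathbf f(c)$ by $1$ and increase $\mathbf f(a)$ by $1$ we still satisfy all the constraints $\sum_{e\in E'}\mathbf g(e)\le\mu(E')$; the only ones that can fail are those for sets $E'$ that are tight at $\mathbf f$, contain $a$, and do not contain $c$. So the hypothesis is equivalent to: \emph{no set tight at $\mathbf f$ contains $a$ but excludes $c$.} Our goal, that $b$ can transfer valence to $a$, is equivalent to the analogous statement: \emph{no set tight at $\mathbf f$ contains $a$ but excludes $b$.} Thus it suffices to prove the contrapositive: if $T$ is tight at $\mathbf f$ with $a\in T$ and $b\notin T$, then there is a tight set $T'$ with $a\in T'$ and $c\notin T'$.

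First I would reduce to a tight set that is ``connected along $\Gamma$'' using Lemma \ref{lem:tightcomp}: given a tight $T\ni a$, the part $T_0$ of the partition of $T$ induced by $\Gamma$ that contains $a$ is again tight at $\mathbf f$, and $\Gamma\big|_{T_0}$ is a tree (connected by construction, cycle-free since $\Gamma$ is). Replacing $T$ by $T_0$ we may assume $T$ is tight, contains $a$, and $\Gamma\big|_T$ is connected. If $b\notin T$, then since $b$ lies on the $\Gamma$-path from $a$ to $c$ and $\Gamma\big|_T$ is a connected subtree containing $a$, the vertex $c$ cannot be connected to $a$ within $\Gamma\big|_T$ either --- more precisely, any element of $E$ reachable from $a$ inside $\Gamma\big|_T$ is reachable via a $\Gamma$-path avoiding $b$, hence $c\notin T$ (the path from $a$ to $c$ is forced through $b$). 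This already gives the desired $T'=T$, contradicting the hypothesis.

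The one point requiring care is making the ``connectedness blocks $c$'' step precise: I need that in the subtree $\Gamma\big|_T$, the unique $\Gamma$-path between any two vertices of $T$ stays inside $\Gamma\big|_T$. This holds because $\Gamma\big|_T$ is a \emph{spanning tree} of $G\big|_T$ (this is exactly what the proof of Lemma \ref{lem:tightcomp} establishes for a tight set), so it is connected and the unique path in it between two of its emerald vertices coincides with the unique path in $\Gamma$ between them --- both are paths in the tree $\Gamma$, hence equal. Consequently, if the $\Gamma$-path from $a$ to $c$ meets $b\notin T$, that same path is \emph{not} contained in $\Gamma\big|_T$, so $c$ is not a vertex of $\Gamma\big|_T$, i.e. $c\notin T$. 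I expect this verification --- that the ``reduced'' tight set cannot contain $c$ once it omits the intermediate vertex $b$ --- to be the only real step; everything else is a direct translation between the transfer-of-valence condition and the tight-set language via Lemmas \ref{lem:ineq} and \ref{lem:tight}. An alternative, if one prefers to avoid the contrapositive, is to exhibit directly a witnessing tight set for ``$b$ can transfer to $a$'': run the argument in the proof of order-independence (the construction of $T=\bigcap_a T_a$) but I believe the contrapositive route above is cleaner.
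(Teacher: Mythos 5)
Your proof is correct and follows essentially the same route as the paper: the paper also takes a tight set $T$ witnessing that $b$ cannot transfer to $a$, invokes Lemma \ref{lem:tightcomp} together with the uniqueness of paths in the tree $\Gamma$, and derives a contradiction with the transferability from $c$ to $a$. The only difference is cosmetic — the paper argues that $c$ must lie in $T$ and in the same $\Gamma|_T$-component as $a$ and then contradicts cycle-freeness, while you pass to the component $T_0$ containing $a$ and show it excludes $c$ — but the ingredients and structure are identical.
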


\begin{proof}
Assume that $b$ cannot transfer valence to $a$. Then there exists a set $T$ of hyperedges that is tight at $\mathbf f$, contains $a$, but does not contain $b$. (Cf.\ Lemma \ref{lem:ineq} and the discussion right after it.) We have to have $c\in T$ for otherwise no transfer would be possible from $c$ to $a$. For the same reason, in view of Lemma \ref{lem:tightcomp}, $a$ and $c$ have to be in the same connected component of the forest $\Gamma\big|_T$. Hence $a$ and $c$ can be connected by a path in $\Gamma\big|_T$, but since $b\not\in T$, that is different from the path, in $\Gamma$, that passes through $b$. This contradicts the cycle-freeness of $\Gamma$.
\end{proof}

Let $\mathscr T$ be a fixed triangulation of $Q_G$ and let us also fix an order of the color class $E$ (which we also think of as the set of hyperedges in the hypergraph $(V,E)$). 
Let $\gamma$ be a maximal simplex of $\mathscr T$ and $\Gamma$ the corresponding spanning tree of $G$. Let $\Gamma$ induce the hypertree $\mathbf f$ and let $e\in E$ be an internally inactive hyperedge with respect to $\mathbf f$. Let $e'$ be the smallest hyperedge so that $\mathbf f$ is such that $e$ can transfer valence to $e'$, and let the hypertree $\mathbf g$ be the result of that transfer. 

We will refer to the hyperedge $e'$ above as the \emph{favorite} of $e$ (with respect to the chosen order). If $e$ is indeed internally inactive then we have $e'<e$; for an internally active hyperedge, let us define its favorite to be itself. 


Let us now connect the two emerald markers $\mathbf f^+$ and $\mathbf g^+$ (cf.\ \eqref{eq:mark}) by a straight line segment, which we call the \emph{feeler} for the pair $(\mathbf f,e)$. (So technically, the feeler is defined to be a subset of the Minkowski sum $P_E$ discussed in Section~\ref{sec:root}, but we may also think of it as a subset of the cross-section $S_E\subset Q_G$ since $P_E$ and $S_E$ are related by a dilation.)
Let $\gamma'$ be the maximal simplex that the feeler enters right after it leaves $\gamma$. (In Lemma \ref{lem:feeler} we will see that the feeler exits $\gamma$ through a relative interior point of a facet so that it enters the interior of the next maximal simplex. It is possible for $\mathbf g^+$ to be the emerald marker of $\gamma'$, but it will typically not be the case.) If $\Gamma'$ is the spanning tree that corresponds to $\gamma'$, then the symmetric difference of $\Gamma$ and $\Gamma'$ consists of exactly two edges of $G$. In the next Lemma we identify one of those two edges.

\begin{lemma}\label{lem:feeler}
Let the hypertrees $\mathbf f,\mathbf g\in B_{\mathscr H}$ differ by a single transfer of valence from $e$ to $e'$. Let us connect the emerald markers $\mathbf f^+$ and $\mathbf g^+$ by a line segment $l$ in $P_E$. If $\mathbf f^+$ is an interior point of the Minkowski cell $M_\Gamma$ (for a spanning tree~$\Gamma$ inducing $\mathbf f$, cf.\ \eqref{eq:bigcell}) then $l$ leaves $M_\Gamma$ through a codimension $1$ stratum of its boundary which corresponds to removing from $\Gamma$ the first edge along the unique path from $e$ to $e'$.
\end{lemma}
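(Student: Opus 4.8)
The plan is to coordinatize $M_\Gamma$ near $\mathbf f^+$ by the edge-weights of Lemma \ref{lem:coordinates}, to write the segment $l$ in these coordinates, and to see which coordinate hyperplane it meets first. Recall that, since $\sum_{v\in V}\Delta_v^\Gamma$ is a direct Minkowski sum, assigning to a weight system (a non-negative number on each edge of $\Gamma$, with the weights at every $v\in V$ summing to $1$) the vector of $\R^E$ whose $x$-coordinate is the total weight at $x$ defines an affine isomorphism from the $(|E|-1)$-dimensional affine space of such weight systems onto $\operatorname{aff}(M_\Gamma)$: domain and target both have dimension $|E|-1$, and the map is injective near the interior point $\mathbf f^+$ (unique representations), hence globally. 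Under this isomorphism $M_\Gamma$ is the image of the non-negative weight systems, and, by the dictionary of \cite{alex}, the codimension $1$ stratum of $\partial M_\Gamma$ obtained by removing an edge $\varepsilon$ from $\Gamma$ is the locus where the weight on $\varepsilon$ vanishes. Because $\mathbf f^+$ is interior to $M_\Gamma$, all of its weights are strictly positive.

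Next I would identify the weights along $l$. Write the path in $\Gamma$ from $e$ to $e'$ as $e=u_0,w_1,u_1,w_2,\ldots,w_k,u_k=e'$ with $u_i\in E$ and $w_i\in V$. Then $\mathbf g^+-\mathbf f^+=\mathbf i_{\{e'\}}-\mathbf i_{\{e\}}=\sum_{i=1}^k(\mathbf i_{\{u_i\}}-\mathbf i_{\{u_{i-1}\}})$, and since $u_{i-1}$ and $u_i$ are both neighbors of $w_i$ in $\Gamma$ this lies in the direction space of $\operatorname{aff}(M_\Gamma)$. By the isomorphism above there is a unique weight direction mapping to it, and one checks at once that it is the one which, for each $i=1,\ldots,k$, decreases the weight on $\{u_{i-1},w_i\}$ and increases the weight on $\{u_i,w_i\}$, both at unit rate, while keeping every other weight fixed (this respects all $|V|$ summation constraints and has the correct image). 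So, moving along $l$ away from $\mathbf f^+$, the weights on $\{u_0,w_1\},\{u_1,w_2\},\ldots,\{u_{k-1},w_k\}$ decrease linearly at equal rates, the weights on the complementary path-edges increase, and all other weights stay constant and positive.

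It remains to see that the weight on the first path-edge $\{u_0,w_1\}=\{e,w_1\}$ hits $0$ first. By Lemma \ref{lem:coordinates}, the weight on $\{u_{i-1},w_i\}$ at $\mathbf f^+$ is $|A_i\cap E|/|E|$, where $A_i$ is the connected component of $u_{i-1}$ in $\Gamma\setminus\{u_{i-1},w_i\}$. Walking back along the path shows $A_1\subsetneq A_2\subsetneq\cdots\subsetneq A_k$ (each $A_{i+1}$ contains $A_i$ together with $w_i$, $u_i$ and everything hanging off them in $\Gamma$, while $u_i\notin A_i$), so $|A_1\cap E|<|A_2\cap E|<\cdots<|A_k\cap E|$ and the weight on $\{e,w_1\}$ is strictly the smallest among the decreasing weights. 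Since these all decrease at the same rate and every non-decreasing weight remains positive, the weight on $\{e,w_1\}$ reaches $0$ strictly before any other weight; therefore $l$ leaves $M_\Gamma$ exactly through the facet on which the weight of $\{e,w_1\}$ vanishes, which is the stratum obtained by removing the first edge of the path from $e$ to $e'$. (This also shows, as the remark before the Lemma anticipates, that $l$ crosses this facet transversally at a relative interior point.)

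The main obstacle is really only the first paragraph: verifying cleanly that the straight segment downstairs in $P_E$ lifts to a straight, uniquely determined motion in edge-weights — that is, that the weight-to-point map is an affine isomorphism in a neighborhood of an interior point of $M_\Gamma$, so that ``move toward $\mathbf g^+$'' is literally ``shift weight uniformly along the $e$-to-$e'$ path in $\Gamma$.'' Granting that, the rest is the elementary monotonicity $|A_1\cap E|<|A_2\cap E|<\cdots<|A_k\cap E|$, read off directly from the tree $\Gamma$.
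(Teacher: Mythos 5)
Your proof is correct and follows essentially the same route as the paper's: parametrize the feeler in the edge-weight (barycentric) coordinates of the direct sum decomposition of $M_\Gamma$, observe that only the weights along the $e$-to-$e'$ path change (alternately decreasing and increasing at unit rate), and conclude from the monotonicity of the initial weights that the first path-edge's weight vanishes first. Your nested-component argument $A_1\subsetneq A_2\subsetneq\cdots\subsetneq A_k$ simply spells out the inequality $u_1<u_2<\cdots<u_k$ that the paper asserts follows easily from Lemma \ref{lem:coordinates}.
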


\begin{proof}
In Lemma \ref{lem:coordinates} we described the starting point $\mathbf f^+$ of $l$ in terms of the direct sum decomposition \eqref{eq:bigcell} of $M_\Gamma$. We will extend that description to the initial segment of $l$. Let the path $p$ in $\Gamma$ from $e$ to $e'$ be $\varepsilon_1,\delta_1,\varepsilon_2,\delta_2,\ldots,\varepsilon_k,\delta_k$ and assume that at $\mathbf f^+$ these edges have the weights $u_1,v_1,u_2,v_2,\ldots,u_k,v_k$, respectively, as given in Lemma \ref{lem:coordinates}. (Note that emerald and violet vertices alternate along $p$ and since $e$ and $e'$ are both emerald, there has to be an even number of edges.) Then we claim that $l$ is parametrized by the weights
\begin{equation}\label{eq:feeler}
u_1-t,v_1+t,u_2-t,v_2+t,\ldots,u_k-t,v_k+t
\end{equation}
along $p$, while all other weights are constant.
Indeed, this ensures that sums of weights on edges adjacent to a violet vertex remain fixed at $1$; the sums of weights on edges adjacent to an emerald vertex also remain constant except in the cases of $e$ and $e'$, which is consistent with the direction of $l$.

The segment $l$ reaches the boundary of $\gamma$ (which is the maximal simplex corresponding to $\Gamma$) when it reaches the boundary of its (dilated) cross-section $M_\Gamma$, i.e., when one of the weights becomes zero. By \eqref{eq:feeler}, this will occur on one of the $\varepsilon_i$. In fact, from Lemma \ref{lem:coordinates} it easily follows that $u_1<u_2<\cdots<u_k$, from which we see that it is the edge $\varepsilon_1$ (and only that) that gets removed from $\Gamma$ when we pass from $\gamma$ to the adjacent maximal simplex. 
\end{proof}

Note that Lemma \ref{lem:feeler} does not provide any information on the unique edge in $\Gamma'\setminus\Gamma$. That depends on the nature of the triangulation $\mathscr T$. If the feeler intersects only the maximal simplices marked by $\mathbf f^+$ and $\mathbf g^+$, then this edge will be adjacent to $e'$; otherwise we have basically no control over it.

\begin{rmk}\label{rem:rearrange}
Let us substitute $t=u_1$ in \eqref{eq:feeler} and use the resulting values as new weights on the edges $\varepsilon_1,\delta_1,\varepsilon_2,\delta_2,\ldots,\varepsilon_k,\delta_k$, while we keep the weights provided by $\mathbf f^+$ on other edges of $\Gamma$. These are the barycentric coordinates (in the simplex $\gamma$ that corresponds to $\Gamma$) of the point where the feeler leaves the Minkowski cell, i.e., the simplex. But they also correspond to the following operation. Let $C$ be the component of $\Gamma\setminus\{\varepsilon_1\}$ that does not contain $e'$. Now remove $\varepsilon_1$ from $\Gamma$ and then identify $e$ and $e'$ to obtain a new tree. (One can imagine this as transporting $C$ along the path $p$ from $e$ to $e'$.) Let the merged vertex have multiplicity $2$. If we apply the edge-weight formula of Lemma \ref{lem:coordinates} to this tree, it is easy to see that we obtain exactly the weights we have just described. This observation may sound ad hoc but it will be useful in the proof of Theorem \ref{thm:tech} below.
\end{rmk}

The last result in this section is an extension of \cite[Lemma 7.4]{hypertutte}. It allows us to generate hypercubes of hypertrees in certain situations. We state it in the hypergraph context even though it holds for integer polymatroids as well.

\begin{lemma}\label{lem:cube}
Let $\{\,a_1,\ldots,a_m\,\}$ and $\{\,x_1,\ldots,x_m\,\}$ be disjoint collections of hyperedges in the connected hypergraph $\mathscr H=(V,E)$. Let $\mathbf f\colon E\to\N$ be a hypertree which is such that $x_j$ can transfer valence to $a_j$ for all $j$ but $x_j$ cannot transfer valence to $a_i$ for any $i<j$. Then $\mathbf f$ is such that any subset of the transfers mentioned above is simultaneously possible, that is, for any subset $J\subset\{\,1,\ldots,m\,\}$, the vector $\mathbf f+\sum_{j\in J}(\mathbf i_{\{a_j\}}-\mathbf i_{\{x_j\}})$ is another hypertree.
\end{lemma}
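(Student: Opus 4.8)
The plan is to prove the statement by induction on $|J|$, using submodularity (Lemma~\ref{lem:tight}) to combine the individual tightness obstructions. Throughout, note that $\mathbf f+\sum_{j\in J}(\mathbf i_{\{a_j\}}-\mathbf i_{\{x_j\}})$ satisfies $\mathbf f(e)\ge0$ automatically once it satisfies all the upper-bound constraints of Lemma~\ref{lem:ineq} (as remarked after that lemma), and it automatically has coordinate sum $|V|-1$; so by Lemma~\ref{lem:ineq} it suffices to show $\sum_{e\in E'}\bigl(\mathbf f+\sum_{j\in J}(\mathbf i_{\{a_j\}}-\mathbf i_{\{x_j\}})\bigr)(e)\le\mu(E')$ for every $E'\subset E$. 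The base case $|J|\le1$ is exactly the hypothesis. For the inductive step, write $J=\{j_1<\cdots<j_r\}$ and let $J'=J\setminus\{j_r\}$; by induction $\mathbf g:=\mathbf f+\sum_{j\in J'}(\mathbf i_{\{a_j\}}-\mathbf i_{\{x_j\}})$ is a hypertree, and we must show $x_{j_r}$ can transfer valence to $a_{j_r}$ in $\mathbf g$, i.e.\ that $\mathbf g+\mathbf i_{\{a_{j_r}\}}-\mathbf i_{\{x_{j_r}\}}$ is a hypertree.

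The key step is to show this last transfer is still legal after the earlier transfers have been performed. Suppose not; then there is a set $T$ tight at $\mathbf g$ with $a_{j_r}\in T$ and $x_{j_r}\notin T$ (this is the characterization of a blocked transfer coming from Lemma~\ref{lem:ineq} and the discussion after it: a transfer from $x$ to $a$ is impossible iff some tight set contains $a$ but not $x$). The idea is to pull this tight-for-$\mathbf g$ set back to a tight-for-$\mathbf f$ set, derive a contradiction with the hypothesis on $\mathbf f$. First, $T$ is tight at $\mathbf g$ means $\sum_{e\in T}\mathbf g(e)=\mu(T)$, and since for all $j\in J'$ we have $j<j_r$ so $x_j$ cannot transfer to $a_{j_r}$ --- but more relevantly, I want to control how the correction terms $\mathbf i_{\{a_j\}}-\mathbf i_{\{x_j\}}$ interact with $T$. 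The cleanest route: because $\sum_{e\in E}\mathbf g(e)=\sum_{e\in E}\mathbf f(e)$ and $\mathbf g\le\mathbf f$ coordinatewise fails in general, I instead argue as follows. Among $T$, the contribution of the $J'$-corrections is $|\{j\in J': a_j\in T\}|-|\{j\in J': x_j\in T\}|$, which is at most $|\{j\in J': a_j\in T\}|$; so $\sum_{e\in T}\mathbf f(e)\ge\mu(T)-|\{j\in J': a_j\in T\}|$. Now consider, for each $j\in J'$ with $a_j\in T$, the hypothesis that $x_j$ can transfer valence to $a_j$ in $\mathbf f$: that means no tight-at-$\mathbf f$ set contains $a_j$ without $x_j$. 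I would like to enlarge $T$ to a set $T^*$ that is tight at $\mathbf f$ and from which a contradiction follows, by repeatedly using Lemma~\ref{lem:tight} to intersect/union with the tight sets witnessing the various transfers.

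Concretely, here is the mechanism I expect to work. Run the transfers $j_1,\dots,j_{r-1}$ one at a time; at stage $i$ we have a hypertree $\mathbf f^{(i)}=\mathbf f+\sum_{\ell\le i}(\mathbf i_{\{a_{j_\ell}\}}-\mathbf i_{\{x_{j_\ell}\}})$, and by induction the transfer $x_{j_{i}}\rightsquigarrow a_{j_{i}}$ is legal at $\mathbf f^{(i-1)}$. I claim the following invariant: if $S$ is tight at $\mathbf f^{(i)}$, then there is $S'\supseteq S$, tight at $\mathbf f$, with $S'\cap\{x_{j_1},\dots,x_{j_i},a_{j_1},\dots,a_{j_i}\}\cap S=\dots$ --- more precisely, that tight sets at $\mathbf f^{(i)}$ and tight sets at $\mathbf f$ differ in a controlled way. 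The cleanest invariant is: \emph{for each $i$, a set is tight at $\mathbf f^{(i)}$ iff it is tight at $\mathbf f$ and for every $\ell\le i$ it contains $x_{j_\ell}$ whenever it contains $a_{j_\ell}$, OR ... } --- this is getting delicate, so let me state the invariant I will actually prove: for $S\subset E$ with $a_{j_r}\in S$, $x_{j_r}\notin S$, if $S$ is tight at $\mathbf f^{(r-1)}=\mathbf g$ then the set $S^*=S\cup\{x_{j_\ell}: a_{j_\ell}\in S,\ \ell\le r-1\}$ contains $a_{j_r}$, still misses $x_{j_r}$ (since $x_{j_r}\ne x_{j_\ell}$ for $\ell<r$ as the $x$'s are distinct, and $x_{j_r}\ne a_{j_\ell}$ as the two collections are disjoint), and is tight at $\mathbf f$ --- contradicting that $x_{j_r}$ can transfer to $a_{j_r}$ at $\mathbf f$. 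To see $S^*$ is tight at $\mathbf f$: each witness set $W_\ell$ (tight at $\mathbf f$, with $a_{j_\ell}\notin W_\ell$... wait, I need tight sets \emph{containing} $x_{j_\ell}$ and $a_{j_\ell}$ both) --- the right witnesses come from the fact that $x_{j_\ell}$ \emph{can} transfer to $a_{j_\ell}$, so after the transfer the set $\{x_{j_\ell},a_{j_\ell}\}$... hmm, actually the natural fact is: $x_{j_\ell}$ can transfer to $a_{j_\ell}$ at $\mathbf f$ implies that for the resulting hypertree, every set is tight at $\mathbf f$ iff tight at the new one \emph{provided} it contains both or neither of $x_{j_\ell},a_{j_\ell}$ (by the remark before Lemma~\ref{lem:tight}); the only tight sets that change are those separating $x_{j_\ell}$ from $a_{j_\ell}$. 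This is the structural fact I will lean on, applied iteratively. \textbf{The main obstacle} will be bookkeeping the interaction of several simultaneous transfers: specifically, verifying that the order hypothesis ``$x_j$ cannot transfer to $a_i$ for $i<j$'' is exactly what prevents a tight set from being dragged in a way that reintroduces some $x_{j_\ell}$ we wanted excluded, so that the iterated application of Lemma~\ref{lem:tight} closes up. I expect that once the right invariant is pinned down --- that tight sets of $\mathbf f$ and of $\mathbf f^{(i)}$ are related by adding/removing $x$'s and $a$'s in a triangular fashion governed by the index order --- the induction is routine submodular manipulation; finding and cleanly stating that invariant is the real work.
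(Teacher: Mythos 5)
Your overall framework---induct on $|J|$, perform the transfers in increasing index order, characterize the blocked last transfer by a set $S$ tight at $\mathbf g$ containing $a_{j_r}$ but not $x_{j_r}$, and then use Lemma~\ref{lem:tight} to move that obstruction back to $\mathbf f$---is exactly the skeleton of the paper's argument, and your target (``produce a set tight at $\mathbf f$ containing $a_{j_r}$ but not $x_{j_r}$'') is the right one. But the proposal has a genuine gap precisely where you admit it does: the pull-back mechanism. Your concrete candidate $S^*=S\cup\{\,x_{j_\ell}: a_{j_\ell}\in S,\ \ell\le r-1\,\}$ has no reason to be tight at $\mathbf f$: adding single elements to a tight set is not an operation that Lemma~\ref{lem:tight} (or submodularity in any form) controls, and the bookkeeping only yields $\sum_{e\in S^*}\mathbf f(e)\ge\mu(S)$, which does not reach $\mu(S^*)$ since $\mu$ is non-decreasing. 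Moreover this construction never actually uses the order hypothesis, which must enter somewhere, so it cannot be correct as stated; you sensed this and left ``the invariant'' unformulated, which is the real content of the lemma.

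The missing idea is to bring in \emph{witness} tight sets at $\mathbf f$ supplied by the order hypothesis, rather than modifying $S$ by hand. For each $\ell<r$ the hypothesis says $x_{j_r}$ cannot transfer valence to $a_{j_\ell}$ at $\mathbf f$, so there is a set $L_\ell$ tight at $\mathbf f$ with $a_{j_\ell}\in L_\ell$ and $x_{j_r}\notin L_\ell$. Put $L=L_1\cup\cdots\cup L_{r-1}$; it is tight at $\mathbf f$ by Lemma~\ref{lem:tight}, contains every $a_{j_\ell}$, and avoids $x_{j_r}$. Since the transfer from $x_{j_\ell}$ to $a_{j_\ell}$ \emph{is} possible at $\mathbf f$, any tight set containing $a_{j_\ell}$ must also contain $x_{j_\ell}$; hence $L$ contains all pairs $\{a_{j_\ell},x_{j_\ell}\}$ with $\ell<r$, so $\sum_{e\in L}\mathbf g(e)=\sum_{e\in L}\mathbf f(e)$ and $L$ is tight at $\mathbf g$ as well. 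Now Lemma~\ref{lem:tight} applied at $\mathbf g$ makes $L\cup S$ tight at $\mathbf g$, and because every transfer pair lies inside $L\subset L\cup S$, the sums of $\mathbf f$ and $\mathbf g$ over $L\cup S$ coincide, so $L\cup S$ is tight at $\mathbf f$. It contains $a_{j_r}$ and misses $x_{j_r}$ (not in $S$ by assumption, not in any $L_\ell$ by construction), contradicting that $x_{j_r}$ can transfer to $a_{j_r}$ at $\mathbf f$. This union-of-witnesses step is exactly the invariant you were looking for, and with it your induction closes; it is also precisely the paper's proof, phrased there via a smallest failing index instead of an explicit induction.
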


\begin{proof}
Without loss of generality we may assume that $J=\{\,1,\ldots,m\,\}$ and $m\ge2$. If the statement fails to be true then there exists a smallest index $k$ so that $\mathbf u=\mathbf f+\sum_{j=1}^{k-1}(\mathbf i_{\{a_j\}}-\mathbf i_{\{x_j\}})$ is a hypertree but $\mathbf f+\sum_{j=1}^{k}(\mathbf i_{\{a_j\}}-\mathbf i_{\{x_j\}})$ is not. The latter implies that there is a set $K$ of hyperedges that is tight at $\mathbf u$, contains $a_k$ but does not contain $x_k$. Now at $\mathbf f$, there exist tight sets $L_1,\ldots,L_{k-1}$ of hyperedges so that $L_i$ contains $a_i$ but does not contain $x_k$. By Lemma \ref{lem:tight}, $L=L_1\cup\cdots\cup L_{k-1}$ is also tight at $\mathbf f$. Since $a_1,\ldots,a_{k-1}\in L$, we have to have $x_1,\ldots,x_{k-1}\in L$ too for the first $k-1$ of our assumed transfers to be possible. That implies that $L$ is also a tight set at $\mathbf u$ and hence the same is true for $L\cup K$. But since $\mathbf u$ and $\mathbf f$ produce the same sum of values over elements of $L\cup K$, we eventually obtain that $L\cup K$ is a tight set at $\mathbf f$. That is a contradiction because $L\cup K$ separates $x_k$ from $a_k$.
\end{proof}

\section{The main result}\label{sec:main}


To prove Theorem \ref{thm:main}, it remains to show that the interior polynomial $I$ of the connected hypergraph $(V,E)$ satisfies
\begin{equation}\label{eq:goal}
I(x)=a_0+a_1x+\cdots+a_dx^d.
\end{equation} 
First we give the reduction of this claim to a somewhat technical result, Theorem~\ref{thm:tech}, and then proceed to prove the latter.

The sequence $a_0,a_1,\ldots,a_d$ of \eqref{eq:goal}, where first of all $d=|E|+|V|-2$ is the dimension of the root polytope $Q_G$, was defined in \eqref{eq:ehrhart} (and then again in \eqref{eq:again}) in terms of the Ehrhart polynomial of $Q_G$. However, in Theorem \ref{thm:h=E} we have already equated $\sum_{k=0}^da_kx^k$ to the right hand side of \eqref{eq:I=h} so that from now on we may think of $a_0,a_1,\ldots,a_d$ as the coefficient sequence of the $h$-vector $h_G$. In particular, as we have seen in Remark \ref{rem:shell}, if a triangulation of $Q_G$ has a shelling order then $a_k$ is the number of maximal simplices that are adjacent (through a facet) to exactly $k$ `previous' maximal simplices.


Let us fix a shellable triangulation $\mathscr T$ (such as a regular triangulation\footnote{Roughly speaking, a regular triangulation is one obtained by assigning a (generic) new coordinate to each vertex, thereby lifting them to one dimension higher, then taking their convex hull and projecting `back down' its upper boundary. Not all triangulations of a root polytope are regular, not even for 
complete bipartite graphs \cite{san}.}, cf.\ \cite{swartz})
of $Q_G$ with $f$-vector $f(y)$ as in \eqref{eq:f}. Let the number of interior simplices of $\mathscr T$ of dimension $m$ be $\tilde f_m$ for $m=0,1,\ldots,d$. (Note that $\tilde f_0=0$ and $\tilde f_d=f_d$.) Similar to the proof of Theorem~\ref{thm:h=E} (and especially to Remark \ref{rem:shell}), we may use $\mathscr T$ in conjunction with Lemma \ref{lem:unimod} to count \emph{interior} lattice points in the dilation $s\cdot Q_G$ (where $s$ is a positive integer). This time we will count points along interior facets when the \emph{second} maximal simplex adjacent to the facet is encountered in the shelling order. By doing so we find that the number of such points is
\[\tilde\varepsilon_G(s)=a_0{s-1 \choose d}+a_1{s \choose d}
+\cdots+a_k{s-1+k \choose d}+\cdots+a_d{s-1+d \choose d}.\]
This is because of the following reason: in the interior of each maximal simplex there are ${s-1 \choose d}$ lattice points (cf.\ Remark \ref{rem:ehr}) whose convex hull is a $d$-dimensional simplex of sidelength $s-d-1$. (We may assume here that $s\ge d+1$ since from Ehrhart theory we know that the result of our lattice point count is a polynomial in $s$, determined uniquely by just finitely many of its values.) If the interior (to $s\cdot Q_G$) lattice points along $k$ facets of the maximal simplex are to be counted as well, the sidelength of the convex hull (which is still a simplex) increases by $k$ so that the number of lattice points becomes ${s-1+k \choose d}$.


On the other hand, using the obvious partition of the interior of $Q_G$ and Remark~\ref{rem:ehr}, the same quantity can also be expressed as 
\[\tilde\varepsilon_G(s)=\tilde f_d{s-1 \choose d}+\tilde f_{d-1}{s-1 \choose d-1}+\cdots+\tilde f_{d-l}{s-1 \choose d-l}+\cdots+\tilde f_1{s-1 \choose 1}.\]

Let us manipulate the first expression using Vandermonde's identity\footnote{This step is also found in one of Andrews's proofs \cite{andrews} of the Saalsch\"utz formula.}
\[{s-1+k \choose d}={k \choose 0}{s-1 \choose d}+{k \choose 1}{s-1 \choose d-1}+\cdots+{k \choose l}{s-1 \choose d-l}+\cdots+{k \choose k}{s-1 \choose d-k}\]
and equate coefficients of ${s-1 \choose d-l}$ (noting that these polynomials have different degrees in $s$ and hence they are linearly independent) to find that 
\[\tilde f_{d-l}={l\choose l}a_l+{l+1\choose l}a_{l+1}+\cdots+{d \choose l}a_d.\]
Notice that the right hand side is exactly the coefficient of $(x-1)^l$ in the Taylor expansion, centered at $1$, of $\sum_{k=0}^da_kx^k$. Therefore to show \eqref{eq:goal}, it suffices to prove that $I(x)$ has the same expansion, that is that the coefficient of $(x-1)^k$ in the Taylor expansion of $I(x)$ centered at $1$ is $\tilde f_{d-k}$ for all $k\ge0$ (here for negative $u$ we define $\tilde f_u=0$). In other words, what is left to prove is that
\begin{equation}
I^{(k)}(1)=k!\cdot\tilde f_{d-k}
\end{equation}
holds for all $k\ge0$. We note that the case when $k=0$ is settled in \cite{alex}.

Now if we recall Definition \ref{def:I} of the interior polynomial, we see that the Taylor coefficient in question is $\sum_{i=k}^{|E|-1}{i \choose k}\chi_{i}$, where $\chi_i$ is the number of those hypertrees in $(V,E)$ whose internal inactivity is exactly $i$. Put another way, it is the number of tuples consisting of a hypertree and exactly $k$ of its internally inactive hyperedges. Therefore Theorem \ref{thm:main} is a corollary of the following statement. (Note that from here on we will not rely on the condition of shellability.)

\begin{thm}\label{thm:tech}
Let $\mathscr T$ be a triangulation of the root polytope $Q_G$ of the connected bipartite graph $G$, and let $k\ge0$ be an integer. Then the number of codimension~$k$ interior simplices of \ $\mathscr T$ agrees with the number of pairs $(\mathbf f,S)$, where $\mathbf f$ is a hypertree in the hypergraph $(V,E)$ induced by $G$ and $S$ is a set of $k$ distinct internally inactive hyperedges with respect to $\mathbf f$. Here inactivity is defined with respect to an arbitrarily fixed order on $E$.
\end{thm}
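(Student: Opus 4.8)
The plan is to set up a bijection between codimension-$k$ interior simplices of $\mathscr T$ and pairs $(\mathbf f,S)$ as described, by a \emph{local} analysis at each maximal simplex. Fix the order on $E$. Given an interior simplex $\sigma$ of codimension $k$, corresponding to a cycle-free subgraph $\Sigma\subset G$ with $k+1$ connected components, one of which we designate (say, the one containing the smallest-indexed emerald vertex, or more usefully: one needs a canonical way to single out a maximal simplex containing $\sigma$). The key tool is the arborescence lemma (Lemma \ref{lem:arbo}): for a \emph{chosen} component $C$ of $\Sigma$ it produces a unique maximal simplex $\gamma_C\supset\sigma$ in $\mathscr T$ whose spanning tree $\Gamma_C$ has the property that every edge of $\Gamma_C\setminus\Sigma$ points its violet endpoint toward $C$. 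I would use this to attach to $\sigma$ a hypertree $\mathbf f$ (induced by $\Gamma_C$) together with a distinguished set of $k$ hyperedges: the emerald endpoints, facing away from $C$, of the $k$ edges of $\Gamma_C\setminus\Sigma$ (there are exactly $k$ such edges since $\Sigma$ has $k+1$ components and $\Gamma_C$ has one). The content of the theorem is then that, as $\sigma$ ranges over codimension-$k$ interior simplices, these $k$ hyperedges are precisely the internally inactive ones in a set $S$, and conversely.

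The heart of the argument is to show that the hyperedge $e$ sitting at the ``far'' emerald endpoint of an edge $\varepsilon\in\Gamma_C\setminus\Sigma$ is internally inactive with respect to $\mathbf f=\mathbf f(\Gamma_C)$, and to identify its favorite. For this I would combine the feeler machinery of Lemma \ref{lem:feeler} with the path lemma, Lemma \ref{lem:path}. Roughly: removing $\varepsilon$ from $\Gamma_C$ and re-attaching (since $\sigma = \gamma_C$ minus $k$ facets is still interior, each such removal yields a legal move in $\mathscr T$) corresponds to a transfer of valence away from $e$; the direction ``violet endpoint faces $C$'' forces the valence to flow toward the $C$-side, and a careful bookkeeping of which hyperedge receives it — using Lemma \ref{lem:path} to push the transfer along the path to the smallest possible target, and Lemma \ref{lem:feeler} to see that the feeler for $(\mathbf f,e)$ immediately exits through the facet obtained by deleting $\varepsilon$ — shows the favorite $e'$ of $e$ lies strictly below $e$ and, crucially, that $e'$ is \emph{not} itself one of the other $k-1$ distinguished hyperedges. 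That independence is what lets Lemma \ref{lem:cube} assemble the $k$ separate transfers into a genuine cube of $2^k$ hypertrees, matching the $2^k$ maximal simplices around $\sigma$ (or rather, matching $\sigma$ to the collection of codimension-$k$ faces one gets), and thereby pins down the bijection.

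For the reverse direction, starting from a pair $(\mathbf f,S)$ with $S=\{e_1,\dots,e_k\}$ inactive, I would take a spanning tree $\Gamma$ realizing $\mathbf f$ whose corresponding maximal simplex lies in $\mathscr T$, and use the favorites $e_i'$ together with Lemma \ref{lem:path} to locate, for each $e_i$, the first edge $\varepsilon_i$ on the $\Gamma$-path from $e_i$ to $e_i'$; deleting $\varepsilon_1,\dots,\varepsilon_k$ should yield a cycle-free $\Sigma$ of $k+1$ components whose simplex $\sigma$ is interior (interiority following from Lemma \ref{lem:feeler}, since each feeler crosses the corresponding facet, so $\sigma$ is a face of two distinct maximal simplices along each of its $k$ facets) — and one must check $\Gamma=\Gamma_C$ for the appropriate $C$, closing the loop via the uniqueness clause of Lemma \ref{lem:arbo}. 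One subtlety to handle: different spanning trees realizing $\mathbf f$ might a priori give different $\Sigma$'s, so the well-definedness of the map $(\mathbf f,S)\mapsto\sigma$ needs the arborescence lemma's uniqueness, and one should verify the two constructions are mutually inverse rather than merely counting.

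The main obstacle I anticipate is the combinatorial bookkeeping in the forward direction establishing that the $k$ distinguished emerald hyperedges are inactive \emph{with distinct favorites avoiding the distinguished set}, i.e.\ verifying the hypotheses of Lemma \ref{lem:cube} in the right order. This requires carefully ordering the edges $\varepsilon_i$ (presumably by distance to $C$ in $\Gamma_C^{\mathrm{red}}$, exactly as in the proof of Lemma \ref{lem:arbo}) and tracking how transferring valence along one path interacts with the tight sets controlling the others — the interplay of Lemmas \ref{lem:tight}, \ref{lem:tightcomp}, \ref{lem:path} via submodularity. Once that structural fact is in place, the matching of the $2^k$ cube vertices with the local picture of $\mathscr T$ around a codimension-$k$ face, and hence the counting bijection, should follow by comparing Minkowski cells as in Lemma \ref{lem:feeler} and Remark \ref{rem:rearrange}.
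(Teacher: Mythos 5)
Your overall architecture (feelers/favorites in one direction, the arborescence lemma in the other) is the right circle of ideas, but the central step of your forward map is false. You propose to canonically choose a component $C$ of $\Sigma$, apply Lemma \ref{lem:arbo} once to get $\Gamma_C$, and declare that the $k$ emerald endpoints of $\Gamma_C\setminus\Sigma$ facing away from $C$ form a set of internally inactive hyperedges whose favorites lie across the corresponding extra edges. The condition ``violet endpoint faces $C$'' does not imply this: a tight set at $\mathbf f(\Gamma_C)$ can block the transfer of valence even when the extra edge points toward the intended receiver. This is exactly the phenomenon of Example \ref{ex:badcase}: there the arborescence tree for the component containing $a$ is $\Gamma_3$, the hyperedge $b$ receives an extra edge pointing toward $a$, yet $b$ is internally \emph{active} with respect to $\mathbf f_3$ because $\{a,d\}$ is tight; the unique good extension is the different tree $\Gamma_4$. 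The paper explicitly flags this trap (``it is tempting to think that the tree $\Gamma_a$ \dots would satisfy the requirement but, unfortunately, that is not always the case'') and replaces the single application of Lemma \ref{lem:arbo} by an iterative procedure: choose the smallest favorite $a_1$, apply the arborescence lemma, extract a nested tight set $T_1$ that ``protects'' the transfers found so far, freeze $\Gamma_1\big|_{T_1}$, then repeat with the smallest hyperedge outside $T_1$, and so on until the tight set is all of $E$. Moreover, uniqueness of the good collection is a separate argument (via the smallest favorite and incompatible trees), not an automatic consequence of the uniqueness clause in Lemma \ref{lem:arbo}, since different collections correspond to different trees produced by different data.

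There is a second gap in your reverse direction. From $(\mathbf f,S)$ you delete the first edges $\varepsilon_i$ on the paths to the favorites and claim the resulting codimension-$k$ face is interior ``since each feeler crosses the corresponding facet.'' That only shows each of the $k$ facets of $\gamma_{\mathbf f}$ containing $\sigma$ is an interior facet; the intersection of interior facets can still lie in the boundary of $Q_G$, so interiority of $\sigma$ does not follow. The paper's well-definedness argument is precisely the missing work: using the favorites, Lemma \ref{lem:cube}, and a carefully ordered assignment of coefficients (via the weights of Lemma \ref{lem:coordinates} and the rearrangement of Remark \ref{rem:rearrange}) it produces a $k$-dimensional set of interior points of $Q_G$ meeting $\sigma$ in a relative interior point. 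Relatedly, your expectation that the $2^k$ hypertrees of the cube match ``the $2^k$ maximal simplices around $\sigma$'' is not justified: an interior codimension-$k$ face need not be contained in exactly $2^k$ maximal simplices, and the cube is used in the paper only to certify interiority, not to enumerate the star of $\sigma$. Finally, note that the canonical simplex attached to $\mathbf f$ should be the unique one containing the emerald marker $\mathbf f^+$, which resolves your worry about different realizations of $\mathbf f$ without invoking Lemma \ref{lem:arbo}.
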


\begin{proof}
As the case $k=0$ follows from \cite[Lemma 12.8 and Theorem 12.9]{alex}, we may assume that $k\ge1$.
We shall define a map $\sigma$ from the second named set to the first and show that it is one-to-one and onto.

If the pair $(\mathbf f,\{\,e_1,\ldots,e_k\})$ is as described in the Theorem, then first we find the unique maximal simplex $\gamma_{\mathbf f}$ in $\mathscr T$ which belongs to a spanning tree $\Gamma_{\mathbf f}$ of $G$ that induces $\mathbf f$. (See \cite{alex} or subsection \ref{ssec:cross}.) For each $1\le i\le k$, we construct the feeler for the pair $(\mathbf f,e_i)$. By Lemma~\ref{lem:feeler}, the $i$'th feeler leaves $\gamma_{\mathbf f}$ through a facet that corresponds to (removing) an edge $\varepsilon_i$ of $\Gamma_{\mathbf f}$ which is adjacent to $e_i$. Hence these $k$ facets are mutually different and thus their intersection is a codimension $k$ face $\sigma(\mathbf f,\{\,e_1,\ldots,e_k\})$ of $\mathscr T$.

\smallskip

\emph{Well-definedness of $\sigma$.} We need to show that the face $\sigma(\mathbf f,\{\,e_1,\ldots,e_k\})$ lies in the interior of $Q_G$. This is because one is able to find enough hypertrees near $\mathbf f$ (namely the endpoints of the $k$ feelers and some others) so that the convex hull of the corresponding emerald markers, which is $k$-dimensional 
and interior to $Q_G$, meets $\sigma(\mathbf f,\{\,e_1,\ldots,e_k\})$ in just one point and that point is in the relative interior of both sets. A detailed computation follows.

Let the favorite of $e_i$ be $e_i'$ ($1\le i\le k$). (Favorites were defined in Section \ref{sec:prep}.) The sets $\{\,e_1,\ldots,e_k\,\}$ and $\{\,e_1',\ldots,e_k'\,\}$ are disjoint by \cite[Lemma 5.2]{hypertutte}, which says that if a hypertree is such that the hyperedge $c$ can transfer valence to $b$ and $b$ can transfer valence to $a$, then $c$ can transfer valence to $a$ as well. As $\{\,e_1',\ldots,e_k'\,\}$ may have less than $k$ elements, let us also write it as $S'=\{\,a_1,a_2,\ldots,a_m\,\}$ where we assume $a_1<a_2<\cdots<a_m$. There is then an obvious partition of $S=\{\,e_1,\ldots,e_k\,\}$ into $m$ parts $S_1,\ldots,S_m$ so that the favorite of each element of $S_i$ is $a_i$.

For each $i=1,\ldots,m$, we may write an arbitrary linear combination $\mathbf v_i$ of the vectors $\mathbf i_{\{a_i\}}-\mathbf i_{\{e\}}$ for $e\in S_i$ so that the coefficients are positive 
and their sum is less than $1$. Then, by Lemma \ref{lem:cube}, we may add the sum of these vectors to the emerald marker $\mathbf f^+$ (cf.\ \eqref{eq:mark}) and obtain an interior point of $Q_G$ that way. 
It is enough to show that $\sigma(\mathbf f,\{\,e_1,\ldots,e_k\})$ contains one of these points.

Since for all $i=1,\ldots,m$, the hypertree $\mathbf f$ is such that no hyperedge from $K_i=S_{i+1}\cup\cdots\cup S_m$ can transfer valence to $a_i$, there exist sets of hyperedges that are tight at $\mathbf f$ and separate $a_i$ from the elements of $K_i$. The intersection $T_i'$ of these sets is also tight at $\mathbf f$ (by Lemma \ref{lem:tight}), contains $a_i$ (hence it contains $S_i$, too), and is disjoint from $K_i$. Lemmas \ref{lem:path} and \ref{lem:tightcomp} allow us to assume that $\Gamma_{\mathbf f}\big|_{T_i'}$ is connected: indeed, all hyperedges along paths in $\Gamma_{\mathbf f}$ between elements of $S_i$ and $a_i$ are such that they can transfer valence to $a_i$ and thus they have to be in $T_i'$. By letting $T_i=T_1'\cup\cdots\cup T_i'$, we get a nested sequence $T_1\subset\cdots\subset T_m$ of sets, each of which is tight (again by Lemma \ref{lem:tight}) at $\mathbf f$. Let us reiterate that 
\begin{equation}\label{eq:nested}
\text{for each }e_j\in S\text{, the path }p_{e_j}\text{ in }\Gamma_{\mathbf f}\text{ from }e_j\text{ to }e_j'\text{ lies in }\Gamma_{\mathbf f}\big|_{T_i}, 
\end{equation}
where $i$ is the index so that $e_j'=a_i$, i.e., that $e_j\in S_i$. 

We will construct the vectors $\mathbf v_i$ mentioned above inductively, starting from $\mathbf v_m$. (One way to think about this next part of the proof is that we will build a polygonal path in $P_E$ from $\mathbf f^+$ to $\sigma(\mathbf f,\{\,e_1,\ldots,e_k\})$, that is from an interior point of the Minkowski cell $M_{\Gamma_{\mathbf f}}$ of \eqref{eq:bigcell} to a codimension $k$ stratum of its boundary. We will choose the length of each segment carefully so that the path stays within the cell.)
The set $S_m$ has a partial order $<$ defined by the rooted tree $(\Gamma_{\mathbf f},a_m)$, namely we let $x<y$ if the path in $\Gamma_{\mathbf f}$ from $y$ to $a_m$ passes through $x$. The coefficients of the vectors $\mathbf i_{\{a_m\}}-\mathbf i_{\{e\}}$ in $\mathbf v_m$ (where $e\in S_m$) are again defined inductively starting from the maximal elements of $S_m$. If $e$ is one of those and the weight, in the de\-scrip\-tion of $\mathbf f^+$ provided by Lemma \ref{lem:coordinates}, is $u$ on the edge $\varepsilon$ that starts the path $p_e$ from $e$ to $a_m$ in $\Gamma_{\mathbf f}$, then let the coefficient of $\mathbf i_{\{a_m\}}-\mathbf i_{\{e\}}$ be $u$. Because (by the definition of the partial order and the observation \eqref{eq:nested} above) none of the paths $p_{e'}$ for $e\ne e'\in S$ passes through $e$, in particular none of them contains $\varepsilon$, this is the only time in the process that the weight on $\varepsilon$ changes (cf.\ \eqref{eq:feeler} in the proof of Lemma \ref{lem:feeler}). Hence our choice $u$ for the coefficient of $\mathbf i_{\{a_m\}}-\mathbf i_{\{e\}}$ is the unique way we can reduce the weight on $\varepsilon$ to $0$.

As discussed in the proof of Lemma \ref{lem:feeler}, edge weights (barycentric coordinates) representing the point $\mathbf f^++u(\mathbf i_{\{a_m\}}-\mathbf i_{\{e\}})$ of $\gamma_{\mathbf f}$ are positive except for that of $\varepsilon$. As explained in Remark \ref{rem:rearrange}, this is `demonstrated' by applying the formula of Lemma~\ref{lem:coordinates} to a rearranged tree, where we erase $\varepsilon$ and transport the part $C$ of $\Gamma_{\mathbf f}$ that `lies beyond' $e$ to the favorite $a_m$ of $e$. Since paths $p_{e'}$ for $e\ne e'\in S$ (along which weight changes are due to occur) are either disjoint from, or contained in $C$, this transporting operation does not disrupt the structure of the tree that is relevant for the rest of the construction.

Next, we iterate the process that was described in the previous two paragraphs. There will be one step for each element $e$ of $S_m$ and it will occur after all elements of $S_m$ above $e$ in the partial order have been dealt with. During the step we determine the coefficient $u_e$ of the vector $\mathbf i_{\{a_m\}}-\mathbf i_{\{e\}}$ in the linear combination $\mathbf v_m$, namely we define $u_e$ to be the current weight on the edge $\varepsilon_e$ which is adjacent to $e$ and points toward $a_m$. In light of \eqref{eq:feeler} and the fact that $\varepsilon_e$ is not part of the path $p_h$ for any element $h\in S$ that we will address later (which is, again, guaranteed by \eqref{eq:nested} and the partial order), this is the unique choice that results in the weight on $\varepsilon_e$ becoming $0$. At the end of each step we update the weights on the edges of $\Gamma_{\mathbf f}$ using \eqref{eq:feeler}, or rather on the edges of the tree that results from our previous transports, and note that these new weights are the same as what the recipe of Lemma \ref{lem:coordinates} produces for the rearranged tree where we erase $\varepsilon_e$ and identify $e$ with $a_m$, denoting the new point by $a_m$ and increasing its multiplicity by $1$. In particular, all edge weights in the rearranged tree are positive.

After we have assigned coefficients to each vector $\mathbf i_{\{a_m\}}-\mathbf i_{\{e\}}$ where $e\in S_m$, noting that they are all positive, we also have to make sure that the sum of the coefficients is less than $1$. That is because it is equal to the proportion, among all elements of $E$, of the emerald vertices of all the (disjoint) subtrees that have been transported to $a_m$ so far in the process. This set includes the vertices that were merged with $a_m$ but it does not include the `original' vertex $a_m$ itself, and hence its proportion in $E$ is less than $1$.

After this we continue performing the same operations as above (to the rearranged tree and the updated weights) for $a_{m-1}$, $a_{m-2}$ and so forth down to $a_1$. By the exact same arguments as above, each vector $\mathbf i_{\{a_i\}}-\mathbf i_{\{e\}}$, for $e\in S_i$, will receive a positive coefficient in the linear combination $\mathbf v_i$. The sum of the coefficients, for each given $i$, is strictly less than $1$. These coefficients are also the unique choices that result in a weight of $0$ on each edge $\varepsilon_i$, $1\le i\le k$, i.e., which guarantee that $\mathbf f^++\sum_{i=1}^k\mathbf v_i$ is a point of the simplex $\sigma(\mathbf f,S)$. We also see that it is an interior point of said simplex because the weights on edges other than the $\varepsilon_i$ remain positive at the end. This completes the proof of well-definedness for our correspondence $\sigma$.

\smallskip

Let now $\sigma$ be a codimension $k$ interior face of $\mathscr T$. Let the edges of $G$ which correspond to the vertices of $\sigma$ form the $(k+1)$-component forest $\Sigma$. From the point of view of $\Sigma$, the statement that $\sigma=\sigma(\mathbf f,\{\,e_1,\ldots,e_k\})$ for some pair $(\mathbf f,\{\,e_1,\ldots,e_k\})$ can be described as follows. We add $k$ distinct edges $\varepsilon_1,\ldots,\varepsilon_k$ of $G$ to $\Sigma$ to obtain a spanning tree $\Gamma$ which is a realization of $\mathbf f$, and which is also such that the corresponding maximal simplex is in $\mathscr T$. For each $i$, the emerald endpoint of $\varepsilon_i$ is $e_i$; furthermore and most crucially, $\varepsilon_i$ is the first edge along the unique path in $\Gamma$ that connects $e_i$ to its favorite (in relation to $\mathbf f$) hyperedge. In particular, the emerald endpoints of the `extra' edges $\varepsilon_1,\ldots,\varepsilon_k$ are all distinct and, obviously, $\Sigma$ and the extra edges determine the pair $(\mathbf f,\{\,e_1,\ldots,e_k\})$. Let us call the collection $\varepsilon_1,\ldots,\varepsilon_k$ \emph{good} with respect to $\sigma$ if it fits the description above (with the given triangulation~$\mathscr T$ and for the pair $(\mathbf f,\{\,e_1,\ldots,e_k\})$ that the collection determines).

We need to show that for the given $\sigma$, there exists a unique collection $\varepsilon_1,\ldots,\varepsilon_k$ of edges not in $\Sigma$ that is good in the sense of the previous paragraph. We start with uniqueness.

\smallskip

\emph{Injectivity of $\sigma$.}
Let $\varepsilon_1,\ldots,\varepsilon_k$ and $\delta_1,\ldots,\delta_k$ be two different good collections. If we enlarge $\Sigma$ with the edges common to them, their remaining parts are still good collections for this larger forest/interior cell. (Note that favorites of hyperedges are decided not by $\Sigma$ but by the actual spanning tree containing it and the latter remains the same even as we declare $\Sigma$ to be bigger.) Hence we may assume that the two collections are disjoint. 

Let the hyperedge $a$ be the smallest favorite that occurs among simplices of $\mathscr T$ containing $\sigma$. (By this we mean the favorites of those hyperedges that received an extra edge in the extension and became internally inactive with respect to the hypertree induced by the extension. That set is non-empty because $k\ge1$ and the emerald endpoint of $\varepsilon_1$ has a favorite different from itself.) 
Let $\Gamma_a$ be the spanning tree in $G$ provided by Lemma~\ref{lem:arbo} applied to $\Sigma$ and its component containing $a$. That is, $\Gamma_a$ represents a maximal simplex in $\mathscr T$, contains $\Sigma$, and has all its edges beyond $\Sigma$ face $a$ with their violet endpoint. We are going to argue that $\Gamma_a$ and the spanning trees $\Gamma^\varepsilon=\Sigma\cup\{\,\varepsilon_1,\ldots,\varepsilon_k\,\}$ and $\Gamma^\delta=\Sigma\cup\{\,\delta_1,\ldots,\delta_k\,\}$ cannot all be compatible (i.e., it is not possible for all three pairs to satisfy the condition given in Lemma \ref{lem:compa}), which will contradict their coexistence in $\mathscr T$.

Let the emerald endpoints of the $\varepsilon_i$ form the set $S^\varepsilon\subset E$, and let the emerald endpoints of the $\delta_j$ form the set $S^\delta\subset E$. Within these, let $S^\varepsilon_a$ and $S^\delta_a$ denote the set of those hyperedges whose favorite is $a$. If $e\in S^\varepsilon_a$, then by Lemma \ref{lem:path}, all elements of $S^\varepsilon$ along the path $p_e$ in $\Gamma^\varepsilon$ from $e$ to $a$ also belong to $S^\varepsilon_a$, and hence the edges $\varepsilon_i$ adjacent to them belong to $p_e$ so that their violet endpoints face $a$. Similar observations hold true for $S^\delta_a$. If a component of $\Sigma$ contains an element of $S^\varepsilon_a$ as well as an element of $S^\delta_a$ then, just like in the proof of Lemma \ref{lem:arbo}, it is easy to see that $\Gamma^\varepsilon$ and $\Gamma^\delta$ are not compatible trees. In the same way we obtain that 
\begin{multline}\label{eq:text}
\text{the edges }\varepsilon_i\text{ adjacent to elements of }S^\varepsilon_a\text{, as well as the edges }\delta_j\\
\text{adjacent to elements of }S^\delta_a\text{, are edges in }\Gamma_a\text{, too.}
\end{multline}

Let now $\mathbf f^\varepsilon$ be the hypertree induced by $\Gamma^\varepsilon$. For each element $e\in S^\varepsilon\setminus S^\varepsilon_a$, there is a set of hyperedges that is tight at $\mathbf f^\varepsilon$, contains $a$, and does not contain $e$. The intersection $T^\varepsilon$ of these sets is also tight by Lemma \ref{lem:tight}, it is disjoint from $S^\varepsilon\setminus S^\varepsilon_a$, and it contains $S^\varepsilon_a$ for otherwise those hyperedges could not transfer valence to $a$. By the observation \eqref{eq:text}, we see that $\sum_{e\in T^\varepsilon}\mathbf f^\varepsilon(e)\le\sum_{e\in T^\varepsilon}\mathbf f_a(e)$ but since $\sum_{e\in T^\varepsilon}\mathbf f^\varepsilon(e)$ is already at the maximal value $\mu(T^\varepsilon)$ allowed for any hypertree, it follows that
\begin{itemize}[leftmargin=10pt]
\item $T^\varepsilon$ is also a tight set at the hypertree $\mathbf f_a$ induced by $\Gamma_a$ and
\item $\Gamma_a\big|_{T^\varepsilon}=\Gamma^\varepsilon\big|_{T^\varepsilon}$ consists only of edges of $\Sigma$ and the $\varepsilon_i$ adjacent to elements of $S^\varepsilon_a$.
\end{itemize}
By the same token there is a set $T^\delta\subset E$ that is tight at $\mathbf f_a$, contains $a$, and intersects $S^\delta$ in exactly $S^\delta_a$. By Lemma \ref{lem:tight}, $T=T^\varepsilon\cap T^\delta$ is also tight at $\mathbf f_a$. It contains $a$ and since $S^\varepsilon_a\cap S^\delta_a=\varnothing$, the forest $\Gamma_a\big|_T$ consists only of edges of $\Sigma$. However that means that $a$ cannot receive a transfer of valence at any hypertree induced by an extension of $\Sigma$, which contradicts the definition of $a$.

\smallskip

\emph{Surjectivity of $\sigma$.}
It remains to show that any $(k+1)$-component forest $\Sigma\subset G$, provided that it represents an interior face of $\mathscr T$, can be extended to a spanning tree by a good collection of edges so that the corresponding maximal simplex occurs in $\mathscr T$. It is tempting to think that the tree $\Gamma_a$ defined above (i.e., the set of its edges beyond $\Sigma$) would satisfy the requirement but, unfortunately, that is not always the case (see Example~\ref{ex:badcase} below). Instead, we devise the following iterative procedure.

Let $a_1=a$ be the `smallest favorite' hyperedge and $\Gamma_1=\Gamma_a$ the spanning tree that we used before. 
In the set $S_1$ of the $k$ emerald endpoints of the edges in $\Gamma_1\setminus\Sigma$, let us consider those that cannot transfer valence to $a_1$. These are separated from $a_1$ by sets that are tight at the hypertree $\mathbf f_1=\mathbf f_a$ induced by $\Gamma_1$. The intersection $T_1$ of these sets is also tight at $\mathbf f_1$, it contains $a_1$, and hence it intersects $S_1$ in exactly those of its elements that can transfer valence to $a_1$ at $\mathbf f_1$. By Lemmas \ref{lem:path} and 
\ref{lem:tightcomp}, we may assume that $\Gamma_1\big|_{T_1}$ is connected. 
We add to $\Sigma$ the edges of $\Gamma_1\setminus\Sigma$ that are adjacent to elements of $S_1\cap T_1$. During the rest of the construction we will keep the subgraph $\Gamma_1\big|_{T_1}$ fixed so that $T_1$ remains a tight set at subsequent hypertrees, too. This serves to `protect' the transfers of valence that we have found from elements of $S_1\cap T_1$ to $a_1$, as well as to maintain the situation that the extra edges adjacent to those emerald vertices point toward $a_1$.
Indeed, if an element of $S_1\cap T_1$ got separated from $a_1$ by a tight set at a later stage, then the intersection of that set with $T_1$ would also be tight at that hypertree; but then the same set would be tight at $\mathbf f_1$ as well, contradicting the possibility of the transfer at $\mathbf f_1$.

Next, let $a_{2}$ be the smallest hyperedge not in $T_1$. Let us apply Lemma \ref{lem:arbo} to the forest $\Sigma\cup(\Gamma_1\big|_{T_1})$ and its component containing $a_{2}$ to obtain the spanning tree~$\Gamma_2$ that induces the hypertree $\mathbf f_2$. There is again a tight set $T'_2$ of hyperedges that contains $a_2$ and all emerald endpoints of the edges in $\Gamma_2\setminus\Sigma\setminus(\Gamma_1\big|_{T_1})$ that can transfer valence to $a_2$ at $\mathbf f_2$ but does not contain any of those that cannot. We may assume that $\Gamma_2\big|_{T'_2}$ is connected. We let $T_2=T_1\cup T'_2$; by Lemma \ref{lem:tight}, this set is also tight at $\mathbf f_2$, as well as at later hypertrees. The set $T_2$ will protect the transfers of valence that we have just found to $a_2$. The set $T_1$, on the other hand, will guarantee that the givers of those transfers will not find receivers smaller than $a_2$ at any time during the rest of the process.

We iterate our construction, always taking the smallest hyperedge $a_{i+1}$ outside of the last element $T_i$ of our sequence of nested tight sets. In each step $a_{i+1}$, possibly with other hyperedges, will be added to $T_i$ to form the next tight set $T_{i+1}$. Thus the sequence $\{T_i\}$ is strictly increasing, and eventually the tight set will be $E$ itself. At that stage, all emerald vertices that have an extra edge adjacent to them have it point in the direction of their favorite. This completes the proof of surjectivity and hence the proof of the Theorem, as well as the proof of Theorem \ref{thm:main}.
\end{proof}

\begin{ex}\label{ex:badcase}
We use the plane bipartite graph $G$ of Example \ref{ex:12a1097} to illustrate some of the subtlety of the proof of Theorem \ref{thm:tech}. Let us consider one of the tri\-angul\-ations provided by \cite[Theorem~1.1]{homfly}. Namely, we orient the edges of the dual graph $G^*$ so that each has an emerald point to its right, we fix the root of $G^*$ in the outside region, construct all (say, outgoing) spanning arborescences relative to it, and then consider the spanning trees of $G$ that are dual to those. The maximal simplices of $Q_G$ that correspond to the latter form a triangulation $\mathscr T$.

\begin{figure}[htbp]
\labellist
\small
\pinlabel $a$ at 900 560
\pinlabel $b$ at 1170 830
\pinlabel $c$ at 830 1170
\pinlabel $d$ at 850 950
\pinlabel $\Gamma_1$ at -80 270
\pinlabel $\Gamma_2$ at 1830 270
\pinlabel $\Gamma_3$ at 1830 1480
\pinlabel $\Gamma_4$ at -80 1480
\endlabellist
   \centering
   \includegraphics[width=2in]{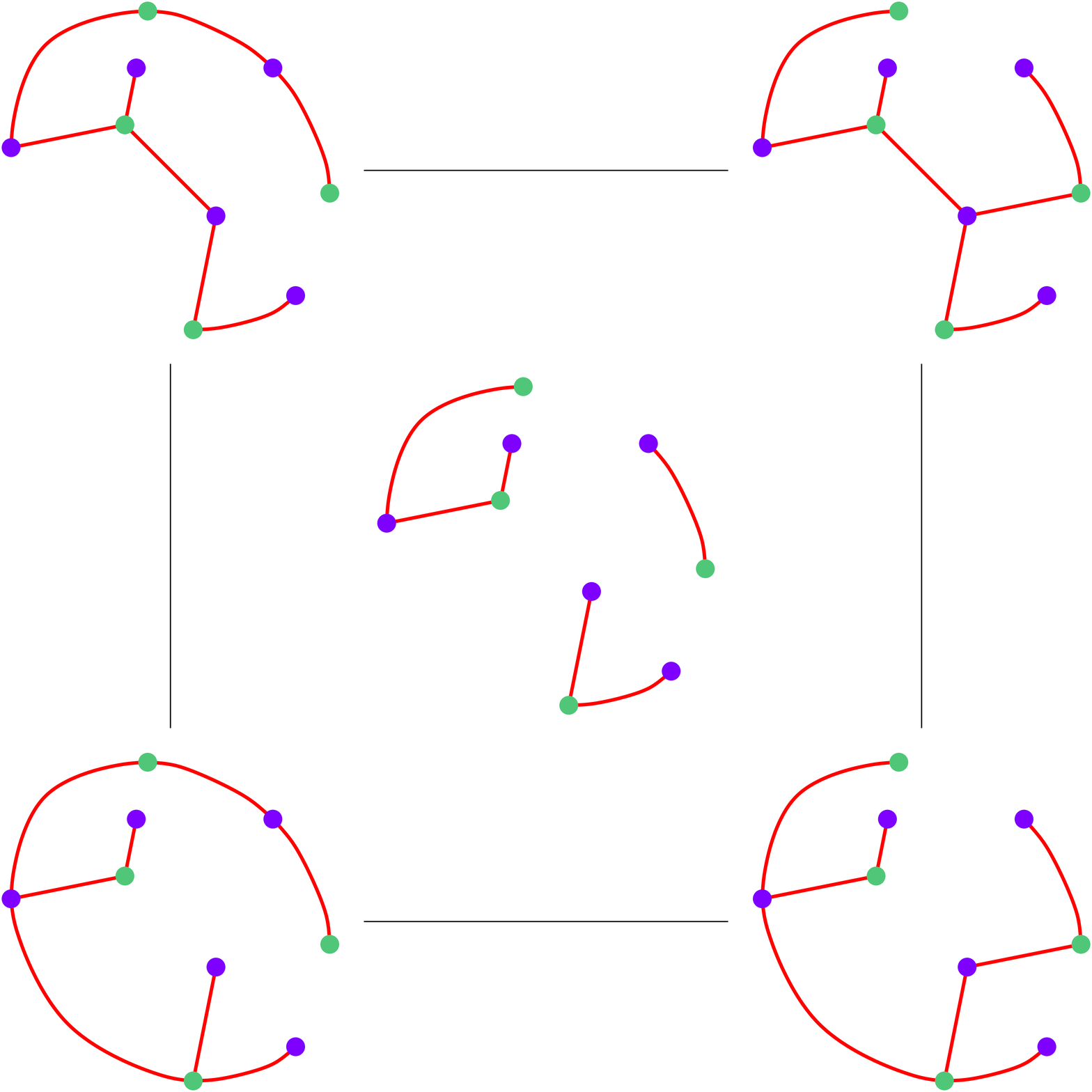} 
   \caption{A codimension $2$ interior simplex and the four maximal simplices containing it, each represented by the corresponding cycle-free subgraph.}
   \label{fig:badcase}
\end{figure}

We let $\Sigma$ be the three-component forest shown in the middle of Figure \ref{fig:badcase}. The corresponding codimension $2$ simplex $\sigma$ lies in the interior of $Q_G$ and it is contained in exactly four maximal simplices. These are represented by the corresponding spanning trees $\Gamma_i$, $1\le i\le4$. The four black line segments indicate adjacency through a facet. Let us also use the symbol $\mathbf f_i$ to denote the hypertree (in the hypergraph with emerald hyperedges) induced by $\Gamma_i$.

Let us choose the order $a<b<c<d$ on the emerald color class $E$. First we illustrate the last (surjectivity) part of the proof. The hypertree $\mathbf f_2$ is the one that extends the valence distribution of $\Sigma$ at the lexicographically smallest multiset, namely $\{\,a,b\,\}$. That hypertree does not in general have to be realized by a maximal simplex adjacent to $\Sigma$ and in any case, only $d$ is internally inactive with respect to $\mathbf f_2$ and $d$ did not even receive an extra edge when $\Sigma$ was extended to $\Gamma_2$. The spanning tree that results if we apply Lemma \ref{lem:arbo} to the component of $\Sigma$ containing $a$ is $\Gamma_3$. The only internally inactive hyperedge with respect to $\mathbf f_3$ is $d$. Indeed, the set $\{\,a,d\,\}$ is tight at $\mathbf f_3$ so that $b$ cannot transfer valence to $a$, regardless of the fact that it received an extra edge pointing toward $a$. The unique good collection of extra edges for $\Sigma$ is in fact the one that extends it to $\Gamma_4$. With respect to $\mathbf f_4$, the hyperedge $d$ is internally inactive and its favorite is $a$, but since the set $\{\,a,d\,\}$ is still tight, the favorite of the other internally inactive hyperedge, $c$, ends up being $b$. In the case of $\Gamma_4$ the extra edges are received exactly by $d$ and $c$ and they indeed point toward their respective favorites.

This same example is a good illustration of the first (well-definedness) part of the proof, too. If we think of the black line segments of Figure \ref{fig:badcase} as connections between the appropriate emerald markers $\mathbf f_i^+$, then in that sense they form a proper regular square. Each side is in fact a feeler (in this case neither crosses into more than two maximal simplices). For example with respect to $\mathbf f_1$, the hyperedge $c$ is internally inactive and its favorite is $b$. Since a transfer of valence at $\mathbf f_1$ from $c$ to $b$ results in $\mathbf f_2$, the corresponding feeler connects $\mathbf f_1^+$ to $\mathbf f_2^+$. In $S_E$ (cf.\ \eqref{eq:cross}), the $a$-coordinates of the emerald markers dilated from $\mathbf f_1^+$ and $\mathbf f_2^+$ are both $\frac15\left(2+\frac14\right)=0.45$, and the $a$-coordinates of the points corresponding to $\mathbf f_3^+$ and $\mathbf f_4^+$ are both $\frac15\left(1+\frac14\right)=0.25$, cf.\ \eqref{eq:mark} and \eqref{eq:smallmarker}, whereas any point along $\sigma\cap S_E$ has $a$-coordinate $\frac15+\frac15=0.4$ ($\sigma$ is parametrized by weights on the edges of $\Sigma$ and along $S_E$, the sum of the weights on edges adjacent to the same violet vertex has to be $1/5$). This and a similar computation of $b$-coordinates show that $\sigma$ intersects the square formed by the four feelers, viewed as a subset of $S_E\subset Q_G$, at the midpoint of the southeast quarter square. (The intersection point is interior to $\sigma$ because its barycentric coordinates are $1/10$ on the two edges connecting $c$ and $d$ and $1/5$ on the other four edges.) In particular, as the triangle spanned by the two feelers adjacent to $\mathbf f_4^+$ is disjoint from $\sigma$, we need $\mathbf f_2^+$ (i.e., an application of Lemma \ref{lem:cube}) as well to demonstrate that $\sigma$ is an interior simplex.
\end{ex}


\begin{ex}\label{saal}
Let $G$ be the complete bipartite graph with color classes of size $m+1$ and $n+1$, respectively. It is not hard to verify (see \cite[Example 7.2]{hypertutte}) that in the interior polynomial $I(\xi)$ of either hypergraph that $G$ induces, the coefficient of $\xi^k$ is ${m \choose k}{\,n\, \choose k}$. On the other hand, the root polytope $Q_G$ is the product of an $m$- 
and an $n$-dimensional unit simplex, so that the number of lattice points in $s \cdot Q_G$ is ${s+m \choose m}{s+n \choose n}$ 
and thus the Saalsch\"utz formula \eqref{saalschutz} follows from \eqref{eq:ehrhart} and \eqref{eq:goal}. 

It is in fact possible to give a proof of the classical Saalsch\"utz formula using the point of view of this paper but without relying on the interior polynomial and Theorem \ref{thm:main}. Let us consider the following concrete triangulation of $Q_G$ \cite{crossingless}. We denote the color classes of $G$ by $E=\{\,e_0,e_1,\ldots,e_m\,\}$ and $V=\{\,v_0,v_1,\ldots,v_n\,\}$. We fix two horizontal lines on the plane and write the symbols for the elements of $E$ and $V$, respectively, on them from left to right in the indicated order. We call a subgraph of $G$ \emph{non-crossing} if the straight line segments in our figure that represent its edges only intersect at endpoints. The maximal simplices that correspond to two non-crossing spanning trees share a common facet if and only if the two trees differ by a single (obviously defined) `$\text{N}\leftrightarrow\text{\reflectbox{N}}$ transition.' A non-crossing spanning tree can be uniquely described by a \emph{zigzag}, that is a non-crossing path in $G$ containing the leftmost edge $e_0v_0$ and the rightmost edge $e_mv_n$. Here the zigzag is a subgraph of the corresponding tree and the degree $2$ vertices of the zigzag are exactly the $\text{degree}\ge2$ vertices of the tree.

It turns out \cite{crossingless} that the collection of simplices in $Q_G=\Delta_E\times\Delta_V$ that correspond to non-crossing spanning trees is a shellable triangulation. For example, it is not hard to verify that the hypertree order \eqref{eq:order} induces a shelling order. 
It is also not hard to show that in order for a simplex to have $k$ adjacent (through a facet) simplices that are smaller in the shelling order, its zigzag has to have exactly $k$ degree $2$ vertices among $e_1,\ldots,e_m$. Once we have fixed those, the zigzag will be uniquely determined by the choice of its $k$ degree $2$ vertices among $v_0,\ldots,v_{n-1}$. Hence we see that there are ${m \choose k}{n \choose k}$ such trees and therefore \eqref{saalschutz} follows by the argument outlined in Remark \ref{rem:shell}.

The version \eqref{otherform} of Saalsch\"utz's identity can be derived in essentially the same way, by counting interior lattice points in $(q+1)Q_G$ with the help of the same triangulation as above, using the process explained at the beginning of this section.
\end{ex}

The following obvious corollary of Theorem \ref{thm:main} was stated as Conjecture 7.1 in \cite{hypertutte} and was also mentioned in \cite{jkr,homfly}.

\begin{cor}\label{cor:duality}
Any pair $\mathscr H,\overline{\mathscr H}$ 
of abstract dual (transpose) hypergraphs satisfies \[I_{\mathscr H}(x)=I_{\overline{\mathscr H}}(x).\]
\end{cor}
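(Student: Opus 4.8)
The plan is to deduce this immediately from Theorem~\ref{thm:main}. First I would note that $I_{\mathscr H}$ is only defined when $\bip\mathscr H$ is connected, so connectedness is a standing hypothesis here; under it, the transpose operation does not change the associated bipartite graph, i.e.\ $\bip\mathscr H=\bip\overline{\mathscr H}$ as bipartite graphs. Writing $G$ for this common graph and $E,V$ for its two color classes, the hypergraphs $\mathscr H=(V,E)$ and $\overline{\mathscr H}=(E,V)$ are precisely the two hypergraphs induced by $G$ in the sense of \eqref{eq:absdual}: passing to the transpose simply interchanges which color class is declared to be the set of hyperedges and which the set of vertices.

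Then I would apply Theorem~\ref{thm:main} to $G$. Its conclusion, formula \eqref{eq:I=h}, is that the interior polynomial of \emph{either} hypergraph induced by $G$ equals $x^{|E|+|V|-1}h_G(x^{-1})$. The right-hand side is manifestly symmetric under swapping the roles of $E$ and $V$: the exponent $|E|+|V|-1$ is unchanged, and $h_G$ is the common $h$-vector of the triangulations of the root polytope $Q_G$ (Theorem~\ref{thm:h=E}), which is built from $G$ with no reference to a distinguished color class. Hence $I_{\mathscr H}(x)=x^{|E|+|V|-1}h_G(x^{-1})=I_{\overline{\mathscr H}}(x)$, as claimed.

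There is no real obstacle: all the content has already been carried by Theorem~\ref{thm:main} (and, through it, by Theorems~\ref{thm:h=E} and~\ref{thm:tech}). The only point worth a sentence of care is the purely bookkeeping observation that the ``transpose of a hypergraph'' appearing in the corollary and the ``two hypergraphs induced by a bipartite graph'' appearing in the theorem describe the very same pair of objects, so that the single identity \eqref{eq:I=h} applies to both members of the pair.
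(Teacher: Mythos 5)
Your proposal is correct and is exactly the paper's route: the corollary is stated there as an immediate consequence of Theorem~\ref{thm:main}, since both hypergraphs induced by the common bipartite graph $G=\bip\mathscr H=\bip\overline{\mathscr H}$ have interior polynomial equal to $x^{|E|+|V|-1}h_G(x^{-1})$, which is symmetric in the two color classes. Your bookkeeping remark identifying the transpose pair with the two induced hypergraphs is the only point needed, and you handle it correctly.
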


Now that we know that the interior polynomial is not just an invariant of a hypergraph but actually an invariant of the underlying bipartite graph, it becomes an interesting problem to express individual coefficients directly in terms of the graph (i.e., without either breaking the symmetry between the color classes or using the root polytope). As we recalled (and re-proved) in Remark \ref{rem:coefficients}, such formulas are known \cite{hypertutte} for the constant term (which is always $1$) and the linear coefficient (which is the first Betti number). Here we present one for the quadratic coefficient. 

\begin{prop}\label{pro:a_2}
Let the connected bipartite graph $G$ have first Betti number (nullity) $b_1$ and let it have $N$ cycles of length four. Then the quadratic coefficient $a_2$ in the common interior polynomial of the hypergraphs induced by $G$ is ${b_1+1 \choose 2}-N$.
\end{prop}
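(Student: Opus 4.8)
The plan is to compute $a_2$ from the formula \eqref{eq:coefficients} of Theorem \ref{thm:h=E}, which gives $a_2=\binom{d+1}{2}-d\,f_0+f_1$, where $d=|E|+|V|-2$, $f_0$ is the number of vertices of $Q_G$ (i.e.\ the number of edges of $G$), and $f_1$ is the number of edges in any triangulation $\mathscr T$ of $Q_G$ (i.e.\ the number of $1$-simplices). Since $f_0$ equals the number of edges of $G$ and $d+1=|E|+|V|-1$ equals the number of edges in a spanning tree, we have $f_0=(d+1)+b_1$, so $\binom{d+1}{2}-d\,f_0 = \binom{d+1}{2}-d(d+1)-d\,b_1 = -\binom{d+1}{2}-d-d\,b_1$ after simplification; thus everything reduces to counting $f_1$, the number of edges of $\mathscr T$.

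The key observation is that $f_1$ is \emph{not} triangulation-dependent in the relevant way: an edge of $\mathscr T$ is a $1$-simplex spanned by two vertices of $Q_G$, i.e.\ by two edges $\varepsilon,\varepsilon'$ of $G$ that are affinely independent. By \cite[Lemma 12.5]{alex} (quoted in the excerpt), two vertices are affinely independent iff the corresponding two edges of $G$ are cycle-free, which for two edges simply means they are \emph{distinct} (two distinct edges can never form a cycle). So every pair of distinct edges of $G$ spans a $1$-simplex $\sigma$ in $Q_G$, and $\sigma$ appears in $\mathscr T$ unless it is a \emph{chord} cut off by the triangulation --- but a $1$-simplex is never subdivided, it is either a face of $\mathscr T$ or it passes through the interior of higher-dimensional cells. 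Precisely: $\sigma=\overline{\mathbf v_\varepsilon\mathbf v_{\varepsilon'}}$ fails to be an edge of $\mathscr T$ exactly when its relative interior meets the interior of some cell, which happens iff $\varepsilon$ and $\varepsilon'$ are \emph{not compatible} in the sense of Lemma \ref{lem:compa}, i.e.\ iff there is a cycle $\varepsilon_1,\ldots,\varepsilon_{2k}$ with odd-index edges among $\{\varepsilon\}$ and even-index edges among $\{\varepsilon'\}$ --- and with only two edge-classes available this forces $k=2$ and the cycle to be $\varepsilon,\varepsilon',\varepsilon,\varepsilon'$... which is impossible. Hence I must be more careful: the correct statement is that $\sigma$ is \emph{not} a face of the triangulation iff the segment $\overline{\mathbf v_\varepsilon\mathbf v_{\varepsilon'}}$ is a diagonal of a square face, i.e.\ iff $\varepsilon,\varepsilon'$ are the two "diagonal" edges of a $4$-cycle of $G$ (the $4$-cycle $\varepsilon\,\delta\,\varepsilon'\,\delta'$ in $G$ gives rise to a square $2$-face $\mathbf i_\varepsilon+\mathbf i_\delta,\ \mathbf i_\delta+\mathbf i_{\varepsilon'},\ \mathbf i_{\varepsilon'}+\mathbf i_{\delta'},\ \mathbf i_{\delta'}+\mathbf i_\varepsilon$ of $Q_G$, whose two diagonals connect $\mathbf i_\varepsilon+\mathbf i_\delta$ to $\mathbf i_{\varepsilon'}+\mathbf i_{\delta'}$ and $\mathbf i_\delta+\mathbf i_{\varepsilon'}$ to $\mathbf i_{\delta'}+\mathbf i_\varepsilon$, and each triangulation picks exactly one of the two). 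The main work is to verify this dichotomy rigorously and to check that distinct $4$-cycles give distinct "forbidden" pairs, so that exactly $N$ of the $\binom{f_0}{2}$ potential edges are missing and $f_1=\binom{f_0}{2}-N$.

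Assembling: $a_2=\binom{d+1}{2}-d\,f_0+f_1=\binom{d+1}{2}-d\,f_0+\binom{f_0}{2}-N$. Writing $f_0=(d+1)+b_1$ and expanding, $\binom{d+1}{2}-d(d{+}1{+}b_1)+\binom{(d{+}1)+b_1}{2}-N$; using $\binom{p+q}{2}=\binom{p}{2}+pq+\binom{q}{2}$ with $p=d+1$, $q=b_1$ this becomes $\binom{d+1}{2}-d(d{+}1)-d\,b_1+\binom{d+1}{2}+(d{+}1)b_1+\binom{b_1}{2}-N = 2\binom{d+1}{2}-(d{+}1)d+b_1+\binom{b_1}{2}-N = b_1+\binom{b_1}{2}-N=\binom{b_1+1}{2}-N$, as claimed.

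The main obstacle is the combinatorial geometry step: proving that a segment between two vertices of $Q_G$ either is a face of every triangulation or is a diagonal of a unique square $2$-face arising from a unique $4$-cycle of $G$. Concretely one must show (i) if $\varepsilon,\varepsilon'$ are not the diagonals of a $4$-cycle then the open segment $\overline{\mathbf v_\varepsilon\mathbf v_{\varepsilon'}}$ lies in the boundary of a common face of $\mathscr T$ — this should follow from a direct application of Lemma \ref{lem:compa}, since a length-$4$ obstruction cycle is forced to be a genuine $4$-cycle of $G$ while longer alternating cycles cannot use only two edge-labels; and (ii) the correspondence between "bad" pairs and $4$-cycles is a bijection, which amounts to the elementary fact that a pair of opposite edges determines the $4$-cycle of $G$ they belong to uniquely (in a bipartite \emph{simple} graph two edges lie in at most one common $4$-cycle). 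Once this dichotomy is in hand the arithmetic above is routine.
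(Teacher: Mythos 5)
Your reduction is exactly the paper's: from \eqref{eq:coefficients} you get $a_2=\binom{d+1}{2}-d\,f_0+f_1$, and with $b_1=f_0-d-1$ everything comes down to showing that in any triangulation $\mathscr T$ exactly $N$ of the $\binom{f_0}{2}$ pairs of vertices of $Q_G$ fail to span a $1$-simplex of $\mathscr T$; your binomial arithmetic at the end is correct. The genuine gap is that the central geometric dichotomy --- every segment joining two vertices of $Q_G$ is either a face (edge) of $Q_G$ itself, hence present in every triangulation, or else a diagonal of a square $2$-face arising from a $4$-cycle, of which each triangulation contains exactly one of the two diagonals --- is only asserted, and the tool you propose for proving it, Lemma \ref{lem:compa}, is not the right one. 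That lemma decides when two \emph{maximal} simplices (spanning trees) meet in a common face; it says nothing about whether a prescribed $1$-simplex spanned by two given vertices occurs as a face of $\mathscr T$, which is why your first attempt with it derailed into ``which is impossible.'' What is needed, and what the paper supplies, is a way to pin down the smallest face of $Q_G$ containing the two vertices: intersect the supporting hyperplanes of Lemma \ref{lem:support} attached to all star-cuts disjoint from $\{\varepsilon,\delta\}$, and observe that the resulting face of $Q_G$ is the convex hull of the vertices corresponding to edges of $G$ spanned by the (at most four) endpoints of $\varepsilon$ and $\delta$ --- that is, $2$, $3$, or $4$ vertices, the last case occurring precisely when $\varepsilon,\delta$ are opposite edges of a (necessarily unique, by simplicity and bipartiteness) $4$-cycle, in which case the face is a square. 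Since $\mathscr T$ induces a triangulation of every face, one diagonal per square face is absent and every other pair of vertices spans an edge of $Q_G$, giving $f_1=\binom{f_0}{2}-N$.

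A smaller slip worth fixing: as literally stated, your ``iff'' ($\sigma$ is not a face of $\mathscr T$ iff $\varepsilon,\varepsilon'$ are opposite edges of a $4$-cycle) would count $2N$ missing pairs, since each $4$-cycle yields two opposite-edge pairs and hence two diagonals of its square face; the correct statement, which your parenthetical and your final count of $N$ implicitly use, is that being such a pair is only \emph{necessary} for being missing, and that exactly one of the two diagonals of each square face fails to lie in $\mathscr T$.
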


\begin{proof}
Let the color classes of $G$ be $E$ and $V$. The formula \eqref{eq:coefficients} gives $a_2={d+1 \choose 2}-df_0+f_1$, where $d=|E|+|V|-2$ is the dimension of the root polytope $Q_G$, $f_0$ is the number of edges in $G$, and $f_1$ is the number of $1$-dimensional simplices in an arbitrary triangulation of $Q_G$. Since $b_1=f_0-d-1$, it suffices to show that $N={f_0 \choose 2}-f_1$. In other words, we wish to prove that in any triangulation $\mathscr T$ of $Q_G$, the set of pairs of vertices that are not connected by an edge in $\mathscr T$ is in a one-to-one correspondence with the set of four-cycles of $G$.



Any pair $\varepsilon,\delta$ of edges of $G$ is disjoint from $|E|+|V|-4$ (or $|E|+|V|-3$, if $\varepsilon$ and $\delta$ share an endpoint) star-cuts. The supporting hyperplanes of these cuts (as in Lemma \ref{lem:support}, but this time let us take their intersections with the affine hull of $Q_G$) intersect in an affine subspace\footnote{For some very small graphs we may be talking about an intersection of zero hyperplanes, which we take to be the affine hull of $Q_G$.}, which in turn intersects $Q_G$ in the convex hull of $2$, $3$, or $4$ vertices. Indeed, if $\varepsilon$ and $\delta$ are adjacent then there are no other edges connecting their endpoints. Otherwise, there may be one or two such additional edges, the latter case being when $\varepsilon$ and $\delta$ are opposite edges along a four-cycle. (As $G$ is bipartite and does not have multiple edges, there may be at most one such four-cycle.) Therefore the segment in $Q_G$ determined by $\varepsilon$ and $\delta$ is either an edge of $Q_G$ and hence part of $\mathscr T$, or a diagonal of a quadrilateral (in fact, square) face of $Q_G$. Since $\mathscr T$ induces triangulations of all faces, exactly one of the two diagonals will be a one-simplex in $\mathscr T$. Finally, square faces of $Q_G$ are in a bijection with four-cycles of $G$: we saw how a four-cycle gives rise to a square face and conversely, the four vertices of a square are affinely dependent, i.e., the four corresponding edges form a cycle.
\end{proof}

\begin{ex}
Regarding our two running examples, $K_{2,3}$ has nullity $b_1=2$ with $3$ four-cycles, implying $a_2=0$. The graph of Figure \ref{fig:12a1097} also has $N=3$ but with $b_1=4$, yielding $a_2=7$. These match the results of Examples \ref{ex:k23poly} and \ref{ex:12a1097}.
\end{ex}

Corollary \ref{cor:duality} implies a new formula for $T_G(x,1)$ of an ordinary (not necessarily bipartite) graph $G=(V,E)$. This can be used, for instance, to write the so-called reliability polynomial of an arbitrary connected plane graph as a generating function of activities associated to its regions. To set up the formula, enumerate all possible valence distributions $\mathbf h\colon V\to\N$ of spanning trees in the graph $G'$ that is obtained from $G$ by adding a new vertex in the middle of each edge. (Say that we subtract $1$ from the actual valence at each $v\in V$, although this is not important right now.) All spanning trees of $G'$ are obtained from spanning trees of $G$ by adding one of the two halves for each of the $b_1(G)$ external edges. Despite this multitude of choices, the set $B_{(E,V)}$ of our valence distributions ends up being equinumerous with the set $B_{(V,E)}$ of spanning trees.

\begin{cor}\label{cor:reliable}
Order the set $V$ arbitrarily and compute the internal activity $\iota(\mathbf h)$ for each $\mathbf h\in B_{(E,V)}$ as in Definition \ref{def:activity}. Then we have $T_G(x,1)=\sum_{\mathbf h}x^{\iota(\mathbf h)-1}$.
\end{cor}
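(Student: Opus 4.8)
The plan is to identify the subdivision graph $G'$ as the bipartite graph realizing \emph{both} hypergraphs $(V,E)$ and $(E,V)$, apply the duality of Corollary \ref{cor:duality}, and then unwind the interior polynomial of $(V,E)$ in classical Tutte-polynomial terms.

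First I would note that $G'$ has color classes $V$ (the original vertices) and $E$ (the subdivision points), that each subdivision point has degree $2$ in $G'$ and hence may be identified with the pair of its neighbours, and that under this identification the hypergraph $(V,E)$ induced by $G'$ is precisely $G$, while the hypergraph $(E,V)$ is its transpose $\overline{(V,E)}$. In particular $\bip(E,V)=G'$, so $B_{(E,V)}$ is exactly the set of vectors $\mathbf h\colon V\to\N$ of the form $v\mapsto\deg_\Gamma(v)-1$ for $\Gamma$ a spanning tree of $G'$, i.e.\ the set described in the paragraph preceding the statement. (We may assume $G$ is loopless: a loop produces a double edge in $G'$, but deleting it changes neither $T_G(x,1)$ nor the left-hand side.)

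Next I would invoke the formula from Section \ref{sec:interior} that a graph $\mathscr H$ with Tutte polynomial $T$ has interior polynomial $\xi^{|V(\mathscr H)|-1}T(1/\xi,1)$; applied to $\mathscr H=(V,E)=G$ this gives $I_{(V,E)}(\xi)=\xi^{|V|-1}T_G(1/\xi,1)$. (If a self-contained derivation is preferred: writing a hypertree of $(V,E)$ as $\mathbf i_T$ for a spanning tree $T$ of $G$, one checks from Definition \ref{def:activity} that every edge outside $T$ is internally active, while an edge of $T$ is internally active exactly when it is the smallest edge of its fundamental cut; hence $\iota(\mathbf i_T)=b_1(G)+i(T)$ with $i(T)$ the classical internal activity of $T$, so $\bar\iota(\mathbf i_T)=|E|-b_1(G)-i(T)=|V|-1-i(T)$, which gives the same formula.) Corollary \ref{cor:duality} then yields $I_{(E,V)}=I_{(V,E)}$.

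Finally I would expand the left-hand side directly. Since the hyperedge set of $(E,V)$ is $V$, we have $\bar\iota(\mathbf h)=|V|-\iota(\mathbf h)$, so $I_{(E,V)}(\xi)=\sum_{\mathbf h\in B_{(E,V)}}\xi^{|V|-\iota(\mathbf h)}$. Setting this equal to $\xi^{|V|-1}T_G(1/\xi,1)$, substituting $\xi=1/x$, and multiplying through by $x^{|V|-1}$ produces $\sum_{\mathbf h}x^{\iota(\mathbf h)-1}=T_G(x,1)$; the left side does not depend on the chosen order of $V$ precisely because the interior polynomial is order-independent (Section \ref{sec:interior}). There is no substantive obstacle here: the only care required is the bookkeeping that reconciles the paper's shifted ``internal inactivity'' convention with classical internal activity, and the verification that subdivision genuinely exhibits the pair $(V,E)$, $(E,V)$ of transpose hypergraphs.
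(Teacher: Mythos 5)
Your proposal is correct and follows essentially the argument the paper intends for this corollary (which it leaves implicit): identify the subdivided graph $G'$ as $\bip(V,E)$, use the fact that the interior polynomial of a graph is $\xi^{|V|-1}T(1/\xi,1)$, apply the duality of Corollary \ref{cor:duality} to pass to $(E,V)$, and translate internal inactivity back into activity via $\bar\iota(\mathbf h)=|V|-\iota(\mathbf h)$. The bookkeeping (external edges being internally active, $\bar\iota(\mathbf i_T)=|V|-1-i(T)$, order-independence) is handled correctly, so nothing is missing.
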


In spite of the fact that the case of a plane bipartite graph is often simpler than the general one, due mainly to the presence of the dual graph of the planar embedding which is naturally directed and balanced, the following two corollaries eluded proof until now. (Plane bipartite graphs also form triples called trinities. See the seminal paper \cite{ttt} by Tutte that contains the proof of the so-called Tree Trinity Theorem, or \cite[Sections 8-10]{hypertutte}.) The first one was predicted in \cite[Subsection~10.3]{hypertutte}, where it is also explained why the planar duality formula \cite[Theorem 8.3]{hypertutte} and Corollary \ref{cor:duality} suffice to establish it.

\begin{cor}\label{cor:polynomials}
The interior and exterior polynomials associated to the six hypergraphs induced by the  plane bipartite graphs in the same trinity altogether form a three-element set.
\end{cor}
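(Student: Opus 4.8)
The plan is to combine Corollary~\ref{cor:duality} with the planar duality formula \cite[Theorem~8.3]{hypertutte}, exactly as foreshadowed in \cite[Subsection~10.3]{hypertutte}. First recall the relevant structure. A trinity is a triangulation of the sphere whose vertices are three-colored, say in emerald, violet, and amber; taking the edges between each of the three pairs of color classes yields three plane bipartite graphs, which we denote $G_{\bullet\circ}$, $G_{\circ\diamond}$, $G_{\diamond\bullet}$. Each $G_{xy}$ induces two hypergraphs --- one with its $x$-colored vertices as hyperedges, one with its $y$-colored vertices as hyperedges --- and these two are abstract duals of one another. Thus the six hypergraphs in the statement split into three abstract-dual pairs, one pair for each of $G_{\bullet\circ}$, $G_{\circ\diamond}$, $G_{\diamond\bullet}$.

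By Corollary~\ref{cor:duality}, the two members of each such pair share a common interior polynomial; call these $I_1$, $I_2$, $I_3$. Hence the six interior polynomials take at most three distinct values. It then remains to see that the six exterior polynomials also lie in $\{I_1, I_2, I_3\}$. For this I invoke \cite[Theorem~8.3]{hypertutte}, which for a plane bipartite graph belonging to a trinity identifies the exterior polynomial of a hypergraph induced by one of the three bipartite graphs with the interior polynomial of a hypergraph induced by another of the three (the two hyperedge colors being matched through the planar embedding, i.e.\ through the dual graph). Because a trinity is symmetric under cyclic permutation of its three colors, letting this identification run over all six hypergraphs shows that each of the six exterior polynomials equals one of $I_1$, $I_2$, $I_3$ as well. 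Combining the two observations, all twelve polynomials --- six interior and six exterior --- belong to $\{I_1, I_2, I_3\}$, a set of at most three elements, which is the assertion of the corollary.

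The one delicate point is the bookkeeping in the application of \cite[Theorem~8.3]{hypertutte}: one must track precisely which hypergraph's exterior polynomial gets matched to which hypergraph's interior polynomial, and verify that running over all six hypergraphs really does sweep out all six exterior polynomials rather than only some of them. This is purely a matter of organizing the cyclic symmetry of the trinity correctly; no new combinatorial argument beyond \cite[Theorem~8.3]{hypertutte} and Corollary~\ref{cor:duality} is required, which is exactly why \cite[Subsection~10.3]{hypertutte} could already assert that these two results suffice to establish the statement.
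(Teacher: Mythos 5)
Your proposal is correct and is essentially the paper's own argument: the paper proves this corollary precisely by combining Corollary~\ref{cor:duality} with the planar duality formula \cite[Theorem~8.3]{hypertutte}, exactly as outlined in \cite[Subsection~10.3]{hypertutte}. Note that once each of the six exterior polynomials is identified with the interior polynomial of some hypergraph in the trinity, the conclusion already follows from the fact that all six interior polynomials lie in $\{I_1,I_2,I_3\}$, so the ``bookkeeping'' you flag is not actually needed for the statement as given.
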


Shapiro and the second author defined so-called parking functions for an arbitrary directed graph \cite{ps}. These objects have a natural enumerator $p$ which is a one-variable polynomial with positive integer coefficients. Such a parking function enumerator can in particular be associated to the dual graph of a plane bipartite graph (see, e.g., \cite{homfly} for details). Combining Theorem \ref{thm:main} with \cite[Corollary~1.5]{homfly} yields our last result. In the case when one of the hypergraphs induced by the bipartite graph is in fact a graph, this easily follows from a formula of Merino L\'opez \cite{merino} via the connection noted in \cite{ps} between parking functions and the chip firing game (a.k.a.\ abelian sandpile model). The general case is however new.

\begin{cor}\label{cor:park}
Let $G$ be a connected plane bipartite graph so that the common interior polynomial of its induced hypergraphs is $I$. Then $I=p$, where $p$ is the parking function enumerator associated to the dual graph $G^*$.
\end{cor}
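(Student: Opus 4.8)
The plan is to read this corollary off from two results already in hand: Theorem~\ref{thm:main} and \cite[Corollary~1.5]{homfly}. Recall from Example~\ref{ex:badcase} (and \cite[Theorem~1.1]{homfly}) that for a plane bipartite graph $G$ there is a distinguished \emph{arborescence triangulation} $\mathscr T$ of $Q_G$: after orienting the planar dual $G^*$ so that each edge has an emerald point on its right and fixing a root in the outer region, the maximal simplices of $\mathscr T$ are exactly the spanning trees of $G$ that are planar-dual to the spanning arborescences of $G^*$. What \cite[Corollary~1.5]{homfly} supplies is that $\mathscr T$ is shellable and that, under the bijection (spanning trees of $G$) $\leftrightarrow$ (maximal simplices of $\mathscr T$) $\leftrightarrow$ (spanning arborescences of $G^*$), the shelling numbers $c_i$ of \eqref{eq:ftoh} match the grading statistic that defines the parking function enumerator $p$ of $G^*$ in the sense of \cite{ps}. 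Hence by \eqref{eq:ftoh} the $h$-vector of $\mathscr T$ is a reversal of $p$; by Theorem~\ref{thm:h=E} that $h$-vector is the triangulation-independent $h_G$; and by Theorem~\ref{thm:main} one has $x^{|E|+|V|-1}h_G(x^{-1})=I$. Chaining these three identifications, with the reversals cancelling by construction, yields $I=p$.

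Concretely I would proceed as follows. First, pin down the conventions: the definition of a $G^*$-parking function from \cite{ps}, the normalization of its enumerator $p$ adopted in \cite{homfly} (so that $p(1)$ is the number of maximal simplices of a triangulation of $Q_G$ and $\deg p\le\min\{|E|,|V|\}-1$), and the reversal in \eqref{eq:I=h}. Second, recall the arborescence triangulation $\mathscr T$ and cite \cite[Theorem~1.1]{homfly} for the fact that it really is a triangulation of $Q_G$. Third, invoke \cite[Corollary~1.5]{homfly} to identify, arborescence by arborescence, the shelling statistic $c_i$ with the parking-function level of the corresponding parking function; this is the one step carrying any content, and once the normalizations are aligned it is a translation exercise. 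Fourth, use \eqref{eq:ftoh} and Theorem~\ref{thm:h=E} to conclude that the $h$-vector of $\mathscr T$ is $h_G$ and is the reversal of $p$. Fifth, apply Theorem~\ref{thm:main} to rewrite $x^{|E|+|V|-1}h_G(x^{-1})$ as $I$, obtaining $I=p$.

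As a sanity check I would verify the already-known special case in which one of the hypergraphs induced by $G$ is an ordinary graph, i.e.\ every vertex in the color class $E$ has degree $2$, so $G$ is the once-subdivided plane graph $G_0=(V,E)$. Then $\mathscr H=(V,E)$ is $G_0$ itself, hence $I(x)=x^{|V|-1}T_{G_0}(1/x,1)$, and $G^*$ is built from $G_0^*$. Merino L\'opez's theorem \cite{merino} expresses the level generating function of recurrent sandpile configurations on $G_0^*$ as a specialization of its Tutte polynomial; by the dictionary of \cite{ps} between the sandpile model and parking functions this generating function is exactly $p$ (up to the same reversal), and matroid duality $T_{G_0^*}(x,y)=T_{G_0}(y,x)$ then recovers $p=I$ in this case, consistently with the general statement. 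The genuine obstacle here is not geometric — there is no new argument beyond Theorem~\ref{thm:main} and \cite[Corollary~1.5]{homfly} — but bookkeeping: the three competing normalizations (the reversal $x^{\dim Q_G+1}h_G(x^{-1})$ of Theorem~\ref{thm:main}, the grading of $G^*$-parking functions in \cite{ps}, and the convention used in \cite[Corollary~1.5]{homfly}) must be made to coincide, so the write-up should be devoted mostly to that alignment while the geometry remains a one-line citation.
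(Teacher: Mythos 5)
Your proposal follows exactly the paper's route: the paper derives Corollary \ref{cor:park} by combining Theorem \ref{thm:main} with \cite[Corollary~1.5]{homfly} (the latter relating the parking function enumerator of $G^*$ to the $h$-vector of the arborescence triangulation of $Q_G$), which is precisely the chain you describe, normalization bookkeeping included. Your Merino L\'opez sanity check also matches the remark the paper makes about the special case where one induced hypergraph is an ordinary graph.
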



\begin{thebibliography}{33}

\bibitem{andrews}G. E. Andrews, \textit{Identities in combinatorics, I: On sorting two ordered sets}, Discr.\ Math.\ 11 (1975), 97--106.

\bibitem{crossingless}I. M. Gelfand, M. I. Graev, and A. Postnikov, \textit{Combinatorics of hypergeometric functions associated with positive roots}, in ``Arnold--Gelfand Mathematical Seminars: Geometry and Singularity Theory,'' Birkh\"auser, Boston, 1996, 205--221.


\bibitem{jkr}A. Juh\'asz, T. K\'alm\'an, and J. Rasmussen, \textit{Sutured Floer homology and hypergraphs}, Math.\ Res.\ Lett.\ 19 (2012), no.\ 6, 1309--1328.

\bibitem{hypertutte}T. K\'alm\'an, \textit{A version of Tutte's polynomial for hypergraphs}, Adv.\ Math.\ 244 (2013), no.\ 10, 823--873.

\bibitem{homfly}T. K\'alm\'an and H. Murakami, \textit{Root polytopes, parking functions, and the HOMFLY polynomial}, arXiv:1305.4925. 

\bibitem{merino}C. Merino L\'opez, \textit{Chip firing and the Tutte polynomial}, Ann.\ Comb.\ 1 (1997), no.\ 3, 253--259.

\bibitem{morton}H. R. Morton: \textit{Seifert circles and knot polynomials}, Math.\ Proc.\ Camb.\ Phil.\ Soc.\ 99 (1986), 107--109.

\bibitem{alex}A. Postnikov, \textit{Permutohedra, Associahedra, and Beyond}, Int.\ Math.\ Res.\ Not.\ 2009, no.\ 6, 1026--1106.

\bibitem{ps}A. Postnikov and B. Shapiro, \textit{Trees, parking functions, syzygies, and deformations of monomial ideals}, Trans.\ Amer.\ Math.\ Soc.\ 356 (2004), no.\ 8, 3109--3142.

\bibitem{san}F. Santos, \textit{The Cayley trick and triangulations of products of simplices}, in ``Integer Points in Polyhedra -- Geometry, Number Theory, Algebra, Optimization,'' A. Barvinok, M. Beck, C. Haase, B. Reznick, and V. Welker (eds.), Contemp.\ Math.\ 374, AMS, Providence, 2005.

\bibitem{sch}A. Schrijver, \textit{Combinatorial optimization. Polyhedra and efficiency.} Algorithms and Combinatorics 24, Springer-Verlag, Berlin, 2003.

\bibitem{swartz}E. Swartz, \textit{From polytopes to enumeration}, {\tt http://www.math.cornell.edu/\raisebox{-2pt}{\~{}}ebs/math455.pdf}.

\bibitem{ttt}W. T. Tutte, \textit{The dissection of equilateral triangles into equilateral triangles}, Proc.\ Cambridge Philos.\ Soc.\ 44 (1948), 463--482.

\end{thebibliography}
\end{document}